\newcommand{\gap}{\vspace{0.1in}}
\newcommand{\epc}{\hspace{1pc}}
\newcommand{\wh}{\widehat}
\newcommand{\wt}{\widetilde}
\newcommand{\norm}[1]{\left\| \, #1 \, \right\|}
\newtheorem{theorem}{Theorem}
\newtheorem{proposition}[theorem]{Proposition}
\newtheorem{lemma}[theorem]{Lemma}
\newtheorem{corollary}[theorem]{Corollary}
\newtheorem{definition}[theorem]{Definition}
   	\title{
   	Solving Nonsmooth Nonconvex Compound Stochastic Programs
   	with Applications to Risk Measure Minimization}
   \author{Junyi Liu \footnote{Department of Industrial Engineering, Tsinghua University,
   		Beijing, China. Email: junyiliu@mail.tsinghua.edu.cn.}  \and Ying Cui\footnote{Department of Industrial and Systems Engineering,  University of Minnesota, USA. Email: yingcui@umn.edu} \and Jong-Shi Pang \footnote{Daniel J.\ Epstein Department of Industrial and Systems Engineering, University of Southern California, USA. Email: jongship@usc.edu}}
\begin{document}

	\maketitle
	
	\begin{abstract}
		This paper studies a structured compound stochastic program (SP) involving multiple
		expectations coupled by nonconvex and nonsmooth functions.
		We present a successive convex-programming based sampling algorithm and establish its subsequential convergence.
		We describe stationarity properties of the limit points for several classes of the compound SP.  We further
		discuss probabilistic stopping rules based on the computable error-bound for the algorithm.
		We present several risk measure minimization problems that can be formulated as such a compound stochastic program;
		these include generalized deviation optimization problems based on the optimized certainty
		equivalent and buffered probability of exceedance (bPOE), a
		distributionally robust bPOE optimization problem, and a
		multiclass classification problem employing the cost sensitive error criteria with bPOE risk measure.
	\end{abstract}
\textbf{Key words}: stochastic programming, nonconvex optimization, risk measure optimization

\section{Introduction}
This paper studies the so-called \emph{compound stochastic program} (compound SP) that was originally introduced in  \cite{ermoliev2013sample}.	
Let $\tilde{\xi}: \Omega \to \Xi \subseteq \mathbb{R}^m$ be an $m$-dimensional random variable defined on a probability space
$(\Omega, \mathcal{A}, \mathbb{P})$, where $\Omega$ is the sample
space, $\Xi$ is a measurable subset of $\mathbb{R}^m$, $\mathcal{A}$ is a
$\sigma$-field on $\Omega$, and $\mathbb{P}$ is a probability measure
defined on the pair $(\Omega,\mathcal{A})$. We use $\xi$ without tilde to denote the realization of the random variable $\tilde{\xi}$.
The compound SP considered in this
paper is the following class of optimization problems that involves the nested
expectations of random functions in the objective:
\begin{equation} \label{eq:compound_composite_SP}
\displaystyle{
\operatornamewithlimits{\mbox{minimize}}_{x\in X}
} \;\; \Theta(x) \, \triangleq \, \psi\Big( \, \mathbb{E}\Big[ \,
\varphi\Big( G(x, \tilde{\xi} ), \, \mathbb{E} \, [ \, F(x, \tilde{\xi} ) \, ] \Big) \, \Big] \, \Big) ,
\end{equation}
where $X\subseteq \mathbb{R}^n$ is a closed convex set contained in an open set ${\cal O}$, and
for some nonnegative integers $\ell_G$ and $\ell_F$ with $\ell \triangleq \ell_G + \ell_F > 0$, and a positive integer
$\ell_{\varphi}$, $( G,F ) : {\cal O} \times \Xi \, \subseteq \, \mathbb{R}^{n+m} \to \mathbb{R}^{\ell}$,
$\varphi : \mathbb{R}^{\ell} \to \mathbb{R}^{\ell_{\varphi}}$ and $\psi : \mathbb{R}^{\ell_{\varphi}} \to \mathbb{R}$.
We assume throughout that $F$ and $G$ are Carath{\' e}odory functions;
i.e., $F(\bullet,\xi)$ and $G(\bullet,\xi)$
are continuous for all $\xi \in \Xi$ and $F(x,\bullet)$ and $G(x,\bullet)$ are measurable (in fact, with
$\mathbb{E}\left[ \| \, G(x,\tilde{\xi}) \, \| \right]$ and $\mathbb{E}\left[ \| \, F(x,\tilde{\xi}) \, \| \right]$
both finite in our context) for all $x \in X$, and $\varphi$ and $\psi$ are continuous with more assumptions on
them to be stated later.  The notation $\mathbb{E}$ denotes the expectation operator associated with the random
variable $\tilde{\xi}$.

\gap

Clearly, for the compound SP \eqref{eq:compound_composite_SP}, without the outer function $\psi$, we must have $\ell_{\varphi} = 1$;
moreover in this case, the objective function $\Theta$ can be written alternatively as
$\mathbb{E}  [ \, h (x,\mathbb{E}[ \, F(x, \tilde{\xi} ) \, ],\tilde{\xi} \, ) \,  ]$ for
some scalar-valued function $h : X \times \mathbb{R}^{\ell_F} \times \Xi \to \mathbb{R}$; this is the form of the
 two-layer expectation functions
studied in \cite{ermoliev2013sample}.
Besides the feature of the two-layer expectation, the objective function $\Theta$ in
\eqref{eq:compound_composite_SP} involves an outer (vector)
deterministic function $\psi(\bullet)$ and an additional inner function $G$.
For	mathematical analysis, the compact form
$\mathbb{E}  [ \, h (x,\mathbb{E}[ \, F(x, \tilde{\xi} ) \, ],\tilde{\xi} \, ) \,  ]$ is
convenient and broad; for the algorithmic development and the applications to be discussed later, the form
(\ref{eq:compound_composite_SP}) lends itself to provide a more convenient framework.
When $\ell_G = 0$ (or $\ell_F = 0$), the objective function $\Theta$ essentially becomes
${\varphi} ( \,\mathbb{E} [{F}(x,\tilde{\xi} \, )  ] \,  )$.  This  case extends beyond a single expectation
functional which is the building block of a standard SP. It is worth noting that although both multi-stage SP and compound SP involve  multiple layers of expectations,
the composition of layers of expectations in compound SP  is built upon functions ($\varphi$, $\psi$, $F$ and $G$), 
which distinguishes itself from multi-stage SP where  layers of expectations are composited by the random value functions
of parametric optimization problems.


\gap
	
The compound SP in \eqref{eq:compound_composite_SP} arises frequently in risk
measure minimization problems.  First proposed
in \cite{rockafellar2006generalized}, a generalized deviation measure quantifies
the deviation of a random variable from its expectation by a risk measure.  In this
paper, we focus on two such measures, one is based on the optimized certainty
equivalent (OCE) \cite{ben1986expected,ben2007old} of a random variable that predates and
extends the  conditional value-at-risk (CVaR)
\cite{rockafellar2000optimizationCVAR,rockafellar2002conditional} used extensively in
financial engineering and risk management.  The other type of application we are interested in is based on the reliability measure of a random variable, which is called the buffered probability of exceedance (bPOE).
First employed by Rockafellar and Royset \cite{RockafellarRoysett10} as a structural reliability measure in
the design and optimization of structures, the study of bPOE  has been further explored by Uryasev and his collaborators in a series of
publications \cite{mafusalov2018estimation,mafusalov2018buffered,norton2017error,norton2019maximization}.
In addition, in the area of distributionally robust optimization (DRO), the robustification of  the (conditional) value-at-risk
has been studied and applied extensively to portfolio management and other areas;
see e.g.\ \cite{ChanMahmoudzadehPurdie14,ElGhouiOksOustry03,LotfiZenios18,pflug2012}
for the applications of (C)VaR-based DRO, and \cite{rahimian2019distributionally} for a thorough literature review of DRO.
Motivated by the growing importance of the bPOE and the significant interests and
advances in distributionally robust optimization, we are interested in the study of  a distributionally
robust mixed bPOE optimization problem. It turns out that this problem can be formulated
as a nonconvex compound stochastic program \eqref{eq:compound_composite_SP}
due to the mixture of distributions and the mixture of bPOEs at multiple thresholds.
	
\gap
	
The departure of our study from the sample average approximation (SAA) analysis in \cite{ermoliev2013sample,hu2020sample} is that
we aim to design a practically implementable algorithm for approximating a stationary
solution of \eqref{eq:compound_composite_SP}.  Due to the lack of convexity   in the applications to be presented later, seeking a stationary solution of
a sharp kind is the best we can hope for in practical computations.  This task is
complicated by the coupled nonconvexity and nonsmoothness  that renders
gradient/subgradient based algorithms difficult to be applied.  This feature highlights the
contributions of the present paper and distinguishes it from a series of recent works on the SAA convergence analysis
of the globally optimal solutions and objective values \cite{ermoliev2013sample, dentcheva2017statistical, hu2020sample} and the  development of stochastic gradient
type algorithms \cite{WangFangLiu17,GhadimiRuszczynskiWang18,YangWangFang19} under the differentiability assumption
on the nested SPs,  which can be seen as an extension of \eqref{eq:compound_composite_SP} to multiple layers of expectations.
	
\gap
	
In a nutshell, the major contributions of our work are summarized as follows:
	
\gap
	
1.  We introduce a practically implementable convex-programming based
stochastic majorization minimization (SMM)
algorithm with incremental sampling for solving the nonconvex
nondifferentiable compound SP.  With the
sample size increasing roughly linearly, we establish the subsequential convergence
of the algorithm to a fixed point of the algorithmic map in an almost sure manner.
We relate such a point to several kinds of stationary solutions of the original
problem under different assumptions on the component functions $G$ and $F$ and a
blanket assumption on $\psi$ and $\varphi$
(see Subsection~\ref{subsec:assumptions}).
	
\gap
	
2. We provide a computable probabilistic stopping rule for the proposed SMM algorithm with the aid of
a novel surrogate-function based error-bound theory of   locally Lipchitz
programs.  Such a theory aims to bound the distance of a given iterate to the set of
stationary points in terms of a computable residual.  To the best of
our knowledge, this is the
first attempt to develop such an error bound theory for majorization-minimization
algorithms for nonsmooth nonconvex optimization problems, and in both
deterministic and stochastic settings.
	
\gap
	
3.  We present several applications of the compound SP for risk management based on some
topical risk and reliability measures; these include OCE-based deviation minimization, bPOE-based
deviation minimization, and distributionally robust mixed bPOE optimization.
We also apply the bPOE as the measure to quantify the classification errors
in cost-sensitive multi-class classification problems to illustrate the promise of the compound SP
methodology in statistical learning.

\subsection{Discussion and Related Literature}

At the suggestion of a referee, we feel that it would be useful to place the proposed algorithm in the broader context
of possible approaches for solving the problem (\ref{eq:compound_composite_SP}).
For the problem (\ref{eq:compound_composite_SP}), one may use
the well-known sample-average approximation (SAA) scheme in which such approximations of the expectation
is constructed, leaving the nonconvexity and nondifferentiability in the original problem intact.
One may next design a numerical algorithm to compute stationary solutions of the nonconvex and nonsmooth SAA subproblems
followed by an asymptotic analysis on
such points to a stationary point of the original problem (if possible) when the sample size increases to infinity.
Below we compare this approach with our proposed method.

\gap

First,  the statistical consistency results of the SAA problems in  \cite{ermoliev2013sample,dentcheva2017statistical,hu2020sample} on compound/conditional/nested stochastic programs
pertain to the convergence  of the globally {optimal} solutions and objective values.  In particular, the non-asymptotic convergence properties of the SAA estimator is analyzed in  \cite{ermoliev2013sample}    using the Rademacher average of the underlying random functions. Another work \cite{hu2020sample} focuses on a more general setting, refered as the conditional SP, which has a two-layer nested expectation objective function  with the inner conditional expectation.  It  analyzes the  sample complexity of the SAA under a variety of structural assumptions, such as Lipschitz
continuity and error bound conditions.    However, with the presence of  nonconvexity and nonsmoothness in
problem \eqref{eq:compound_composite_SP}, one may not be able to  compute such optimal solutions and values. Instead, one may be interested in the statistical consistency of computable stationary points.  When the functions $\psi$ and $\varphi$
in \eqref{eq:compound_composite_SP} are absent, the consistency of the (Clarke) stationary points of the SAA is established in
\cite{shapiro2007uniform} under the Clarke regularity of the objective function.  Asymptotic behavior of the directional stationary solutions of
the SAA scheme arising from structured statistical estimation problems is also analyzed in a recent work \cite{qi2019statistical} without the
Clarke regularity of the underlying objective function.  For the compound nonsmooth and nonconvex SP \eqref{eq:compound_composite_SP} that does
not fall into the aforementioned frameworks, an open question on the consistency of stationary points is saved for future study.

\gap

Second,  when the SAA
scheme is applied to nonconvex problems,  it does not provide computational algorithms for solving subproblems, so one need to design a numerical algorithm
for the determinisitic subproblems.   In contrast, the subproblems in the SMM algorithm
are convex; thus the algorithm is readily implementable in practice by
state-of-the-art convex programming methods. Moreover,  it may not be easy to determine the sample size in the SAA problem before executing the algorithm,
so practically one may need
to solve multiple nonconvex and nonsmooth SAA subproblems with independently generated sample sets of large sample sizes.
Instead of adopting the SAA scheme that only approximates the expectation, we simultaneously approximate the expectation
by incremental sample averaging and  convex surrogation;
by doing this, we may obtain satisfactory descent progress in the early iterations with relatively small sample sizes.

\gap

In fact, sequential sampling  has a long history in stochastic programming. Methods based on this technique include  the stochastic decomposition algorithm
\cite{HigleSen91} pertaining to two-stage stochastic linear programs, and the work of Homem-de-Mello \cite{Homem-de-Mello03} that examines consistency under
sample-augmentation without focusing on computations.  A key finding of the latter paper is that the sample size must grow sufficiently fast,
which is consistent to the proposed SMM algorithm in our more general setting.  A more recent work on sample augmentation is  \cite{RoysetPolak07}
by Royset and Polak  which increases the sample size based on progress of the algorithm and shows the consistency to stationary points.
This is aligned with the larger literature on trust region methods  with random models  \cite{ChenMenickellyScheinber18}; namely, if the progress of
an algorithm is less than the ``noise'' of sampling then one has to reduce the noise by increasing
the sample size.
Most recently, the work \cite{an2019stochastic} establishes the convergence of a stochastic difference-of-convex algorithm with the increase
of a single sample at each iteration. As we will see later, the problem (\ref{eq:compound_composite_SP}) does not have this luxury due to the compound nature
of   the objective function. One may also note the affine models used in the recent paper \cite{DuchiRuan18}, which in spirit are similar to the ones in our
work but for a much simpler setting.

\gap

Finally, for the problem (\ref{eq:compound_composite_SP}) that lacks convexity and differentiability, any attempt to directly apply a popular stochastic
approximation method in which a (sub)gradient is computed at each iteration will at best result in a heuristic method.  Since we are interested in proposing
a computational method that has a rigorous supporting theory and direct tie to practical implementation, we choose not to consider a method that requires
the computation of a ``gradient'' of a composite nondifferentiable function. While there is an abundance of advanced subderivatives of nonsmooth functions, as
exemplified by the advances in nonsmooth analysis \cite{rockafellar2009variational}, it might be too early to attempt to apply any such advanced object
to our problem on hand.

\subsection{Notation and organization}

We explain some notations that are used in the present paper. Let
$\mathbb{B}(x;\delta)$ denote the Euclidean ball
centered at a vector $x \in \mathbb{R}^n$ with a positive radius $\delta$; in particular,
we let $\mathbb{B}$ denote the unit ball in $\mathbb{R}^n$.
For a vector-valued function $\phi : \mathbb{R}^k \to \mathbb{R}^{\ell}$, we use
$\phi_i$ for $i=1, \cdots, \ell$ to denote each scalar-valued component function of
$\phi$; thus, $\phi_i : \mathbb{R}^k \to \mathbb{R}$.  We say that a function
$\phi : \mathbb{R}^k \to \mathbb{R}^{\ell}$ is \emph{isotone} if
$u_i \leq v_i$ for all $i = 1, \cdots, k$ implies
$\phi_j(u) \leq \phi_j(v)$ for all $j = 1, \cdots, \ell$.  A real-valued function
$\psi : \mathbb{R}^n \to \mathbb{R}$ is directionally differentiable at a vector
$x$ if the limit
\[
\psi^{\, \prime}(x;dx) \, \triangleq \, \displaystyle{
	\lim_{\tau \downarrow 0}
} \, \displaystyle{
	\frac{\psi(x + \tau \, dx) - \psi(x)}{\tau},
}
\]
called the directional derivative of $\psi$ at $x$ along the direction $dx$,
exists for all $dx \in \mathbb{R}^n$.  If $\psi$ is locally Lipschitz continuous
near $x$, then the Clarke directional derivative
\[
\psi^{\, \circ}(x;dx) \, \triangleq \, \displaystyle{
	\limsup_{z \to x; \, \tau \downarrow 0}
} \, \displaystyle{
	\frac{\psi(z + \tau \, dx) - \psi(z)}{\tau}
}
\]
is well defined for all $dx \in \mathbb{R}^n$.  The Clarke subdifferential of $\psi$ at
$x$ is the compact convex set
\[
\partial_C \,  \psi(x) \, \triangleq \, \left\{ \, v \, \in \, \mathbb{R}^n \, \mid \,
\psi^{\, \circ}(x;dx) \, \geq \, v^{\top}dx, \;\, \forall \, dx \, \in \, \mathbb{R}^n \,
\right\}.
\]
The function $\psi$ is \emph{Clarke regular} at $x$ if it is directionally differentiable
at $x$ and $\psi^{\, \prime}(x;dx) = \psi^{\, \circ}(x;dx)$ for all
$dx \in \mathbb{R}^n$. A real-valued convex function $\psi$ is always Clarke regular and its subdifferential is denoted by $\partial \, \psi(x)$ .
For a function $V(x,y)$ of two arguments such that
$V(\bullet,y)$ is convex, we write $\partial_x V(x,y)$ for the subdifferential
of the latter convex function $ V(\bullet, y)$   at $x$.
We refer to \cite{Clarke83} for basic results in nonsmooth analysis, particularly those
pertaining to the Clarke subdifferential.  Subsequently, we will freely use such results
when they arise.  The set of minimizers of a real-valued function $\psi$ on a set $X$ is
denoted $\displaystyle{
\operatornamewithlimits{\mbox{argmin}}
} \,\{ \psi(x): {x \in X}\}$.  When this set is a singleton with the single element $\bar{x}$; we will
write $\bar{x} = \displaystyle{
	\operatornamewithlimits{\mbox{argmin}}
} \, \{ \psi(x): {x \in X}\}$.  A major assumption in the present paper has to do with \emph{random sets} and \emph{integrable selections}; see e.g.\
\cite{artstein1981law} and \cite[Chapter~14]{rockafellar2009variational}
for properties that we will freely use below.  In particular, with the  probability space $(\Omega, \mathcal{A}, \mathbb{P})$, we say
a random set $\Phi$ in $\mathbb{R}^{\ell}$ is a
measurable set-valued map if it assigns to each element $\omega \in \Omega$
a nonempty closed set $\Phi(\omega) \subseteq \mathbb{R}^{\ell}$. We say that this random set $\Phi$ admits a $\mathbb{P}$-\emph{integrable selection}
if there exists a measurable function $\phi$ such that $\phi(\omega) \in \Phi(\omega)$ for almost all $\omega \in \Omega$, and
$\int_{\Omega} \norm{\phi(\omega)} \, d \,  \mathbb{P}(\omega)$ is finite. The expectation of a random set $\Phi$ with integrable
selections is the set of integrals:
\[
\int_{\Omega} \, \Phi(\omega) \,d \, \mathbb{P}(\omega)  \, \triangleq \,
\left\{ \, \int_{\Omega} \, \phi(\omega) \,d \, \mathbb{P}(\omega)  \, \Big| \, \, \phi \mbox{ is a
	$\mathbb{P}$-integrable selection of $\Phi$} \, \right\}.
\]
The rest of the paper is organized as follows. In Section~\ref{sec: algorithm}, we
present the promised stochastic majorization-minimization (SMM) algorithm for solving the
compound SP \eqref{eq:compound_composite_SP}.  The subsequential convergence of the
proposed algorithm is established in Section~\ref{sec: convergence}.
Following that, in Section~\ref{sec:stationarity} we discuss the stationary properties of the
accumulation points generated by the algorithm
for some special classes of   surrogate functions used in the SMM algorithm.  In
Section~\ref{sec:error_bounds}, we develop an error bound theory for deterministic,
nonconvex and nonsmooth programs based on convex surrogate functions.  This theory
allows us to design a  probabilistic stopping rule for the SMM algorithm.
Applications of the compound SP to the risk measure minimization problems mentioned
above are discussed in Section~\ref{sec:application}.  Finally, Section~\ref{sec:numerics}
provides computational results on a set of preliminary experiments that
lend support to the practical potential of the developed SMM algorithm.

\section{The Stochastic Majorization-Minimization Algorithm} \label{sec: algorithm}
	
In this section, we present the promised stochastic majorization-minimization  algorithm
for solving the compound SP \eqref{eq:compound_composite_SP}.  The algorithm
consists of solving a sequence of  convex subproblems.  The construction
of each such subproblem starts with the use of convex majorants of the component random
functions $\{G_i\}$ and $\{F_j\}$; these surrogate functions are then embedded in the two
functions $\psi$ and $\varphi$ along with
approximations of the double expectations by sample averages.  These two approximations
define an overall sampling-based convex surrogation of the objective function
$\Theta$.  The minimization of the resulting convexified function plus a proximal term completes a general
iterative step of the algorithm.  This approach of sequential sampling
has the advantage that it can respond to data arriving incrementally and also it can potentially achieve
much progress towards solutions in early iterations with small sample sizes.
	
\subsection{Assumptions} \label{subsec:assumptions}
	
We impose the following blanket assumptions on the problem (\ref{eq:compound_composite_SP}) as the basic requirements to define the
SMM algorithm: 
	
\gap
	
\textbf{(A1)} the feasible set $X$ is nonempty, convex, and compact; {thus, it has a
finite diameter $D$};
	
\gap
	
\textbf{(A2)} $\psi$ and $\varphi$ are isotone and Lipschitz continuous
functions with Lipschitz constants $\mbox{Lip}_{\psi}$ and $\mbox{Lip}_{\varphi}$,
respectively; moreover, $\psi$ and each $\varphi_j$
(for $j = 1, \cdots, \ell_{\varphi}$) are convex;
	
\gap
	
\textbf{(A3$_G$)} for every 
pair $( \xi,x^{\, \prime} ) \in \Xi \times X$,
there exists a family ${\cal G}(x^{\, \prime}, \xi)$ consisting of functions
$\wh{G}(\bullet,\xi;x^{\, \prime}) : {\cal O} \to \mathbb{R}^{\ell_G}$
satisfying the following conditions (1)--(4):
	
\gap
	
(1) \emph{a touching condition}: $\wh{G}(x^{\, \prime},\xi;x^{\, \prime}) = G(x^{\, \prime}, \xi)$;
	
\gap
	
(2) \emph{majorization}: $\wh{G}(x,\xi;x^{\, \prime}) \geq G(x,\xi)$ for any $x \in X$;

\gap
	
(3) \emph{convexity}: each $\wh{G}_i(\bullet,\xi;x^{\, \prime})$ for
$i = 1, \cdots, \ell_G$ is a convex function on $X$;
	
\gap
	
(4) \emph{uniform outer semicontinuity}: the set-valued function ${\cal G}(\bullet, \xi)$
which maps from $X$ to a family of functions 
satisfies the condition that there exists an 
integrable function
$L : \Xi\subseteq \mathbb{R}^m \to \mathbb{R}_+$ such that for any
sequence $\{ x^{\, \nu} \} \subset X$ converging to
$x^{\, \infty}$ and any 
sequence of functions
$ \{ \, \wh{G} (\bullet,\xi;x^{\, \nu}) \,  \}$ with
$\wh{G} (\bullet,\xi;x^{\, \nu}) \in {\cal G}(x^{\, \nu}, \xi)$
for all $\nu$ {and $\displaystyle{
\sup_{\nu}
} \, \displaystyle{
\sup_{x \in X}
} \, \| \, \wh{G} (x,\xi;x^{\, \nu}) \, \| < \infty$}, there exist a function
$\wh{G}(\bullet,\xi;x^{\, \infty})$ in the family ${\cal G}(x^{\, \infty}, \xi)$
and an infinite subset $\mathcal{K} \subseteq \{ 1, 2, \cdots \}$ such that for all $x \in X$ and any
$\varepsilon > 0$,  an integer $\bar{\nu}(x,\varepsilon)$ exists satisfying
\begin{equation} \label{eq:uniform continuity}
\| \, \wh{G} (x,\xi;x^{\, \nu}) - \wh{G} (x,\xi;x^{\, \infty}) \, \|
\, \leq \, \varepsilon \, L(\xi), \epc \mbox{for all $\nu \, (\in \mathcal{K}) \, \geq \,
\bar{\nu}(x,\varepsilon)$,  for  all $\xi \in \Xi$};
\end{equation}

	
\textbf{(A3$_F$)} this is the counterpart of (A3$_G$) for the function $F$; details are not repeated.  	 		
	
\gap
Any function $\wh{G}(\bullet,\xi;x^{\, \prime}) \in {\cal G}(x^{\, \prime}, \xi)$ satisfying (1)
and (2) in (A3$_G$), is called a \emph{(majorizing) surrogate} of $G(\bullet,\xi)$
at $x^{\, \prime}$; similarly for
the family $\mathcal{F}(x^{\, \prime}, \xi)$.  Condition (3) in (A3$_G$) enables the
implementation of convex programming solvers for solving the sequence of convex
subproblems in the algorithm. However, this convexity assumption can be relaxed; see the remark
following the  proof of Theorem \ref{thm:convergence_smm} and the subsequent
Corollary \ref{co:convergence without convexity}. 
The uniform outer semicontinuity assumption on the family of surrogate functions is critical
in the convergence analysis of the developed algorithm.

\gap

Isotonicity in assumption (A2) is the multivariate   extension of   the nondecreasing property for  univariate functions.
The isotonicity-plus-convexity condition  is generalized from the basic pointwise maximum function and sum function; specifically, the two multivariate functions $
( t_1, \cdots, t_k ) \, \mapsto \, \displaystyle{ \max\{ t_i : 1 \leq i \leq k\}}$ and $ ( t_1, \cdots, t_k ) \, \mapsto \,  { \sum_{i=1}^k} \ t_i, $
are both isotone and convex;
{suitable compositions of these basic functions with convex functions will remain isotone and convex. For instance, the OCE-of-deviation optimization is a class of compound SP with such isotone and convex functions $\psi$ and $\varphi$. Specifically, given a proper closed concave and nondecreasing utility function $u: \mathbb{R} \to [-\infty, \infty)$ with $u(0)=0$ and $1 \in \partial u(0)$,
the OCE of a random variable $Z$ is defined as
$S_u(Z) \triangleq \displaystyle{
\sup
} \, \left\{ \eta +	\mathbb{E}[ u (Z-\eta)]: {\eta \in \mathbb{R}} \right\}$.  With the loss function $f(x,\xi)$,
a random variable $\tilde{\xi}$, and a compact convex
subset $X$ of $\mathbb{R}^n$, the OCE-of-deviation (from the mean) optimization problem is:
\begin{equation} \label{eq:OCE-deviation objective}
 \underset{{x\in X, \eta \in \mathbb{R}}}{\mbox{minimize}} \,\, \Big\{ \, -\eta -
	\mathbb{E}\Big[ \, u\Big( \, f(x,\tilde{\xi} ) - \mathbb{E}\left[ \,
	f(x,\tilde{\xi} ) + \eta \, \right] \, \Big)
	\, \Big] \, \Big\}.
\end{equation}
Under appropriate assumptions (see Subsection~\ref{sec:OCE-of-deviation optimization}), the domain of $\eta$ can be restricted to a closed interval.
The objective in (\ref{eq:OCE-deviation objective}) is of the form (\ref{eq:compound_composite_SP})
with $\psi$ being the identity function, and
\begin{equation} \label{eq:OCEFandG}
F(x,\eta, \xi) \, = \, f(x, \xi) + \eta, \,\, 
G(x,\eta, \xi) \, = \, \left( 
-f(x, \xi), \,
-\eta  \right)^\top \ \mbox{ and } \
\varphi(y_1,y_2, y_3) \, \triangleq \, - u(-y_1 - y_3) + y_2.
\end{equation}
As the composition of the bi-variate convex function
$(t_1,t_2) \mapsto -u(t_1) + t_2$ with the (vector) tri-variate linear function $(t_1,t_2,t_3) \mapsto (-t_1-t_3, t_2)$, the
function $\varphi$ is clearly convex and isotone. 
More examples of compound SPs with isotone and convex functions $\psi$ and $\varphi$, and a non-identity function $\psi$ are presented
in Section~\ref{sec:application}. }

\gap

The two properties---isotonicity and convexity--- of $\psi$ and $\varphi$ play a major role in the algorithmic development
for solving the compound SP; specifically, they enable the  construction of an
upper surrogation function
\[
\psi\left( \, \mathbb{E}\left[ \,
\varphi\left( \wh{G}(x, \tilde{\xi} \, ), \, \mathbb{E} \, [ \, \wh{F}(x, \tilde{\xi} \, ) \, ] \right) \, \right] \, \right)
\]
of the overall objective function $\Theta$ given the upper surrogates $\wh{F}$ and $\wh{G}$ of the inner functions (cf.  Assumption (A3)).
In particular, when the outer functions $\psi$ and $\varphi$ are univariate convex functions without being isotone, we can decompose each into the sum of a
nonincreasing convex function and a nondecreasing convex function, and develop an iterative convex-programming based algorithm by using
both the convex majorizing functions and the concave minorizing functions of $G$ and $F$; such a technique for deterministic composite optimization
problems has been analyzed in \cite{cui2018composite}.
Chapter~7 in the forthcoming monograph \cite{cui2020non} discusses in great detail and generality about the essential role of surrogation
and isotonicity in nonconvex and nonsmooth optimization.   Since our interest in this paper stems from the applications which
have multivariate outer functions $\psi$ and $\varphi$, 
our goal is to present a unified class of problems with properties such as isotonicity that encompass the problem classes therein;
this goal leads to the study of problem (\ref{eq:compound_composite_SP}) under the above setting.
	
\gap
	
The conditions (A3$_G$) and (A3$_F$) can be satisfied by the class of
difference-of-convex (dc) functions which appear in the risk-based optimization
problems presented in Section~\ref{sec:application}.   The fundamental roles of the
class of dc functions in optimization and statistics are well documented in the paper
\cite{MaherPangRaza2018}.  Suppose that for each $i = 1, \cdots, \ell_G$,
$G_i(\bullet,\xi)$ is such a function on $X$ with the dc decomposition
$G_i(x,\xi) = g_i(x,\xi) - h_i(x,\xi)$ that satisfies the following assumptions:
	
	\gap
	
{\bf (DC1)} $\{g_i(\bullet,\xi)\}_{\xi \in \Xi}$ and $\{h_i(\bullet,\xi)\}_{\xi \in \Xi}$ are convex functions and  uniformly
continuous on  $\Xi$
with finite expectation functions $\mathbb{E} [ h_i (x, \tilde{\xi})] $ and $\mathbb{E} [ g_i (x, \tilde{\xi})] $.
	\gap
	
	Under the above assumption on the pair $(g_i,h_i)$,
	for any given $x^{\, \prime} \in X$, we can construct the family
	$\mathcal{G}(x^{\, \prime}, \xi)$ as follows:
	\begin{equation} \label{eq:subgradient linearization}
	{
		\mathcal{G}(x^{\, \prime}, \xi) =  \left\{ \begin{array}{l}
		\wh{G}(\bullet,\xi;x^{\, \prime}) \,
		\triangleq \, \left( \, \wh{G}_i(\bullet, \xi;x^{\, \prime}) \, \right)_{i=1}^{\ell_G}
		\, :\\[0.15in]
		\wh{G}_i(x, \xi;x^{\, \prime})  = g_i(x,\xi) - h_i(x^{\, \prime},\xi) -
		a^{i}(x^{\, \prime},\xi)^{\top} ( x - x^{\, \prime}),  \,  \mbox{for all } \, x \in X, \\[0.1in]
		\mbox{with } a^{i}(x^{\prime},\xi) \in \partial_x h_i(x^{\, \prime},\xi)
 \mbox{ for all $i =1, \cdots, \ell_G$} 	\end{array} \right\}.
	}
	\end{equation}
In order to satisfy the uniform
	outer semicontinuity condition (4) in (A3$_G$) for the family $\mathcal{G}(\bullet, \xi)$, we
	impose an additional assumption on the functions
	$\{ h_i \}_{i = 1}^{\ell_G}$.
	
	\gap
	
	\textbf{(H)}  for each $i=1, \ldots, \ell_G$, the subdifferential of $h_i(\bullet,\xi)$
	is uniformly outer semicontinuous on $X$.
	
	\gap
	
We remark that condition (H) is satisfied when $h_i(\bullet,\xi)$ is differentiable and its
gradient $\nabla_x h_i(\bullet,\xi)$ is continuous uniformly in $\xi$.  For a general extended-valued
convex function, its subdifferential may not be outer semicontinuous.  But for convex functions with finite values,
their subdifferentials are known to be outer semicontinuous.   By
assuming further that its subdifferential is outer semicontinuous uniformly with
respect to $\xi \in \Xi$, the condition (H) holds. Hence,
it is  easy to verify that the family ${\cal G}(\bullet, \xi)$ under the conditions (DC1) and (H) satisfies the four
conditions in (A3$_G$), which is summarized in the proposition below.
A similar result can be stated for  $\{ F_j \}$ which we omit.

\begin{proposition}
Suppose that for each $i = 1, \cdots, \ell_G$,
$G_i(\bullet,\xi)$ is a function on $X$ with the dc decomposition
$G_i(x,\xi) = g_i(x,\xi) - h_i(x,\xi)$ that satisfies (DC1) and (H).
Then the family $\mathcal{G}(\bullet, \xi)$ constructed in \eqref{eq:subgradient linearization} satisfies the
four conditions in (A3$_G$).  \hfill $\Box$
\end{proposition}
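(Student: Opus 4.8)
The plan is to verify the four requirements of (A3$_G$) in turn for the family $\mathcal{G}(\bullet, \xi)$ defined in \eqref{eq:subgradient linearization}, reserving essentially all of the work for condition (4). Conditions (1)--(3) I expect to be immediate. For the touching condition (1), substituting $x = x^{\, \prime}$ into the defining formula collapses the linear term and leaves $g_i(x^{\, \prime}, \xi) - h_i(x^{\, \prime}, \xi) = G_i(x^{\, \prime}, \xi)$. For majorization (2), the convexity of $h_i(\bullet, \xi)$ from (DC1) gives the subgradient inequality $h_i(x, \xi) \geq h_i(x^{\, \prime}, \xi) + a^i(x^{\, \prime}, \xi)^\top (x - x^{\, \prime})$ for $a^i(x^{\, \prime}, \xi) \in \partial_x h_i(x^{\, \prime}, \xi)$, and substituting this lower estimate into the definition yields $\wh{G}_i(x, \xi; x^{\, \prime}) \geq g_i(x, \xi) - h_i(x, \xi) = G_i(x, \xi)$. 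Convexity (3) holds because, for fixed $(\xi, x^{\, \prime})$, each $\wh{G}_i(\bullet, \xi; x^{\, \prime})$ is the convex function $g_i(\bullet, \xi)$ minus an affine function of $x$.

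The real content is condition (4). Given $\{x^{\, \nu}\} \subset X$ with $x^{\, \nu} \to x^{\, \infty}$ and surrogates $\wh{G}(\bullet, \xi; x^{\, \nu}) \in \mathcal{G}(x^{\, \nu}, \xi)$, each surrogate is encoded by a choice of subgradients $a^i(x^{\, \nu}, \xi) \in \partial_x h_i(x^{\, \nu}, \xi)$. First I would establish that these subgradients are bounded: since $g_i$ and $h_i$ are continuous on the compact $X$ and $\sup_\nu \sup_{x \in X} \|\wh{G}(x, \xi; x^{\, \nu})\| < \infty$ by hypothesis, solving the defining identity for the linear term shows $x \mapsto a^i(x^{\, \nu}, \xi)^\top (x - x^{\, \nu})$ is bounded over $X$ uniformly in $\nu$, which bounds $\|a^i(x^{\, \nu}, \xi)\|$ (equivalently, one invokes the local boundedness of the subdifferential of the finite convex function $h_i(\bullet, \xi)$ on the compact $X$). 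Next I would invoke condition (H): the uniform outer semicontinuity of $\partial_x h_i(\bullet, \xi)$ furnishes a single infinite index set $\mathcal{K}$ and, for each $\xi$, a limit $a^i_\infty(\xi) \in \partial_x h_i(x^{\, \infty}, \xi)$ such that $a^i(x^{\, \nu}, \xi) \to a^i_\infty(\xi)$ along $\mathcal{K}$, with the convergence controlled uniformly in $\xi$ by an integrable modulus. Using these limit subgradients I define the candidate limiting surrogate $\wh{G}_i(x, \xi; x^{\, \infty}) = g_i(x, \xi) - h_i(x^{\, \infty}, \xi) - a^i_\infty(\xi)^\top (x - x^{\, \infty})$, which lies in $\mathcal{G}(x^{\, \infty}, \xi)$ by construction.

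The remaining task is the estimate. Subtracting the two surrogates and adding and subtracting $a^i_\infty(\xi)^\top (x - x^{\, \nu})$ splits the difference into three pieces:
\[
\wh{G}_i(x, \xi; x^{\, \nu}) - \wh{G}_i(x, \xi; x^{\, \infty}) = -\bigl[h_i(x^{\, \nu}, \xi) - h_i(x^{\, \infty}, \xi)\bigr] - \bigl[a^i(x^{\, \nu}, \xi) - a^i_\infty(\xi)\bigr]^\top (x - x^{\, \nu}) + a^i_\infty(\xi)^\top (x^{\, \nu} - x^{\, \infty}).
\]
The first piece is handled by the uniform continuity of $h_i$ in (DC1); the second by the uniform-in-$\xi$ subgradient convergence from (H) together with $\|x - x^{\, \nu}\| \leq D$; and the third by $\|a^i_\infty(\xi)\| \, \|x^{\, \nu} - x^{\, \infty}\|$, where $\|a^i_\infty(\xi)\|$ is integrable since it is dominated by the (integrable) Lipschitz modulus of $h_i(\bullet, \xi)$ on $X$. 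Collecting these bounds produces an integrable $L(\xi)$ and, for each fixed $x$ and $\varepsilon$, a threshold $\bar{\nu}(x, \varepsilon)$ beyond which \eqref{eq:uniform continuity} holds for all $\xi$.

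I expect the main obstacle to be precisely the coupling in the second piece: the estimate must hold for every $x \in X$ and uniformly over all $\xi \in \Xi$, yet with a single subsequence $\mathcal{K}$ chosen before $\xi$ is quantified. This is exactly the role of the uniform (in $\xi$) outer semicontinuity in (H)---it is what permits a common $\mathcal{K}$ and an integrable control $L$ to be extracted simultaneously for all $\xi$, rather than a $\xi$-dependent subsequence that would be useless for the subsequent expectation-level analysis. Assembling $L$ from the three moduli (the modulus of continuity of $h_i$, the uniform subgradient-convergence modulus, and the Lipschitz modulus bounding $\|a^i_\infty(\xi)\|$) and checking that each is integrable against $\mathbb{P}$ completes the verification; the analogous argument for the functions $\{F_j\}$ is identical and is omitted.
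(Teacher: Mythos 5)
The paper offers no written proof of this proposition --- it is stated with a box immediately after the preceding remark that the four conditions are ``easy to verify'' under (DC1) and (H) --- and your proposal supplies exactly the argument that discussion points to: conditions (1)--(3) follow at once from the subgradient inequality and the ``convex minus affine'' structure of \eqref{eq:subgradient linearization}, and condition (4) is handled by the three-term decomposition controlled respectively by the uniform continuity in (DC1), the uniform outer semicontinuity of $\partial_x h_i(\bullet,\xi)$ in (H), and the boundedness of the limiting subgradients. Your verification is correct and in fact more explicit than the paper's; the only soft spot is that the integrability of $\|a^i_\infty(\xi)\|$ (needed for the integrable majorant $L$) is asserted via a Lipschitz-modulus domination rather than derived from (DC1), but this is a standard convex-analysis fact that the paper itself leaves implicit.
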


\subsection{The SMM algorithm}
	
	Under the pair of assumptions (A3$_G$) and (A3$_F$), we construct below a family of
	convex majorants
	$\wh{\cal V}(x^{\, \prime}) \triangleq
	\left\{ \, \wh{V}(\bullet;x^{\, \prime}) \, \right\}$
	of the objective function $\Theta(x)$ where each function
	$\wh{V}(\bullet;x^{\, \prime}) : {\cal O} \to \mathbb{R}$ is determined by functions $\wh{G}(\bullet,\xi;x^{\, \prime})$ and
	$\wh{F}(\bullet,\xi;x^{\, \prime})$ from the families ${\cal G}(x^{\, \prime}, \xi)$ and
	${\cal F}(x^{\, \prime}, \xi)$ respectively;  for every $ \xi \in \Xi $:
	\begin{equation} \label{eq:Theta surrogate}
	\wh{V}(x;x^{\, \prime}) \, \triangleq \, \psi\Big( \, \mathbb{E} \Big[ \,
	{\varphi}\left( \, \wh{G}(x,\tilde{\xi}; x^{\, \prime}),
	\mathbb{E}[\wh{F}(x,\tilde{\xi}; x^{\, \prime})]
	\,\right) \Big]  \Big).
	\end{equation}
    The function $\wh{V}$ is not always well defined unless $\wh{G}(x,\tilde{\xi};x^{\, \prime})$ and
	$\wh{F}(x,\tilde{\xi};x^{\, \prime})$ are integrable functions with respect to the probability measure $\mathbb{P}$
    for given $(x,x^{\, \prime})$.
	It turns out that such integrability condition is not needed in the algorithm,
	since we discretize the expectations by sampling
	and combine the iterative sampling with the iterative majorized minimization  in the SMM algorithm.
	
	\gap
	
	Specifically, at iteration $\nu$, we approximate the compound expectations by sample
	averages using two independent sets of i.i.d. samples of size $N_\nu$, each of which
	is composed of
	past samples of size $N_{\nu-1}$ and a new  sample set of size
	$\Delta_\nu$, i.e., $N_\nu = N_{\nu-1} + \Delta_\nu$.  Writing
	$[N] \triangleq \{ 1, \cdots, N \}$ for any positive integer $N$, we let
	$\{\xi^{\, t}\}_{t \in [N_{\nu-1}]}$ and $\{\eta^{\, s}\}_{s \in [N_{\nu-1}]}$
	denote the sample sets utilized at iteration $\nu-1$, and let
	$\{ \xi^{N_{\nu-1}+t}\}_{t \in [\Delta_{\nu}]}$ and
	$\{\eta^{N_{\nu-1}+s}\}_{s \in [\Delta_{\nu}]}$ denote the independent sample
	sets generated at iteration $\nu$; then the sample sets utilized at iteration $\nu$ are
	given by
	\begin{equation} \label{eq:samples}
	 \xi^{\, t} \, \}_{t\in [N_{\nu}]} \, \triangleq \,
	\{ \, \xi^{\, t} \, \}_{t\in [N_{\nu-1}]} \, \cup \,
	\{\xi^{N_{\nu-1}+t}\}_{t \in [\Delta_{\nu}]};  \quad	\{ \, \eta^{\, s} \, \}_{s\in [N_{\nu}]} \, \triangleq \,
  \{\, \eta^{\, s} \, \}_{s\in [N_{\nu-1}]} \, \cup \,
	\{ \, \eta^{N_{\nu-1}+s} \, \}_{s\in [\Delta_{\nu}]} .
\end{equation}
	Given the last iterate $x^\nu$ along with the families $\{\mathcal{G}(x^{\nu}, \xi^t)\}_{t \in [N_\nu]}$ and
	$\{\mathcal{F}(x^\nu, \xi^s)\}_{s \in [N_\nu]}$, the family of discretized convex majorization function is
	as follows:
	\begin{equation} \label{eq:saa_mm_family_approximation}
	\wh{\cal{V}}_{N_\nu}(x^\nu) \, \triangleq \, \left\{ \begin{array}{l}
	\wh{V}_{N_{\nu}}(\bullet\, ; x^{\, \nu}) \, : \mbox{for any } x\in X, \\ [0.1in]
	\wh{V}_{N_{\nu}}(x; x^{\, \nu}) \, = \, \psi\Bigg( \,
	\displaystyle{
		\frac{1}{N_{\nu}}
	} \, \displaystyle{
		\sum_{t=1}^{N_{\nu}}
	} \, \varphi\Bigg( \, \wh{G}^{\, t}(x,\xi^{\, t}; x^{\, \nu}), \, \displaystyle{
		\frac{1}{N_{\nu}}
	} \, \displaystyle{
		\sum_{s=1}^{N_{\nu}}
	} \, \wh{F}^{\, s}(x,\eta^{\, s};x^{\, \nu}) \Bigg) \, \Bigg)  \\ [0.25in]
	\mbox{ with}\; \wh{G}^{\,t}(\bullet, \xi^t \, ; x^\nu) \, \in \, \mathcal{G}(x^\nu, \xi^t) \,
	\mbox{ and } \, \wh{F}^{\, s}(\bullet, \eta^s \, ; x^\nu) \, \in \, \mathcal{F}(x^\nu, \eta^s)
	\end{array} \right\}.
	\end{equation}
	By the convexity of
	$\wh{V}_{N_{\nu}}(\bullet;x^{\, \nu})$, the function $
	\wh{V}_{N_{\nu}}(\bullet;x^{\, \nu}) + \displaystyle{
	\frac{1}{2 \, \rho}
	} \, \| \, \bullet - x^{\, \nu} \, \|_2^2$
	has a unique minimizer on the  convex
	set $X$ for any positive scalar $\rho$.  We are now ready to present the SMM algorithm.
	
	%
	
	\noindent\makebox[\linewidth]{\rule{\textwidth}{1pt}}
	\vspace{-0.1in}
	
	\noindent
	{\bf The Stochastic Majorization-Minimization Algorithm (SMM)}\\
	\noindent\makebox[\linewidth]{\rule{\textwidth}{1pt}}
	
	\begin{algorithmic}[1]
		\STATE \textbf{Initialization:} Let $x^0\in X$ and a positive scalar
		$\rho$ be given.  Set $N_0 = 0$.
		\FOR {$\nu=1,2, \cdots$,}
		\STATE generate two independent sample sets
		$\{ \xi^{N_{\nu-1}+t} \, \}_{t \in [\Delta_{\nu}]}$ and
		$\{ \, \eta^{N_{\nu-1}+s} \, \}_{s \in [\Delta_{\nu}]}$  i.i.d. from the probability
		distribution of the random variable $\tilde{\xi}$ that are also independent from the past
		samples;
		\STATE choose a member $\wh{V}_{N_{\nu}}(\bullet; x^\nu)$ from the family
		$\wh{\cal V}_{N_{\nu}}(x^{\, \nu})$ according to
		\eqref{eq:saa_mm_family_approximation};
		\STATE compute $x^{\nu+1} = \underset{x \, \in \, X}{\mbox{argmin}} \left\{ \,		
		\wh{V}_{N_{\nu}}(x; x^\nu) + \displaystyle{
			\frac{1}{2 \rho}
		} \, \| x -x^{\nu}\|^2 \, \, \right\}$ .
		\ENDFOR
	\end{algorithmic}
	\noindent\makebox[\linewidth]{\rule{\textwidth}{1pt}}

\gap

{We may state an inexact version of the algorithm, wherein each $x^{\nu+1}$ does not need to
be an exact solution of the stated convex subproblem.  Indeed, an error-bound rule that ensures $x^{\nu+1}$ is
$\delta_{\nu}$-suboptimal with the accuracy $\delta_{\nu} > 0$ being suitably restricted would be
sufficient to yield the same convergence result; see the remark following the proof of Theorem~\ref{thm:convergence_smm}.
}

\section{Convergence Analysis of the SMM Algorithm} \label{sec: convergence}
	
	
Our convergence proof of the SMM algorithm below hinges on the construction of  an approximate descent property of
the sequence of SAA objective values with sampling-based error terms, and also on
the finiteness of these accumulated errors by leveraging the non-asymptotic bounds of the SAA estimation errors
for the compound SP with a proper control of the sample size increment. 	
	
\gap

With two independent sample sets $\{\xi^{\, t}\}_{t=1}^n$ and $\{\eta^{\, s}\}_{s=1}^m$, the SAA function of the objective
$\Theta(x)$ is written by
\begin{equation}
\label{eq:saa_obj}
\overline{\Theta}_{n,m}(x) \, \triangleq \, \psi\Bigg( \, \displaystyle{
\frac{1}{n}
} \, \displaystyle{
\sum_{t=1}^{n}
} \, \varphi\Bigg( G(x,\xi^{\, t} \, ), \displaystyle{
\frac{1}{m}
} \, \displaystyle{
\sum_{s=1}^{m}
} \, F(x, \eta^{\, s} ) \, \Bigg) \, \Bigg).
\end{equation}

The convergence rate of SAA estimation error
$ \sup_{x\in X} \, \mathbb{E} \, \big| \, \overline{\Theta}_{n,m}(x) - \Theta(x) \, \big|$
is given in two references \cite{ermoliev2013sample,hu2020sample}; the former assumes
an appropriate bound of the Rademacher average, while the latter assumes the essential boundedness of random functionals.
The following proposition shows that with   a finite-variance condition and Lipschitz continuity, we can derive the uniform bound
$\mbox{O}\left( (\frac{1}{m}+ \frac{1}{n} +\frac{1}{mn})^{1/2} \right)$ for the SAA estimation error, which is better than the bound
$\mbox{O}((\frac{1}{m}+ \frac{1}{n} +\frac{1}{n\sqrt{m}} )^{1/2} )$ in \cite[Theorem 3.1]{hu2020sample} and the bound
$\mbox{O}( \, \frac{1}{\min\{m,n\}^{\beta}}  \, )$ with $\beta \in (0, 1/2)$ in \cite[Theorem 3.5]{ermoliev2013sample}
when $\tilde{\xi}$ is a continuous random variable and $F(x, \tilde{\xi})$ and $G(x, \tilde{\xi})$ are essentially bounded.

	\gap

	We make the following additional assumptions on the functions $G$ and $F$ and use
it to obtain the SAA estimation bound in expectation in Proposition \ref{prop:SAA_error_bound} with the proof given in Appendix.
	\gap
	
	\textbf{(A4)}
	$  \displaystyle{
		\sup_{x  \in X } \, \,
	} \Big( \, \mathbb{E} \,   \norm{ \, {F}(x, \tilde{\xi}) - \mathbb{E}\,[\,F(x, \tilde{\xi})\,]\, }^2 \Big) ^{1/2}\leq \sigma_F $ and 	$  \displaystyle{
	\sup_{x  \in X } \, \,
} \Big( \, \mathbb{E} \,   \norm{ \, {G}(x, \tilde{\xi}) - \mathbb{E}\,[\,G(x, \tilde{\xi})\,]\, }^2 \Big) ^{1/2}\leq \sigma_G $.

\begin{proposition}\label{prop:SAA_error_bound}

Let $\{\xi^t\}_{t=1}^n$ and $\{\eta^s\}_{s=1}^m$ be two independent sample sets generated from the probability distribution of the random variable $\tilde{\xi}$.
Suppose that the assumptions (A1) and  (A2)  hold. Then the following three statements hold:
\gap

{
(a) $\overline \Theta_{n,m}(x)$ converges to $\Theta(x)$ as $n, m \to +\infty$ with probability 1  uniformly on $X$.}
\gap

{
(b) with probability 1, there exist a positive integer $\bar{n}$ and a positive scalar $U$ such that for any $n,m \geq \bar{n}$,
$\displaystyle{
\sup_{x\in X}}
\,  \left|\overline \Theta_{n,m}(x) \right| < U$. }

\gap

{
(c) under Assumption (A4),   $
\displaystyle\sup_{x\in X} \, \mathbb{E}  \,  \left|{ \, \overline{\Theta}_{n,m}(x) - \Theta(x)  \, }\right|^2  \leq  (\mbox{Lip}_{\psi})^2  \,
(\mbox{Lip}_{\varphi})^2 \cdot  \ell_{\varphi}  \, \Bigg(  \frac{\sigma_F^{\,2}   }{m} + \frac{  \sigma_F^{\,2} \,  }{m\, n}  +
\displaystyle{\frac{ \sigma_G^{\,2}  \,  }{n}} \Bigg),$}
which implies that $
\displaystyle\sup_{x\in X} \, \mathbb{E}  \,  \left|{ \, \overline{\Theta}_{n,m}(x) - \Theta(x)  \, }\right|    \leq   \frac{C_1}{ \,  {\min\{m, n\}}^{1/2}}$ with some positive constant $C_1$.

\end{proposition}

\gap

The following lemma provides a sufficient condition on the sequence of sample sizes for the convergence of the SMM Algorithm with the proof given in Appendix.

\begin{lemma} \label{lm:condition on sample sizes} \rm
Suppose that there exist a positive integer $\bar{\nu}$ and scalars $c_1, c_2$ and $c_3$ with $c_3 < \bar{\nu}$,
\[
\max\left\{ \, N_{\nu-1} + 1, \, c_2 \,  \nu^{\, 1+ 2 c_1  } \, \right\} \, \leq \, N_{\nu} \, \leq \, \displaystyle{
 {N_{\nu-1}}\big/{\Big(1-\displaystyle{\frac{c_3}{\nu}}\Big)}
}, \epc \mbox{ for all } \nu > \bar{\nu}.
\]
Then we have $\displaystyle{
\sum_{\nu=1}^{\infty}
} \, \displaystyle{\Big(1 - \Big( \frac{ N_{\nu-1}}{N_{\nu}} \Big)^{\,2} \, \Big)
} \, \displaystyle{
\frac{1}{N_{\nu-1}^{\, 1/2}}
} \, < \, \infty \epc \mbox{and} \epc
\displaystyle{
\sum_{\nu=1}^{\infty}
} \, \displaystyle{
\frac{N_{\nu} - N_{\nu-1}}{N_{\nu}}
} \, \displaystyle{
\frac{1}{( \, N_{\nu} - N_{\nu-1} \, )^{ \, 1/2}}
} \, < \, \infty.$
\end{lemma}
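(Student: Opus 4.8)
The plan is to reduce both series to comparison with convergent $p$-series, by first extracting a single per-iteration increment bound from the upper-bound hypothesis and then inserting the polynomial lower bound $N_k \geq c_2\, k^{\,1+2c_1}$. The whole argument rests on one inequality. Multiplying the constraint $N_k \leq N_{k-1}/(1 - c_3/k)$ by the factor $1 - c_3/k$, which is positive for $k > \bar{k}$ because $c_3 < \bar{k} \leq k$, gives $N_k(1 - c_3/k) \leq N_{k-1}$, and hence the key increment bound
\[
N_k - N_{k-1} \, \leq \, \frac{c_3}{k}\, N_k, \qquad k > \bar{k}.
\]
Every subsequent estimate follows from this together with the monotonicity $N_{k-1} \leq N_k$.

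For the first series I would use the factorization $1 - (N_{k-1}/N_k)^2 = (N_k - N_{k-1})(N_k + N_{k-1})/N_k^2$. Since $N_{k-1} \leq N_k$, the factor $N_k + N_{k-1} \leq 2N_k$, so the increment bound yields $1 - (N_{k-1}/N_k)^2 \leq 2(N_k - N_{k-1})/N_k \leq 2c_3/k$. Dividing by $N_{k-1}^{1/2}$ and inserting $N_{k-1} \geq c_2\,(k-1)^{1+2c_1}$ (valid once $k-1 > \bar{k}$) bounds the $k$-th summand by a constant times $k^{-1}(k-1)^{-(1+2c_1)/2} = \mathrm{O}\!\left(k^{-(3/2 + c_1)}\right)$; since the exponent $3/2 + c_1 > 1$, the tail of this series converges.

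For the second series I would first simplify the summand to $(N_k - N_{k-1})^{1/2}/N_k$. Taking the square root of the increment bound gives $(N_k - N_{k-1})^{1/2} \leq c_3^{1/2}\, N_k^{1/2}\, k^{-1/2}$, which collapses the summand to at most $c_3^{1/2}/\!\left(k^{1/2} N_k^{1/2}\right)$; the polynomial lower bound $N_k \geq c_2\, k^{1+2c_1}$ then produces the estimate $(c_3/c_2)^{1/2}\, k^{-(1+c_1)}$. This is summable precisely when $c_1 > 0$, so the exponent here is the tighter of the two.

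Finally, I would observe that only the tail $k > \bar{k}+1$ is governed by the hypotheses and needs the comparison; the finitely many earlier terms are finite (the sample sizes being positive integers once the constraint forces $N_k \geq N_{k-1}+1$) and do not affect convergence. The main obstacle is not any single step but the exponent bookkeeping: one must track the powers of $k$ carefully so that both comparison series have exponent strictly larger than one. The second series is the binding constraint, pinning down the role of the polynomial growth rate $1 + 2c_1$ and implicitly requiring $c_1 > 0$.
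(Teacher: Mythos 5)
Your proposal is correct and follows essentially the same route as the paper: both extract the increment bound $N_k - N_{k-1} \leq (c_3/k)\,N_k$ from the upper-bound hypothesis and then compare with a $p$-series via the polynomial lower bound $N_k \geq c_2\,k^{1+2c_1}$, with the second series handled identically. The only (immaterial) difference is in the first series, which you bound directly using $N_{k-1} \geq c_2\,(k-1)^{1+2c_1}$, whereas the paper observes $N_k - N_{k-1} \leq c_3 N_{k-1}$ and thereby dominates the first series by a constant multiple of the second.
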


{The Robbins-Siegmund nonnegative almost supermartingale convergence
lemma (\cite[Theorem~1]{robbins1971convergence})
is a fundamental tool for the subsequential convergence result.
	\begin{lemma}
	 \label{lm:R-S-supermartingale}
	 Let $\{y_k\}$, $\{u_k\}$, $\{a_k\}$, $\{b_k\}$ be sequences of nonnegative integrable
random variables, adapted to the filtration $\{\mathcal H_k\}$, such that for all $k \in \mathbb{N}$,
$\mathbb{E}\,[ \, y_{k+1} \mid \mathcal{H}_k\, ] \leq (1+a_k) \, y_k - u_k +b_k$,
$ {\sum_{k=1}^\infty}\, a_k  < + \infty$, and $ {\sum_{k=1}^\infty} \, b_k < + \infty $   almost surely.
Then with probability 1, $\{y_k\}$
converges and $ {\sum_{k=1}^\infty}  u_k < + \infty$.
\end{lemma}}

	\gap
	
	We are now ready to state and prove the following subsequential convergence result
	of the SMM algorithm. {In the rest of the analysis, we let $\Omega^{\,\nu}$ denote the $(2 \, N_\nu)$-fold Cartesian product of the sample space $\Omega$; let $\mathcal H_\nu$ and $\mathbb{P}_\nu$ be the $\sigma$-algebra
and the probability measure respectively generated by the tuple of random vectors
 $\big(\{ \xi^t\}_{t=1}^{N_\nu}, \,\{ \eta^s\}_{s=1}^{N_\nu}\big)$. We further let $\mathbb{E}_\nu[\,\bullet \, | \,  \mathcal H_{\nu-1}\,]$ denote
the conditional expectation on the probability space $(\Omega^{\nu}, \mathcal H_\nu, \mathbb{P}_\nu)$ given the $\sigma$-algebra $
\mathcal H_{\nu-1}$.}  For the subsequential convergence analysis, there are two sets of assumptions: one set---(A1), (A2), and (A4)---is for the functions
	defining the objective $\Theta$ of the problem (\ref{eq:compound_composite_SP}).
	The other assumption (A3) is for the surrogate functional families $\mathcal{G}$ and $\mathcal{F}$.
Lastly, there is also a technical condition (E$_{\infty}$) at the limit point that enables 
the application of a strong law of  large numbers for the sum of i.i.d.\ Banach space valued random sets \cite[Theorem~2.1]{puri1983strong}.
	
	
	\begin{theorem} \label{thm:convergence_smm} \rm
		Let $\rho > 0$ be arbitrary.
Let the sequence of sample sizes $\{N_\nu\}$ satisfy the conditions in Lemma~\ref{lm:condition on sample sizes}.
Under assumptions (A1)--(A4), for any accumulation point $x^{\infty}$ of the sequence $\{x^\nu\}$
generated by the SMM algorithm, there exists a well-defined
$\wh{V}(\bullet; x^{\infty}) \in \mathcal{V}(x^{\infty})$ such that  
$x^{\infty} \in \underset{x\in X}{\mbox{argmin}} \, \wh{V}(x; x^{\infty})$ 
with probability 1 if the following  condition holds at $x^\infty$:

\gap

{\textbf{(E$_\infty$)} For almost all $\xi \in \Xi$, the family $\mathcal{G} (x^\infty, \xi)$ is a compact and convex subset in the (Banach) space of 
continuous functions  on a compact domain $X$ {equipped with the supremum norm} such that
$\mathbb{E}\left[ \, \sup\left\{ \, \displaystyle{
\sup_{x \in {X}}
} \, \| \, \wh{G}(x, \tilde \xi; x^\infty)\| \, \mid \, \wh{G}(\bullet, \tilde \xi; x^\infty) \in \mathcal{G} (x^\infty, \tilde \xi) \, \right\} \, \right] < \infty$.}

	\end{theorem}
	
\begin{proof}
		
Suppose that at the $\nu$th iteration, the sample average surrogate function $\wh{V}_{N_\nu} (x; x^{\, \nu})$ is constructed as follows
with $\wh{G}^{\,t}(\bullet, \xi^{\,t}; x^{ \nu}) \in \mathcal{G}(x^{  \nu}, \xi^{\,t})$ and
$\wh{F}^{\, s}(\bullet, \eta^{\, s}; x^{  \nu}) \in \mathcal{F} (x^{  \nu}, \eta^{\,s})$ for  $t , s \in [N_\nu]$,
		 \[
			\wh{V}_{N_{\nu}}(x; x^\nu) \, \triangleq \, \psi\Bigg( \, \displaystyle{
				\frac{1}{N_\nu}
			} \, \displaystyle{
				\sum_{t=1}^{N_\nu}
			} \, \varphi\Bigg( \wh{G}^{\, t}(x, \xi^{\, t}; x^\nu), \frac{1}{N_\nu}\sum_{s=1}^{N_\nu}
			\wh{F}^{\,s}(x, \eta^{\, s}; x^{\, \nu}) \Bigg) \, \Bigg).
			\]
		By the update rule in SMM algorithm, we derive
		\begin{equation} \label{eq:update_rule_mm_descent} 
			\overline{\Theta}_{N_{\nu}}(x^{\nu+1}) + \displaystyle{
				\frac{1}{2\rho}
			} \, \norm{x^{\nu+1}-x^\nu}^2  \leq   \wh{V}_{N_{\nu}}(x^{\nu+1}; x^{\nu}) +
			\displaystyle{
				\frac{1}{2 \rho}
			} \, \norm{x^{\nu+1}-x^{\, \nu}}^2  \leq   \wh{V}_{N_{\nu}}(x^{\, \nu}; x^{\, \nu}) \, = \,
			\overline{\Theta}_{N_{\nu}}(x^{\, \nu}). \end{equation}
		Since
	 $$\displaystyle{
				\frac{1}{N_{\nu}}
			} \, \displaystyle{
				\sum_{s=1}^{N_{\nu}}
			} \, F(x^{\, \nu}, \eta^{\, s}) = \, \displaystyle{
				\frac{N_{\nu-1}}{N_{\nu}}
			}  \Bigg[ \, \displaystyle{
				\frac{1}{N_{\nu-1}}
			} \, \displaystyle{
				\sum_{s=1}^{N_{\nu-1}}
			} \, F(x^{\, \nu}, \eta^{\, s}) \, \Bigg] + \displaystyle{
				\frac{N_{\nu} - N_{\nu-1}}{N_{\nu}}
			}  \Bigg[  \displaystyle{
				\frac{1}{N_{\nu} - N_{\nu-1}}
			} \, \displaystyle{
				\sum_{s=N_{\nu-1}+1}^{N_{\nu}}
			} \, F(x^{\, \nu}, \eta^{\, s}) \, \Bigg], $$
		by the convexity of each component function $\varphi_j$ for $j \in [ \ell_{\varphi}]$,
		we derive that, for all $t = 1, \cdots, N_{\nu}$,
		 \[ \begin{array}{ll}
			\varphi_j\Bigg( G(x^{\,\nu},\xi^{\, t}), \, \displaystyle{
				\frac{1}{N_{\nu}}
			} \, \displaystyle{
				\sum_{s=1}^{N_{\nu}}
			} \, F(x^{\,\nu}, \eta^{\, s}) \Bigg)  &  \leq  \, \displaystyle{
				\frac{N_{\nu-1}}{N_{\nu}}
			} \, \varphi_j\Bigg( G(x^{\, \nu},\xi^{\, t}), \, \displaystyle{
				\frac{1}{N_{\nu-1}}
			}   \displaystyle{
				\sum_{s=1}^{N_{\nu-1}}
			} \, F(x^{\, \nu}, \eta^{\, s}) \Bigg) + \\ [0.2in]
			& \quad \displaystyle{
				\frac{N_{\nu} - N_{\nu-1}}{N_{\nu}}
			} \, \varphi_j\Bigg( G(x^{\, \nu},\xi^{\, t}), \, \displaystyle{
				\frac{1}{N_{\nu} - N_{\nu-1}}
			}  \displaystyle{
				\sum_{s=N_{\nu-1}+1}^{N_{\nu}}
			}  F(x^{\, \nu}, \eta^{\, s}) \, \Bigg) .
			\end{array}
			\]
		Hence, for every $j \in [\ell_\varphi]$,
			\[
			\begin{array}{ll}
			 & \displaystyle{
				\frac{1}{N_\nu}
			} \, \displaystyle{
				\sum_{t=1}^{N_\nu}
			} \, \varphi_j\Bigg(G(x^\nu, \xi^{\, t}), \, \displaystyle{
				\frac{1}{N_\nu}
			} \, \displaystyle{
				\sum_{s=1}^{N_\nu}
			} \, F(x^\nu,\eta^{\, s}) \Bigg)\\[0.25in]
			  = \, & \displaystyle{
				\frac{(N_{\nu-1})^2}{(N_{\nu})^2}
			} \, \Bigg[ \, \displaystyle{
				\frac{1}{N_{\nu-1}}
			} \, \displaystyle{
				\sum_{t=1}^{N_{\nu-1}}
			} \, \varphi_j\Bigg(G(x^\nu, \xi^{\, t}), \,  \displaystyle{
				\frac{1}{N_{\nu-1}}
			} \, \displaystyle{
				\sum_{s=1}^{N_{\nu-1}}
			} \, F(x^\nu,\eta^{\, s}) \Bigg) \, \Bigg] \\ [0.3in]
			& \quad + \, \displaystyle{
				\frac{N_{\nu-1} \left( \, N_{\nu} - N_{\nu-1} \, \right)}{(N_{\nu})^2}
			} \, \Bigg[ \, \displaystyle{
				\frac{1}{N_{\nu} - N_{\nu-1}}
			} \, \displaystyle{
				\sum_{t=N_{\nu-1}+1}^{N_{\nu}}
			} \, \varphi_j\Bigg(G(x^\nu, \xi^{\, t}), \displaystyle{
				\frac{1}{N_{\nu-1}}
			} \, \displaystyle{
				\sum_{s=1}^{N_{\nu-1}}
			} \, F(x^\nu,\eta^{\, s}) \Bigg) \, \Bigg]  \\ [0.3in]
			& \quad +\, \displaystyle{
				\frac{N_{\nu} - N_{\nu-1}}{N_{\nu}}
			} \, \Bigg[ \, \displaystyle{
				\frac{1}{N_{\nu}}
			} \, \displaystyle{
				\sum_{t=1}^{N_{\nu}}
			} \, \varphi_j\Bigg( G(x^{\, \nu},\xi^{\, t}), \,  \displaystyle{
				\frac{1}{N_{\nu} - N_{\nu-1}}
			} \, \displaystyle{
				\sum_{s=N_{\nu-1}+1}^{N_{\nu}}
			} \, F(x^{\, \nu}, \eta^{\, s}) \, \Bigg) \, \Bigg].
			\end{array}
			\]
		Since  $\displaystyle{
				\frac{(N_{\nu-1})^2}{(N_{\nu})^2}
			} + \displaystyle{
				\frac{N_{\nu-1} \, ( \, N_{\nu} - N_{\nu-1} \, )}{(N_{\nu})^2}
			} + \displaystyle{
				\frac{N_{\nu} - N_{\nu-1}}{N_{\nu}}
			} \, = \, 1$,
		by the isotonicity and convexity of $\psi$,  
		\[
			\begin{array}{ll}
			&\overline{\Theta}_{N_\nu}(x^\nu) - \overline{\Theta}_{N_{\nu-1}}(x^\nu)\\[0.1in]
			= &\psi\Bigg(  \displaystyle{
				\frac{1}{N_{\nu}}
			} \, \displaystyle{
				\sum_{t=1}^{N_\nu}
			} \, \varphi\Bigg( G(x^\nu, \xi^{\, t}), \displaystyle{
				\frac{1}{N_\nu}
			} \, \displaystyle{
				\sum_{s=1}^{N_\nu}
			} \, F(x^\nu,\eta^{\, s}) \Bigg)   \Bigg) \\[0.2in]
		&-
			\psi\Bigg( \displaystyle{
				\frac{1}{N_{\nu-1}}
			} \, \displaystyle{
				\sum_{t=1}^{N_{\nu-1}}
			} \, \varphi\Bigg( G(x^{\, \nu},\xi^{\, t}), \, \displaystyle{
				\frac{1}{N_{\nu-1}}
			} \, \displaystyle{
				\sum_{s=1}^{N_{\nu-1}}
			} \, F(x^{\, \nu}, \eta^{\, s}) \Bigg)   \Bigg) \\ [0.3in]
			\leq & e_{\nu,1} + e_{\nu,2} + e_{\nu,3},
			\end{array} \]
		where
	 	{\small\[
			\begin{array}{ll}
			e_{\nu,1} \, &\triangleq \, \left( \, \displaystyle{
				\frac{(N_{\nu-1})^2}{(N_{\nu})^2}
			}  \, -1 \, \right) \, \Bigg| \,
			\psi\Bigg( \, \displaystyle{
				\frac{1}{N_{\nu-1}}
			} \, \displaystyle{
				\sum_{t=1}^{N_{\nu-1}}
			} \, \varphi\Bigg( G(x^{\, \nu}, \xi^{\, t}), \displaystyle{
				\frac{1}{N_{\nu-1}}
			} \, \displaystyle{
				\sum_{s=1}^{N_{\nu-1}}
			} \, F(x^\nu, \eta^{\, s}) \, \Bigg) \, \Bigg) -   \Theta(x^\nu)\,  \Bigg|,\\[0.3in]
			e_{\nu,2} \, & \triangleq \, \displaystyle{
				\frac{N_{\nu-1} \, \left( \, N_\nu-N_{\nu-1} \,  \right)}{(N_\nu)^2}
			} \,
			\Bigg| \,
			\psi\Bigg( \displaystyle{
				\frac{1}{N_\nu-N_{\nu-1}}
			}  \displaystyle{
				\sum_{t=N_{\nu-1}+1}^{N_\nu}
			}   \varphi\Bigg( G(x^\nu,\xi^{\, t} \, ), \displaystyle{
				\frac{1}{N_{\nu-1}}
			} \, \displaystyle{
				\sum_{s=1}^{N_{\nu-1}}
			} \, F(x^\nu, \eta^{\, s}) \Bigg) \, \Bigg)  -\Theta(x^\nu) \,  \Bigg|,\\[0.3in]
			e_{\nu,3} \, & \triangleq \, \left( \, 1 - \displaystyle{
				\frac{N_{\nu-1}}{N_\nu}
			} \, \right) \, \Bigg|\,
			\psi\Bigg( \, \displaystyle{
				\frac{1}{N_\nu}
			} \, \displaystyle{
				\sum_{t=1}^{N_\nu}
			} \, \varphi\Bigg( G(x^\nu,\xi^{\, t} \, ), \displaystyle{
				\frac{1}{N_\nu-N_{\nu-1}}
			} \, \displaystyle{
				\sum_{s=N_{\nu-1}+1}^{N_\nu}
			} \, F(x^\nu, \eta^{\, s})\Bigg) \, \Bigg)
			-\Theta(x^\nu) \, \Bigg|.
			\end{array}  \]}
		Combining with \eqref{eq:update_rule_mm_descent} and taking conditional expectations on both sides, we derive that
		
		\begin{equation} \label{eq:key descent} 
			\mathbb{E}_{\nu}   \left[ \, \overline{\Theta}_{N_{\nu}}(x^{\nu+1})   \mid \mathcal{H}_{\nu-1}  \right]  +
			\displaystyle{
				\frac{1}{2\rho}
			} \,\mathbb{E}_{\nu}   \left[\, \norm{x^{\nu+1}-x^\nu}^2   \mid \mathcal{H}_{\nu-1}  \right]   \leq   \overline{\Theta}_{N_{\nu-1}}(x^\nu) +  \mathbb{E}_{\nu}   \left[ \,  e_{\nu,1} + e_{\nu,2} + e_{\nu,3} \mid  \mathcal H_{\nu-1}\, \right] .	\end{equation}	
		{By Proposition~\ref{prop:SAA_error_bound} above, it follows that
		there exists a positive constant $C_1$
		such that 
	 	\begin{equation}
	 	\label{eq:sampling_error} \left\{\begin{array}{lll}
			\mathbb{E}_{\nu} \left[ \, e_{\nu,1} \, \right] & \leq & \displaystyle{
				\frac{\mbox{Lip}_{\psi} \,  C_1}{(N_{\nu-1})^{\, 1/2}}
			} \, \left( \, 1 - \displaystyle{
			\frac{(N_{\nu-1})^2}{(N_{\nu})^2}
		} \, \right) \, 
			 \\ [0.3in]
			\mathbb{E}_\nu \left[ \, e_{\nu,2} \, \right] & \leq & \displaystyle{
				\frac{\mbox{Lip}_{\psi} \, C_1}{(N_\nu-N_{\nu-1})^{\, 1/2}}
			} \, \displaystyle{
				\frac{N_{\nu-1}(N_\nu-N_{\nu-1})}{(N_\nu)^2}
			} \, 
			\\ [0.3in]
			\mathbb{E}_\nu \left[ \, e_{\nu,3} \, \right] & \leq & \displaystyle{
				\frac{\mbox{Lip}_{\psi} \, C_1}{(N_\nu-N_{\nu-1})^{\, 1/2}}
			} \, \displaystyle{
				\frac{N_\nu - N_{\nu-1}}{N_\nu}
			}.  
			\end{array}\right. \end{equation}
}
		
\noindent {From Lemma \ref{lm:condition on sample sizes}, it follows that the summation
${\sum_{\nu=1}^{\infty}
} \, \mathbb{E} \, \Big[  \, e_{\nu,1} + e_{\nu,2} + e_{\nu,3}  \, \Big] $ is finite.
Thus we can derive that $ \sum_{\nu=1}^\infty \mathbb{E}_{\nu} \left[ e_{\nu,1} + e_{\nu,2} + e_{\nu,3}\, \mid \, \mathcal H_{\nu-1}\, \right]$
is finite with probability 1; this can be proved by contradiction as follows.  Specifically, suppose that
$\mathbb{P} \left( \, \sum_{\nu=1}^\infty \mathbb{E}_{\nu} \left[ e_{\nu,1} + e_{\nu,2} + e_{\nu,3}\, \mid \, \mathcal H_{\nu-1}\, \right]\, = \infty \right) >0 $.
Since $\{e_{\nu,1}\}$, $\{e_{\nu,2}\}$, $\{e_{\nu,3}\}$ are all nonnegative scalars, the infinite summation can be interchanged with the expectation operator and thus $
		  {	\sum_{\nu=1}^{\infty}
		} \, \mathbb{E}  \, \Big[  \,      e_{\nu,1} + e_{\nu,2} + e_{\nu,3}  \, \Big]  = \mathbb{E} \,\left[ \,  \sum_{\nu=1}^{\infty}
		 \,  \mathbb{E}_{\nu} \left[ \,     e_{\nu,1} + e_{\nu,2} + e_{\nu,3}  \, \mid \, \mathcal H_{\nu-1}\, \right] \, \right]$ is infinite, which  contradicts the previous finite expectation fact.}
		 \gap
		
{By Proposition \ref{prop:SAA_error_bound} (b), there exists $\bar \nu$ such that with probability 1,  $\inf_{x \in X} \, \overline \Theta_{N_\nu} (x) > - U$ for any $\nu \geq \bar \nu$. Thus, by adding $U$ on both sides of \eqref{eq:key descent}, we can  apply  the Robbins-Siegmund nonnegative almost supermartingale convergence result in Lemma \ref{lm:R-S-supermartingale}, and  deduce that $ {
			\lim_{\nu \to \infty}
		} \, \overline{\Theta}_{N_{\nu}}(x^{\nu+1})$ exists, and $ {	\sum_{\nu=1}^{\infty}} \, \mathbb{E}_{\nu}   \left[\, \norm{x^{\nu+1}-x^\nu}^2 \, \mid \mathcal{H}_{\nu-1}\, \right]$ is finite  with probability 1. The latter result yields that
	$
		  \mathbb{P}  \Big( \displaystyle{	\lim_{\nu \to \infty}	} \,      \mathbb{E}_{\nu}   \Big[\,\norm{x^{\nu+1}-x^\nu}^2 \, \mid \mathcal{H}_{\nu-1}\,\Big] = 0 \Big) =1 $ and thus $ \mathbb{E} \, \Big[ \, \displaystyle{\lim_{\nu \to \infty}} \,      \mathbb{E}_{\nu}   \Big[\,\norm{x^{\nu+1}-x^\nu}^2 \, \mid \mathcal{H}_{\nu-1}\,\Big] \, \Big] = 0.$ 
By the interchange of the limit and expectation operators holds under the compactness of $X$, we have $ \displaystyle{\lim_{\nu \to \infty}} \,      \mathbb{E}    \Big[\,\norm{x^{\nu+1}-x^\nu}^2 \,\Big]   =  \mathbb{E}    \Big[\,\displaystyle{\lim_{\nu \to \infty}} \,      \norm{x^{\nu+1}-x^\nu}^2 \,\Big]   = 0.$
	    Hence by the similar argument of contradiction,  we can derive that
		$\displaystyle{
			\lim_{\nu\to \infty}
		} \, \norm{x^{\nu+1}-x^{\nu}} = 0$ with probability~1.}
		For any accumulation point $x^{\infty}$ of the sequence $\{x^{\nu}\}$,
		let $\mathcal{K}$ be a subset of $\{1, 2, \ldots,\}$  such that $\displaystyle{
			\lim_{\nu (\in \mathcal{K}) \to +\infty}
		} \, x^{\nu} = x^{\infty}$,  which implies that with probability~1,
		$\displaystyle{
			\lim_{\nu (\in \mathcal{K}) \to +\infty}
		} \, x^{\nu+1} = x^{\infty}.$
		By Proposition \ref{prop:SAA_error_bound} and \cite[Proposition~5.1]{shapiro2009lectures},
		it follows that, $\displaystyle{
			\lim_{\nu \to \infty}
		} \, \overline{\Theta}_{N_{\nu}}(x^{\nu+1}) = \Theta(x^{\infty} )$,
		with probability 1.
		
		\gap
		
		Under the uniform outer semicontinuity of set-valued maps $\mathcal{F}$
		and $\mathcal{G}$ in (A3$_F$) and (A3$_G$), there exist functions
		$\wh{G}^{\, t, \infty} (\bullet,\xi^{\,t};x^{\, \infty}) \in {\cal G}(x^{\, \infty}, \xi^{\,t})$,
		$\wh{F}^{\,s, \infty}(\bullet,\eta^{\,s};x^{\, \infty}) \in {\cal F}(x^{\, \infty}, \eta^{\,s})$, and
		a nonnegative integrable function $L(\xi)$ such that for all $x \in X$ and any
		$\varepsilon > 0$, there exist a subset  of $\mathcal{K}$ (which is assumed to be
		$\mathcal{K}$ without loss of generality) and  an integer
		$\bar{\nu}(x,\varepsilon)$,  such that  for any
		$\nu (\in \mathcal{K}) \geq \bar{\nu}(x,\varepsilon)$ and all $t, s \in[N_\nu]$,
		\begin{equation} \label{eq:uniform continuity in proof}
		\norm{\, \wh{G}^{\,t}(x,\xi^{\,t};x^{\, \nu}) -
			\wh{G}^{\, t, \infty} (x,\xi^{\,t};x^{\, \infty}) \, }    \leq \, \varepsilon \, L(\xi^{\,t}), \quad \norm{ \, \wh{F}^{\,s}(x,\eta^{\,s};x^{\, \nu}) -
			\wh{F}^{\, s, \infty} (x,\eta^{\,s};x^{\, \infty}) \,}  \leq \, \varepsilon \, L(\eta^{\,s}).
		\end{equation}
		Let
		\begin{equation} \label{eq:limiting surrogate}
		\wh{V}_{\infty;N_{\nu}}(x; x^{\infty})  \triangleq  \psi\Bigg( \, \displaystyle{
			\frac{1}{N_\nu}
		} \, \displaystyle{
			\sum_{t=1}^{N_\nu}
		} \, \varphi\Bigg( \wh{G}^{\, t, \infty} (x,\xi^{\, t};x^{\infty}), \, \displaystyle{
			\frac{1}{N_\nu}
		} \, \displaystyle{
			\sum_{s=1}^{N_\nu}
		} \, \wh{F}^{\, s, \infty} (x,\eta^{\, s}; x^{\infty}) \, \Bigg) \, \Bigg).
		\end{equation}
		Let $x \in X$ be fixed but arbitrary.  By the Lipschitz continuity of $\psi$ and
		$\varphi$,
		it follows that for any $\nu (\in \mathcal{K}) \geq \bar{\nu}(x,\varepsilon)$, we have
		\[
		\begin{array}{ll}
		\left| \, \wh{V}_{N_{\nu}}(x; x^{\nu})  -
		\wh{V}_{\infty;N_{\nu}}(x;x^{\infty}) \, \right|
		\, & \leq \, \mbox{Lip}_{\psi} \,  \mbox{Lip}_{\varphi} \, \displaystyle{
			\frac{\varepsilon}{N_{\nu}}
		} \, \displaystyle{
			\sum_{t=1}^{N_{\nu}}
		} \,\Bigg[ \, L(\xi^{\, t}) + \displaystyle{
			\frac{1}{N_{\nu}}
		} \, \displaystyle{
			\sum_{s=1}^{N_{\nu}}
		} \, L(\eta^{\, s}) \, \Bigg] \\[0.3in]
		& = \mbox{Lip}_{\psi} \,  \mbox{Lip}_{\varphi} \, \displaystyle{
			\frac{\varepsilon}{N_{\nu}}
		} \, \Bigg[ \,  \displaystyle{
			\sum_{t=1}^{N_{\nu}}
		} \,\, L(\xi^{\, t}) + \displaystyle{
			\sum_{s=1}^{N_{\nu}}
		} \, L(\eta^{\, s})  \, \Bigg] .
		\end{array}
		\]
		By the law of large numbers, it follows that $
		\displaystyle{
			\lim_{\nu \to \infty}
		} \, \displaystyle{
			\frac{1}{N_{\nu}}
		} \, \displaystyle{
			\sum_{t=1}^{N_{\nu}}
		} \, L(\xi^{\, t}) = \lim_{\nu \to \infty}
		\, \displaystyle{
			\frac{1}{N_{\nu}}
		} \, \displaystyle{
			\sum_{s=1}^{N_{\nu}}
		} \, L(\eta^{\, s}) \, = \mathbb{E}[ \, L(\tilde{\xi} )\, ] \, < \, \infty$ almost surely.
		Since $\varepsilon$ is arbitrary, it
		follows that for the infinite subset $\mathcal{K}$,
		with probability one, $
			\displaystyle{
				\lim_{\nu (\in \mathcal{K}) \to \infty}
			} \, \left| \, \wh{V}_{N_{\nu}}(x; x^{\nu})  -
			\wh{V}_{\infty;N_{\nu}}(x;x^{\infty}) \, \right| \, = \, 0. $
		Under condition (E$_{\infty}$), the strong law of large numbers for the sum of i.i.d.\ Banach space valued random sets
		\cite[Theorem~2.1]{puri1983strong} yields the existence of
		functions $\wh{G}^{\infty}(\bullet,\xi; x^\infty) \in \mathcal{G}(x^\infty, \xi)$
		and $\wh{F}^{\infty}(\bullet,\xi; x^\infty) \in \mathcal{F}(x^\infty, \xi)$ such that
        $\wh{G}^{\infty}(x,\tilde{\xi}; x^\infty)$ and $\wh{F}^{\infty}(x,\tilde{\xi}; x^\infty)$ are $\mathbb{P}$-integrable for any $x\in X$ and
        that
		\[ \begin{array}{l}
		f^{\infty}(x)  \triangleq  \mathbb{E}\,  \left[ \,\wh{F}^{\infty}(x, \tilde{\xi}; x^\infty) \,  \right] =\displaystyle{\lim_{\nu (\in \mathcal{K})  \to \infty}
			\frac{1}{N_\nu} \sum_{s=1}^{N_\nu}} \, \wh{F}^{\, s, \infty}(x, \eta^{\, s}; \, x^{\infty}), \\[0.2in] 	\varphi^{\infty}(x)  \triangleq \mathbb{E} \, \Big[ \varphi\Big(  \,\wh{G}^{\infty}(x, \tilde{\xi}; x^\infty), \,  \mathbb{E}\,  \Big[ \,\wh{F}^{\infty}(x, \tilde{\xi}; x^\infty) \,  \Big]\,\Big) \, \Big] = \displaystyle{\lim_{\nu (\in \mathcal{K})  \to \infty}
			\frac{1}{N_\nu} \sum_{t=1}^{N_\nu}} \, \varphi\Big( \, \wh{G}^{\, t, \infty}(x, \xi^{\, t}; \, x^{\infty}), \,
		f^{\infty}(x) \Big).
		\end{array}
		\]
 	Thus, $\wh{V}_{\infty}(x; x^{\infty}) \triangleq \psi\left(  \varphi^{\infty}(x) \, \right) $ is well defined.
		By the Lipschitz continuity of $\psi$, we have
	   \[
			\begin{array}{l}
			\Big| \, \wh{V}_{\infty; N_\nu} (x; x^\infty) - \wh{V}_{\infty}(x; x^\infty) \, \Big| \, \leq \mbox{Lip}_{\psi} \, \Bigg\| \, \, \displaystyle{
					\frac{1}{N_\nu}
				} \, \displaystyle{
					\sum_{t=1}^{N_\nu}
				} \, \varphi\Bigg( \wh{G}^{\,t, \infty} (x,\xi^{\, t};x^{\infty}), \, \displaystyle{
					\frac{1}{N_\nu}
				} \, \displaystyle{
					\sum_{s=1}^{N_\nu}
				} \, \wh{F}^{\,s, \infty} (x,\eta^{\, s}; x^{\infty}) \, \Bigg) - \varphi^{\infty}(x) \, \Bigg\| \, \\[0.3in]
			\leq  \mbox{Lip}_{\psi} \, \Bigg\| \, \, \displaystyle{
					\frac{1}{N_\nu}
				} \, \displaystyle{
					\sum_{t=1}^{N_\nu}
				} \, \varphi\Bigg( \wh{G}^{\,t, \infty} (x,\xi^{\, t};x^{\infty}), \, \displaystyle{
					\frac{1}{N_\nu}
				} \, \displaystyle{
					\sum_{s=1}^{N_\nu}
				} \, \wh{F}^{\,s, \infty} (x,\eta^{\, s}; x^{\infty}) \, \Bigg) - \frac{1}{N_\nu} \sum_{t=1}^{N_\nu} \, \varphi\left( \, \wh{G}^{\, t,   \infty}(x, \xi^{\, t}; \, x^{\infty}), \, f^{\infty}(x) \right) \Bigg\| \\[0.3in]
			\quad  + \, \mbox{Lip}_{\psi} \, \Bigg\| \, \displaystyle{\frac{1}{N_\nu} \sum_{t=1}^{N_\nu}}
				\, \varphi\left( \, \wh{G}^{\,t, \infty}(x, \xi^{\, t}; \, x^{\infty}), \, f^{\infty}(x) \right)  -
				\varphi^{\infty}(x) \, \Bigg\|\\[0.3in]
			\leq \, \mbox{Lip}_{\psi} \left[ \,  \mbox{Lip}_{\varphi}  \, \Bigg\| \, \displaystyle{ \frac{1}{N_\nu} \,
					\sum_{s=1}^{N_\nu}
				} \, \wh{F}^{\,s, \infty} (x,\eta^{\, s}; x^{\infty})  - f^{\infty} (x)\, \Bigg\|+ \Bigg\| \, \displaystyle{\frac{1}{N_\nu} \sum_{t=1}^{N_\nu}} \,
\varphi\left( \, \wh{G}^{\,t, \infty}(x, \xi^{\, t}; \, x^{\infty}), \, f^{\infty}(x) \right)  - \varphi^{\infty}(x) \, \Bigg\| \, \, \right].
			\end{array}
			\]
Hence, we have $\displaystyle{\lim_{\nu ( \in \mathcal{K}) \to \infty}} \wh{V}_{\infty; N_\nu}(x; \, x^{\infty}) = \wh{V}_{\infty}(x; x^{\infty})$.
It follows that with probability one,
		\[
		\displaystyle{
			\lim_{\nu (\in \mathcal{K}) \to \infty}
		} \, \wh{V}_{N_{\nu}}(x; x^{\nu})\\[0.1in]
		=  \displaystyle{
			\lim_{\nu (\in \mathcal{K})\to  \infty}
		}  \left( \wh{V}_{N^{\nu}}(x; x^{\nu}) -
		\wh{V}_{\infty;N_{\nu}}(x; x^{\infty}) \right)  + \displaystyle{
			\lim_{\nu (\in \mathcal{K})\to \infty}
		}   \wh{V}_{\infty;N_{\nu}}(x;x^{\infty}) =   \wh{V}_{\infty}(x; x^{\infty}).
		\]
		Let  $
		\wt{V}_{N_{\nu}}^{\rho}(\bullet;x^{\, \nu}) \, \triangleq \,
		\wh{V}_{N_{\nu}}(\bullet;x^{\, \nu}) +
		\displaystyle{\frac{1}{2 \, \rho}}
		\, \| \, \bullet - x^{\, \nu} \, \|_2^2$. From the optimality of $x^{\nu+1}$, it follows that
		\[
		\overline{\Theta}_{N_{\nu}}(x^{\nu+1}) + \displaystyle{
			\frac{1}{2\rho}
		}  \norm{x^{\nu+1}-x^{\nu}}^2 \leq
		\wt{V}_{N_{\nu}}^{\rho}(x^{\nu+1}; x^{\nu})
		\, \leq \, \wh{V}_{N_{\nu}}(x; x^{\nu}) + \displaystyle{
			\frac{1}{2\rho}
		} \, \norm{x-x^{\nu}}^2, \ \forall \, x \, \in \, X.
		\]
		Letting $\nu (\in \mathcal{K}) \to \infty$ in the above inequality, we derive that
		with probability 1, for all $x \in X$,
		\begin{equation} \label{eq:last step uses convexity}
		\Theta(x^{\infty}) \, = \, \wh{V}_{\infty}(x^{\infty};x^{\infty})
		\, \leq \, \wh{V}_{\infty}(x; x^{\infty} )+ \displaystyle{
			\frac{1}{2 \rho}
		} \, \norm{x -x^{\infty}}^2.
		\end{equation}
		Since the right-hand side is a convex function in $x$, the stated result of this theorem follows.  
		
	\end{proof}
	\gap
	
	We make an important observation about the proof of Theorem~\ref{thm:convergence_smm}.
	Namely, the convexity of
	the surrogate functions $\wh{G}(\bullet,\xi;x^{\nu})$ and $\wh{F}(\bullet,\xi;x^{\nu})$
	is not used until the last step when we drop the proximal term in the right-hand
	side of (\ref{eq:last step uses convexity}).  Of course, this convexity is needed to
	ensure the convexity of the function $\wh{V}_{N_{\nu}}(\bullet;x^{\nu})$ for the
	practical implementability of the SMM algorithm.  Nevertheless, in situations when the latter
	implementation is not an issue, as in the case discussed in
	Subsection~\ref{subsec:nonsmooth concave},   the convexity requirement of these
	surrogate functions can be removed with a suitable modification of the conclusion
	of Theorem~\ref{thm:convergence_smm}.  In what follows, we state a variant of the
	theorem without assuming the convexity of
	$\wh{G}(\bullet,\xi;x^{\nu})$ and $\wh{F}(\bullet,\xi;x^{\nu})$.
	
	\begin{corollary} \label{co:convergence without convexity} \rm
		Assume the same setting of Theorem~\ref{thm:convergence_smm} but without the convexity
		of $\wh{G}(\bullet,\xi;x^{\nu})$ and $\wh{F}(\bullet,\xi;x^{\nu})$.  Provided that the family of
		surrogate functions $\mathcal{G}(x^{  \infty}, \xi)$ and
		$\mathcal{F}(x^{  \infty}, \xi)$ satisfy condition   
		 (E$_{\infty}$)
		at the accumulation point $x^{\infty}$
		and that
		$\wh{V}(\bullet;x^{\infty})$ is additionally directionally differentiable
		at $x^{\infty}$, it holds that
		$\wh{V}(\bullet;x^{\infty})^{\, \prime}(x^{\infty};x - x^{\infty}) \geq 0$ for all
		$x \in X$; that is $x^{\infty}$ is a directional stationary solution of
		$\wh{V}(\bullet;x^{\infty})$ on $X$.
	\end{corollary}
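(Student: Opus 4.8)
The plan is to reuse the entire chain of arguments in the proof of Theorem~\ref{thm:convergence_smm} up to and including the inequality \eqref{eq:last step uses convexity}, and then to extract directional stationarity from it by a one-dimensional limiting argument. The key observation, already highlighted in the remark immediately following that proof, is that the derivation of \eqref{eq:last step uses convexity} never invokes the convexity of the surrogates $\wh{G}(\bullet,\xi;x^{\nu})$ and $\wh{F}(\bullet,\xi;x^{\nu})$: the descent recursion \eqref{eq:key descent}, the summability of the sampling errors via Proposition~\ref{prop:SAA_error_bound} and Lemma~\ref{lm:condition on sample sizes}, the passage to the limit using the uniform outer semicontinuity in (A3$_G$) and (A3$_F$), and the law of large numbers for i.i.d.\ random closed sets under (E$_{\infty}$) all remain valid verbatim. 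The only place convexity of the surrogates entered was the final sentence, which converted \eqref{eq:last step uses convexity} into an argmin statement; we simply drop that conclusion. Hence, with probability one, we retain
\[
\wh{V}(x^{\infty};x^{\infty}) \, \leq \, \wh{V}(x; x^{\infty}) + \displaystyle{\frac{1}{2 \, \rho}} \, \norm{x - x^{\infty}}^2, \quad \forall \, x \, \in \, X.
\]

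Next I would exploit the convexity of $X$. Fix an arbitrary $x \in X$; for every $\tau \in (0,1]$ the point $x^{\infty} + \tau(x - x^{\infty})$ lies in $X$, so substituting it into the displayed inequality and using the touching identity $\wh{V}(x^{\infty};x^{\infty}) = \Theta(x^{\infty})$ yields
\[
\wh{V}(x^{\infty} + \tau(x-x^{\infty}); x^{\infty}) - \wh{V}(x^{\infty};x^{\infty}) \, \geq \, -\displaystyle{\frac{\tau^{2}}{2 \, \rho}} \, \norm{x - x^{\infty}}^2.
\]
Dividing by $\tau > 0$ and letting $\tau \downarrow 0$, the left-hand side converges to $\wh{V}(\bullet;x^{\infty})^{\, \prime}(x^{\infty}; x - x^{\infty})$ by the assumed directional differentiability of $\wh{V}(\bullet;x^{\infty})$ at $x^{\infty}$, while the right-hand side tends to zero. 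This gives $\wh{V}(\bullet;x^{\infty})^{\, \prime}(x^{\infty}; x - x^{\infty}) \geq 0$, and since $x \in X$ was arbitrary, $x^{\infty}$ is a directional stationary point of $\wh{V}(\bullet;x^{\infty})$ on $X$.

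The step I expect to require the most care is not the limiting argument, which is routine, but the verification that \eqref{eq:last step uses convexity} genuinely survives the removal of convexity. In particular, one must confirm that the optimality inequality feeding into it, namely that $x^{\nu+1}$ globally minimizes $\wh{V}_{N_{\nu}}(\bullet;x^{\nu}) + \frac{1}{2\rho}\norm{\bullet - x^{\nu}}^2$ over $X$, continues to hold. This is precisely the premise of the corollary (and of the application in Subsection~\ref{subsec:nonsmooth concave}) that obtaining such a global minimizer of the now-possibly-nonconvex subproblem is not an implementation obstacle; granting it, every inequality leading to \eqref{eq:key descent} and \eqref{eq:last step uses convexity} is untouched. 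The weakening of the conclusion from global minimality to directional stationarity is exactly the price paid for the absence of convexity: the proximal inequality \eqref{eq:last step uses convexity} can then only be probed along feasible rays emanating from $x^{\infty}$, rather than being promoted to a global optimality statement.
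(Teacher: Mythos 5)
Your proposal is correct and takes essentially the same route as the paper: the paper's proof simply observes that \eqref{eq:last step uses convexity} makes $x^{\infty}$ the minimizer of $\wh{V}_{\infty}(\bullet;x^{\infty}) + \frac{1}{2\rho}\norm{\bullet - x^{\infty}}^2$ over $X$ and states that this ``readily implies'' directional stationarity. Your one-dimensional limiting argument along $x^{\infty}+\tau(x-x^{\infty})$ is exactly the standard way to fill in that last step, and your preliminary check that convexity of the surrogates is never used before \eqref{eq:last step uses convexity} matches the remark following the theorem's proof.
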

	\begin{proof} From (\ref{eq:last step uses convexity}), we have   that
		$x^{\infty} = \displaystyle{
			\operatornamewithlimits{\mbox{argmin}}_{x \in X}
		} \, \left[ \, \wh{V}_{\infty}(x; x^{\infty} )+ \displaystyle{
			\frac{1}{2 \rho}
		} \, \norm{x -x^{\infty}}^2 \, \right]$.  This readily implies that
		$x^{\infty}$ is a directional stationary solution of
		$\wh{V}_{\infty}(\bullet;x^{\infty})$ on $X$.
  
 \end{proof}
	
	\gap
	
Several remarks regarding the above theorem are presented in order.
\gap

{\emph{\bf Remark 1.} The proof of Theorem~\ref{thm:convergence_smm} can easily be extended to an
inexact version of the SMM Algorithm.	In fact, instead of the last inequality in
(\ref{eq:update_rule_mm_descent}), the inexact rule would require that
\[
\wh{V}_{N_{\nu}}(x^{\nu+1}; x^{\nu}) + \displaystyle{
\frac{1}{2 \rho}
} \, \norm{x^{\nu+1}-x^{\, \nu}}^2  \, \leq \, \wh{V}_{N_{\nu}}(x^{\, \nu}; x^{\, \nu}) + \delta_{\nu}.
\]
By suitably restricting the suboptimality tolerance $\delta_{\nu} > 0$, the inequality
(\ref{eq:key descent}) becomes
\[
\begin{array}{l}
\mathbb{E}_{\nu} \left[ \, \overline{\Theta}_{N_{\nu}}(x^{\nu+1}) \, \mid \mathcal{H}_{\nu-1}\, \right] + \displaystyle{
\frac{1}{2\rho}
} \,\mathbb{E}_{\nu} \left[\, \norm{x^{\nu+1}-x^\nu}^2   \mid \mathcal{H}_{\nu-1}  \right] \\ [0.15in]
\epc \leq \, \overline{\Theta}_{N_{\nu-1}}(x^\nu) + \mathbb{E}_{\nu} \left[ \,  e_{\nu,1} + e_{\nu,2} + e_{\nu,3}\, \mid \, \mathcal H_{\nu-1}\, \right]
+ \delta_{\nu}.
\end{array}
\]
Thus, if $\displaystyle{
\sum_{\nu=1}^{\infty}
} \, \delta_{\nu} < \infty$, the remaining proof continuous to hold with a minor modification to the last step of
proving the inequality (\ref{eq:last step uses convexity}) to accommodate the inexactness of
$x^{\nu+1}$.}
\gap

\emph{\bf Remark 2.} Theorem~\ref{thm:convergence_smm} shows that  with probability 1, any accumulation
	point of the sequence derived from the stochastic majorization-minimization algorithm
	has a certain fixed-point property relative to a well-defined convex surrogate function of the
	objective function $\Theta$ in the original compound SP (\ref{eq:compound_composite_SP}).
	In the present statement, this conclusion is fairly abstract as very little
	details of the
	surrogate function are provided except for the assumptions (A3$_G$) and (A3$_F$).
	In particular, an understanding of how the fixed-point conclusion is related to some
	kind of stationarity of the original problem (\ref{eq:compound_composite_SP}) is
	warranted, which will be addressed in the next section.
	
	\gap
	
\emph{\bf Remark 3.} When the functions $\psi(t) = \varphi(t) =  t$ (so that $\ell_G = 1$) for any
	scalar $t$, and the random function $F$ is absent ($\ell_F = 0$), the compound SP
	\eqref{eq:compound_composite_SP} reduces to the classical stochastic program.
	In a recent paper \cite{an2019stochastic}, the authors have studied the subsequential
	convergence of the stochastic difference-of-convex (dc) algorithm for solving such
	problems when the random function $G(\bullet\,,\widetilde{\xi})$ is further assumed
	to be a dc function. Unlike our algorithm where the sample size $N_\nu$ obey the conditions
in Lemma~\ref{lm:condition on sample sizes},
the number of samples used per iteration by the
	proposed stochastic dc algorithm in \cite{an2019stochastic} (for the special case)
	grows linearly. The higher
	sample complexity required by our algorithm is mainly due to the nature of the compound
	SP--the composition of a function and an expectation; specifically, in order for the
	summation of the errors $  {
		\sum_{\nu=1}^{\infty}
	} \, \mathbb{E} \left[ \, e_{\nu,1} + e_{\nu,2} + e_{\nu,3} \, \right] $ to be finite in the proof of
	Theorem \ref{thm:convergence_smm}, we need
	$N_\nu - N_{\nu-1}\to \infty$ as $\nu\to \infty$.
	
	\gap

	\section{Post-convergence Analysis: Stationarity} \label{sec:stationarity}
	
	We discuss the connection between the fixed-point property of a limit point of the
	SMM algorithm and some stationarity properties of (\ref{eq:compound_composite_SP}) for three structural cases of the component random functions of $G$
	and $F$: (i) the smooth case, (ii) the case of a dc function with a smooth concave
	part, and (iii) the dc case with a ``max-smooth'' concave part.  The terminology
	``max-smooth'' will be clear from the expression (\ref{eq:max dc}).  In the first two cases, each surrogation family $\mathcal{G}(x,\xi)$ and $\mathcal{F}(x, \xi)$
has only a  single member, so the validity of assumption  (E$_\infty$) is trivial. In Section~\ref{subsec:nonsmooth concave}, these assumptions are verified
for the dc case with nonsmooth concave part.  In all three cases,
	the objective function $\Theta$ in (\ref{eq:compound_composite_SP}) is not convex.
	
	\subsection{The smooth case}
	
	Suppose that each component function $F_j(\bullet,\xi)$ and
	$G_i(\bullet,\xi)$ are smooth
	functions with the Lipschitz gradient modulus $\kappa$ uniform for all $\xi \in \Xi$.
	Hence, for any $x^{\, \prime} \in X$, we may take their surrogate functions as
	\begin{equation}
	\label{eq:smooth_surrogation}
	\wh{F}_j(x,\xi;x^{\, \prime}) \, = \, F_j(x,\xi) + \displaystyle{
		\frac{\kappa}{2}
	} \, \norm{x- x^{\, \prime}}^2 \epc \mbox{and} \epc
	\wh{G}_i(x,\xi; x^{\, \prime}) \, = \, G_i(x,\xi) + \displaystyle{
		\frac{\kappa}{2}
	} \, \norm{x- x^{\, \prime}}^2.
	\end{equation}
	Thus the families ${\cal F}(x^{\, \prime}, \xi)$ and ${\cal G}(x^{\, \prime}, \xi)$ are
	singletons consisting of the single vector functions
	$\wh{F}(\bullet,\xi;x^{\, \prime}) \triangleq
	\left( \, \wh{F}_j(\bullet,\xi;x^{\, \prime}) \, \right)_{j=1}^{\ell_F}$
	and $\wh{G}(\bullet,\xi;x^{\, \prime}) \triangleq
	\left( \, \wh{G}_i(\bullet,\xi;x^{\, \prime}) \, \right)_{i=1}^{\ell_G}$,
	respectively, with their components
	$\wh{F}_j(\bullet,\xi;x^{\, \prime})$
	and $\wh{G}_i(\bullet,\xi;x^{\, \prime})$ being convex functions.  It is easy to
	verify conditions
	(A3$_F$) and (A3$_G$) by the smoothness assumptions of $F(\bullet,\xi)$ and
	$G(\bullet,\xi)$ and the Lipschitz continuity of their gradients.  At an accumulation
	point $x^{\infty}$ of the sequence of iterates $\{ x^{\nu} \}$ generated by the SMM algorithm,
	the limiting function $\wh{V}_{\infty}(\bullet;x^{\infty})$ is given by (cf.\
	(\ref{eq:limiting surrogate})):
	\[
	\wh{V}_{\infty}(x; x^{\infty}) \, \triangleq \, \psi\Bigg( \mathbb{E}\Bigg[ \,
	\varphi\Bigg( \, G(x,\xi) + \displaystyle{
		\frac{\kappa}{2}
	} \, \norm{x- x^{\, \infty}}^2, \,
	\mathbb{E}\left[ \, F(x,\xi) + \displaystyle{
		\frac{\kappa}{2}
	} \, \norm{x- x^{\, \infty}}^2 \, \right] \,
	\Bigg) \, \Bigg] \, \Bigg).
	\]
	Then under suitable conditions that ensure the directional differentiability of the
	objective function $\Theta$ in (\ref{eq:compound_composite_SP}) and the
	interchangeability of the directional derivatives with expectations (see \cite[Chapter~7.2.4]{shapiro2009lectures}), Theorem \ref{thm:convergence_smm} yields that any accumulation point $x^{\infty}$
	of the sequence generated by the SMM algorithm is a directional stationary point of
	\eqref{eq:compound_composite_SP} in the sense that
	$\Theta^{\, \prime}(x^{\infty};x - x^{\infty}) \geq 0$ for all
	$x \in X$, with probability 1.  As noted in \cite{PangRazaAlvarado18}, directional
	stationarity is a stationarity concept that is the strongest among all stationarity
	properties of a convex constrained optimization problem with a directionally
	differentiable objective, which the function $\Theta$ is.
	
	\subsection{The dc case with a smooth concave part}
	\label{subsec:smooth concave}
	The connection between the fixed-point property and the stationary property
	in the dc case is less straightforward.  It depends on the differentiability of
	the concave part in a dc decomposition of the functions $G_i$ and $F_j$.
	Suppose each $G_i(\bullet,\xi)$ and $F_j(\bullet,\xi)$ are dc functions given by
	\begin{equation}
	\label{eq:dc_smooth}
	G_i(x,\xi) \, = \, g_i^G(x,\xi) - h_i^G(x,\xi), \epc\mbox{and} \epc F_j(x,\xi) \, = \, g_j^F(x,\xi) - h_j^F(x,\xi),
	\end{equation}
	where $g_i^G(\bullet,\xi)$, $h_i^G(\bullet,\xi)$, $g_j^F(\bullet,\xi)$ and
	$h_j^F(\bullet,\xi)$ satisfy the assumption (DC1) in Subsection~\ref{subsec:assumptions},
	with $h_i^G(\bullet, \xi)$ and $h_j^F(\bullet, \xi)$ being additionally differentiable
	with Lipschitz gradient moduli independent of $\xi$.  The family
	${\cal G}(x^{\, \prime}, \xi)$ (and similarly,
	${\cal F}(x^{\, \prime}, \xi)$) is given by (\ref{eq:subgradient linearization}) with the single  element.
	%
	%
	Given a vector
	$x^{\infty}$, the family $\wh{\cal V}(x^{\infty})$ has only one element function
	$\wh{V}(x;x^{\infty})$ given by
	\[
	\wh{V}(x;x^{\infty}) \, \triangleq \, \psi\Bigg( \, \mathbb{E}\Bigg[ \, \varphi\Big( \,
	\wh{G}(x,\tilde{\xi};x^{\infty}), \, \mathbb{E}\left[ \,
	\wh{F}(x,\tilde{\xi};x^{\infty}) \, \right] \, \Big) \, \Bigg] \, \Bigg),
	\]
	where $\wh{G}(x,\xi;x^{\infty}) \triangleq
	\left( \, \wh{G}_i(x,\xi;x^{\infty}) \, \right)_{i=1}^{\ell_G}$ and
	$\wh{F}(x,\xi;x^{\infty}) \triangleq
	\left( \, \wh{F}_j(x,\xi;x^{\infty}) \, \right)_{j=1}^{\ell_F}$ with each
	\begin{equation} \label{eq:linearized dc}
	\left\{\begin{array}{llll}
	\wh{G}_i(x,\xi;x^{\infty}) & \triangleq &
	g_i^G(x,\xi) - h_i^G(x^{\infty},\xi) -  \nabla_x h_i^G(x^{\infty},\xi)^{\top}(
	x - x^{\infty} ),
	& i \, = \, 1, \cdots, \ell_G, \\ [0.1in]
	\wh{F}_j(x,\xi;x^{\infty}) & \triangleq &
	g_j^F(x,\xi) - h_j^F(x^{\infty},\xi) - \nabla_x h_j^F(x^{\infty},\xi)^{\top}(
	x - x^{\infty} ),
	& j \, = \, 1, \cdots, \ell_F.
	\end{array} \right. \end{equation}
	Hence, if $x^{\infty}$
	is an accumulation point of the sequence of iterates $\{ x^{\nu} \}$ produced by the
	SMM algorithm, then  $x^{\infty}$ is
	a global minimizer of the convex compound SP, $ \displaystyle{
		\operatornamewithlimits{\mbox{minimize}}_{x \in X}
	} \,\,  \wh{V}(x; x^\infty)
$.
	Under conditions that ensure the interchangeability of expectations with directional
	derivatives \cite[Theorem~7.44]{shapiro2009lectures}, it can be shown that this limit
	point is a directional stationary point of original compound SP
	\eqref{eq:compound_composite_SP} almost surely.  Again, this is the best one can hope for in a general
	dc stochastic program.
	
	\subsection{The dc case with a max-smooth concave part} \label{subsec:nonsmooth concave}
	
When the concave parts in the dc functions $G_i(x, \xi)$ and $F_i(x, \xi)$ are nonsmooth functions, surrogation function families  $\mathcal{G}(x^{\, \prime}, \xi)$ and $\mathcal{F}(x^{\, \prime}, \xi)$ are constructed  in \eqref{eq:subgradient linearization} by linearizing the concave parts using subgradients. By assuming the following uniform boundedness property, the assumptions  (A4$_\infty$) and (E$_\infty$) made at the limit point $x^{\, \infty}$ in Theorem \ref{thm:convergence_smm}  hold.
\gap

\textbf{(DC2)} $\sup \, \left\{ \,  \max\left\{ \norm{G(x^{\, \infty}, \xi)}, \norm{F(x^{\, \infty}, \xi)} \right\}: \xi \in \Xi \, \right\} < \infty$,
\gap

 \quad \qquad  \, $\sup \, \left\{ \,  \norm{ \, a_i (x^{\, \infty} , \xi ) \,}:  a_i(x^{\, \infty} , \xi ) \in \partial_x h_i^G(x^{\, \infty}, \xi) ,   \xi \in \Xi , i \in [\ell_G] \right\} < \infty$,
 \gap

\quad \qquad  \, $\sup \, \left\{ \,  \norm{ \, b_j (x^{\, \infty} , \xi ) \, }:    b_j(x^{\, \infty}, \xi )  \in \partial_x h_j^F(x^{\, \infty}, \xi) ,   \xi \in \Xi ,  j \in [\ell_F]\, \right\} < \infty$.
\gap

\noindent Under (DC2), it is straightforward to see (A4$_\infty$) is true. For the assumption (E$_\infty$), the random set
	\[\Phi_{\mathcal{G}}(x, x^{\, \infty}, \xi) = \left\{ \left( g_i^G(x^{\, \infty}, \xi) - h_i^G(x^{\, \infty}, \xi) - a^i(x^{\, \infty}, \xi)^\top (x-x^{\, \infty}) \right)_{ i \in [\ell_G]} : a^i(x^{\, \infty}, \xi) \in \partial h_i^G(x^{\, \infty}, \xi) \right\}.\]
	From Theorem 7.47 in \cite{shapiro2009lectures}, we have $\partial\, \mathbb{E}[h^G_i(x^{\, \infty}, \tilde{\xi})] = \mathbb{E}[\partial\,  h^G_i(x^{\, \infty}, \tilde{\xi})] $,  which is a nonempty, closed, convex set. Hence $\mathbb{E} \, [\, \Phi_{\mathcal{G}}(x, x^{\, \infty}, \tilde{\xi}) \, ]$ and $\mathbb{E} \, [\, \Phi_{\mathcal{F}}(x, x^{\, \infty}, \tilde{\xi}) \, ]$ are nonempty, closed and convex sets for any $x\in X$.
	\gap
	
	 As it presently stands, the SMM algorithm falls short of computing a
	 directional stationary point of the problem (\ref{eq:compound_composite_SP}) when
	 the concave part in the dc function is nonsmooth.
	  In the case, the
	fixed-point property of an accumulation point $x^{\infty}$ in
	Theorem~\ref{thm:convergence_smm} asserts that there exist
	subgradients
	$a^{\, G,i}(x^{\infty},\xi) \in \partial_x h_i^{\, G}(x^{\infty},\xi)$ and
	$a^{\, F,i}(x^{\infty},\xi) \in \partial_x h_i^{\, F}(x^{\infty},\xi)$ such that  $x^\infty$ is a global minimizer of the convex compound SP $\displaystyle{
		\operatornamewithlimits{\mbox{minimize}}_{x \in X}
	} \,\,  \wh{V}(x; x^\infty)  \triangleq \, \psi\Big( \, \mathbb{E}\Big[ \, \varphi\Big( \,
\wh{G}(x,\tilde{\xi};x^{\infty}), \, \mathbb{E}\left[ \,
\wh{F}(x,\tilde{\xi};x^{\infty})   \right]  \Big)  \Big]  \Big),$ the same conclusion as in the
last case,  except the functions $\wh{G}_i(x,\xi;x^{\infty})$ and
	$\wh{F}_i(x,\xi;x^{\infty})$ in (\ref{eq:linearized dc}) are replaced by convex surrogate functions with the subgradients
	$a^{\, G,i}(x^{\infty},\xi) $ and
	$a^{\, F,j}(x^{\infty},\xi)  $, respectively.
	Extending the notion of criticality in
	deterministic dc programming, we call this fixed-point property of $x^{\infty}$,
	i.e., $x^{\infty} \in \displaystyle{
		\operatornamewithlimits{\mbox{argmin}}
	} \, \{ \, \wh{V}(x; x^\infty): {x\in X}\} $, the
	\emph{compound criticality} of the compound SP (\ref{eq:compound_composite_SP}), and
	$x^{\infty}$ a \emph{compound critical point} of the same problem.
	
	\gap
	
	It is worth
	mentioning that the directional stationarity of the problem
	(\ref{eq:compound_composite_SP}) is a stronger concept than compound
	criticality, in the sense that a point satisfying the former property must also satisfy
	the latter one.  The reverse implication is not true, which can be seen from the example
	given in \cite[Section 8]{qi2019statistical}. This cited example is a special case of
	(\ref{eq:compound_composite_SP}) with the univariate functions
	$\psi(t) = t$, $\varphi(t) = t^2$,
   the multivariate functions	$G(x,\xi) = | x^\top \xi_{[1:n]}|- \xi_{n+1}$
	and $F(x,\xi) = 0$ for any scalar $t$ and $n$-dimensional vector $x$,
	where $\xi_{[1:n]}$ refers to the first $n$ components of a
	$(n+1)$-dimensional random vector $\xi$ and $\xi_{n+1}$ refers to its
	last component. Based on \cite[Propositions 8.1 and 8.2]{qi2019statistical}, the
	point $x=0$ is a Clarke stationary point (thus a compound critical point), but not
	a directional stationary point.
	
	\gap
	
	If the functions
	$h_i^G(\bullet)$ and $h_j^F(\bullet)$ are deterministic pointwise maxima
	of finitely many
	convex smooth functions (while $g_i^G(\bullet,\xi)$ and $g_j^F(\bullet,\xi)$
	remain random), then we may adopt the enhancement idea for deterministic
	dc programs as detailed in \cite{PangRazaAlvarado18} and develop an
	enhanced version of the SMM algorithm that may be shown to converge subsequentially to a
	directional stationary solution of (\ref{eq:compound_composite_SP}).  We provide a
	bit more details on this class of specially structured dc problems and skip the full
	development.
	The interesting feature of  the enhanced version of SMM algorithm is that
	the surrogate functions $V(\bullet,x^{\, \prime})$ is not convex, yet
	the subproblems can be
	decomposed into finitely many convex programs.  Specifically, let
	\begin{equation} \label{eq:max dc}
	G_i(x,\xi) \, = \, g_i^G(x,\xi) - \displaystyle{
		\max_{1 \leq k \leq K_G}
	} \, h_{i, k}^G(x) \epc \mbox{and} \epc
	F_j(x,\xi) \, = \, g_j^F(x,\xi) - \displaystyle{
		\max_{1 \leq k \leq K_F}
	} \, h_{j, k}^F(x),
	\end{equation}
	where $g_i^G(\bullet,\xi)$, $h_{i, k}^G$ $g_j^F(\bullet,\xi)$ and $h_{j, k}^F$ are
	all convex functions with $h_{i, k}^G$ and $h_{j, k}^F$ being additionally
	differentiable with Lipschitz gradients.
	The pointwise maxima in the above functions explains the terminology ``max-smooth''
	in the heading of this subsection.
	For any $\varepsilon > 0$ and any $x \in X$, let
	{
		$${\cal A}_i^{G;\varepsilon}(x)  \triangleq  \left\{ \, k \, \mid \,
		h_{i, k}^G(x) \, \geq \, \displaystyle{
			\max_{1 \leq k^{\, \prime} \leq K_G}
		} \, h_{i, k^{\prime}}^G(x) - \varepsilon \, \right\}, \; 
		{\cal A}_j^{F;\varepsilon}(x)  \triangleq  \left\{ \, k \, \mid \,
		h_{j, k}^F(x) \, \geq \, \displaystyle{
			\max_{1 \leq k^{\, \prime} \leq K_F}
		} \, h_{j, k^{\prime}}^F(x) - \varepsilon \, \right\}.
		$$}
	
	For a given $x^{\, \prime} \in X$ and with two families of i.i.d. samples
	$\{ \xi^{\, t} \}_{t=1}^{N_{\nu}}$ and
	$\{ \eta^{\, s} \}_{s=1}^{N_{\nu}}$ specified as in the SMM algorithm, and define
	\[
		\wt{V}_{N_{\nu}}^{\, \varepsilon}(x; x^{\, \prime}) \, \triangleq \, \psi\Bigg( \,
		\displaystyle{
			\frac{1}{N_{\nu}}
		} \, \displaystyle{
			\sum_{t=1}^{N_{\nu}}
		} \, \varphi\Bigg( \, \wt{G}^{\, \varepsilon}(x,\xi^{\, t}; x^{\, \prime}), \,
		\displaystyle{
			\frac{1}{N_{\nu}}
		} \, \displaystyle{
			\sum_{s=1}^{N_{\nu}}
		} \, \wt{F}^{\, \varepsilon}(x,\eta^{\, s};x^{\, \prime}) \Bigg) \, \Bigg),
		\]
	where $\wt{G}^{\, \varepsilon}(x,\xi;x^{\, \prime}) \triangleq
	\left( \, \wt{G}_i^{\, \varepsilon}(x,\xi;x^{\, \prime}) \, \right)_{i=1}^{\ell_G}$ and
	$\wt{F}^{\, \varepsilon}(x,\xi;x^{\, \prime}) \triangleq
	\left( \, \wt{F}_j^{\, \varepsilon}(x,\xi;x^{\, \prime}) \, \right)_{j=1}^{\ell_F}$
	whose components are given by
	\begin{equation} \label{eq:max linearized dc}
	\wt{G}_i^{\, \varepsilon}(x,\xi;x^{\, \prime})  \, \, \triangleq   \displaystyle{
		\min_{k_i \in {\cal A}_i^{G;\varepsilon}(x^{\, \prime})}
	} \, \wh{G}_{i, k_i}(x,\xi;x^{\, \prime}), \quad
	\wt{F}_j^{\, \varepsilon}(x,\xi;x^{\, \prime})  \, \, \triangleq    \displaystyle{
		\min_{k_j^{\, \prime} \in {\cal A}_j^{F;\varepsilon}(x^{\, \prime})}
	} \, \wh{F}_{j, k_j^{\, \prime}}(x,\xi;x^{\, \prime}),    \end{equation}
	where \[
	\begin{array}{ll}\wh{G}_{i,k_i}(x,\xi;x^{\, \prime}) \triangleq
	g_i^G(x,\xi) -  h_{i,k_i}^G(x^{\, \prime}) -
	\nabla h_{i,k_i}^G(x^{\, \prime})^{\top}( x - x^{\, \prime} ),\\[0.1in] \wh{F}_{j,k_j^{\, \prime}}(x,\xi;x^{\, \prime}) \triangleq
	g_j^F(x,\xi) -  h_{j,k_j^{\, \prime}}^F(x^{\, \prime}) -
	\nabla h_{j,k_j^{\, \prime}}^F(x^{\, \prime})^{\top}( x - x^{\, \prime} ).
	\end{array}\]
	The functions
	$\wt{G}_{i}^{\, \varepsilon}(\bullet, \xi ;x^{\, \prime})$ and $\wt{F}_{j}^{\, \varepsilon}(\bullet, \xi;x^{\, \prime})$ are no longer convex;
	nevertheless, the optimization problem,   ${\displaystyle{
			\operatornamewithlimits{\mbox{minimize}}_{x \, \in \, X}
		} \left\{ 		
		\wh{V}_{N_{\nu}}^{\, \varepsilon}(x; x^\nu) + \displaystyle{
			\frac{1}{2 \rho}
		} \, \| x -x^{\nu}\|^2 \right\}}  $ is practically implementable as it is equivelant to
	
	 \[ \begin{array}{l}
		{\displaystyle{
				\operatornamewithlimits{\mbox{minimize}}_{
					{ \left\{ \begin{array}{ll}
						k_i \, \in \, {\cal A}_i^{G;\varepsilon}(x^{\, \nu}),\,  i \in [\ell_G]\\
						k_j^{\, \prime} \, \in \, {\cal A}_j^{F;\varepsilon}(x^{\, \nu}), \,  j \in [\ell_F]
						\end{array} \right\}} }}
		}    \quad    \underbrace{\displaystyle{
				\operatornamewithlimits{\mbox{minimize}}_{x \, \in \, X}
			} \ \left\{ 	
			\wh{V}_{N_{\nu}}^{k,k^{\, \prime}}(x; x^\nu) + \displaystyle{
				\frac{1}{2 \rho}
			} \, \| x -x^{\nu}\|^2  \right\}}_{\mbox{a convex program, one for each
				pair $(k,k^{\, \prime}$)}}   ,
		\end{array} \]	
	{where}
	\[
	\begin{array}{cl}
	\wh{V}_{N_{\nu}}^{k,k^{\, \prime}}(x; x^{\, \prime}) \, &
	\triangleq \, \psi\left( \,
	\displaystyle{
		\frac{1}{N_{\nu}}
	} \, \displaystyle{
		\sum_{t=1}^{N_{\nu}}
	} \, \varphi\left( \, \wh{G}^k(x,\xi^{\, t}; x^{\, \prime}), \, \displaystyle{
		\frac{1}{N_{\nu}}
	} \, \displaystyle{
		\sum_{s=1}^{N_{\nu}}
	} \, \wh{F}^{k^{\, \prime}}(x,\eta^{\, s};x^{\, \prime}) \right) \, \right),\\[0.2in]
	\wh{G}^k(x,\xi;x^{\, \prime}) &\triangleq
	\left( \, \wh{G}_{i, k_i}(x,\xi;x^{\, \prime}) \, \right)_{i=1}^{\ell_G},\quad
	\wh{F}^{k^{\, \prime}}(x,\xi;x^{\, \prime}) \triangleq
	\left( \, \wh{F}_{j, k_j^{\, \prime}}(x,\xi;x^{\, \prime}) \, \right)_{j=1}^{\ell_F}.
	\end{array}
	\]
	In this case, the limiting surrogate function at an accumulation point $x^{\, \infty}$
	is
	\[
	\wh{V}_{\infty}^{\, \varepsilon}(x;x^{\infty}) \, \triangleq \,
	\psi\left( \, \mathbb{E}\left[ \, \varphi\left( \,
	\wt{G}^{\, \varepsilon}(x,\tilde{\xi};x^{\infty}), \, \mathbb{E}\left[ \,
	\wt{F}^{\, \varepsilon}(x,\tilde{\xi};x^{\infty}) \, \right] \, \right) \, \right]
	\, \right).
	\]
	Under the technical assumptions stipulated in
	Corollary~\ref{co:convergence without convexity} and the interchangeability of
	directional derivatives with expectations, it can be shown that the accumulation point
	$x^{\infty}$ satisfies that for any
	$\varepsilon^{\, \prime} \in [ \, 0,\varepsilon \, )$,
	$
	\left( \, \wh{V}_{\infty}^{\, \varepsilon^{\, \prime}}(\bullet;x^{\infty}) \,
	\right)^{\prime}(x^{\, \infty};x - x^{\, \infty}) \, \geq \, 0$ for all
	$x \, \in \, X$ (see \cite{PangRazaAlvarado18}).
	In particular, with $\varepsilon^{\, \prime} = 0$, it follows that every limit point
	of the SMM algorithm sequence
	satisfies the directional stationarity property of the given compound
	SP (\ref{eq:compound_composite_SP}).
	
	\gap
	
	With the above strong conclusion, it is natural to ask the question whether this
	conclusion can be extended to the case where the functions $h_{ik}^G$ are $h_{jk}^F$ are
	random.  The challenge of this case is that  both index sets
	${\cal A}_i^{G;\varepsilon}$ and
	${\cal A}_j^{F;\varepsilon}$ will be dependent on the realizations of random variable
	$\tilde{\xi}$,
	and significantly complicate the proof; further analysis of the behavior of the
	index sets ${\cal A}_i^{G;\varepsilon}(x,\xi)$ and
	${\cal A}_j^{F;\varepsilon}(x,\xi)$ is needed.  In this situation, the analysis
	in the paper \cite{qi2019statistical} may prove useful.  A detailed treatment of this
	extended case is left for a future study.
	
	\section{Post-Convergence Analysis: Error Bounds}
	\label{sec:error_bounds}
	
{In stochastic programming, it is important to assess  the quality of the candidate solutions produced by the iterative algorithms via a validation analysis.
The existing approaches for assessing the solution quality in SP include bounding the optimality gap \cite{bayraksan2006assessing,liu2020asymptotic}, and testing the
Karush-Kuhn-Tuckers conditions \cite{higle1991statistical,shapiro2009lectures}, as well as optimality functions \cite{royset2012optimality}.
The optimality-gap method is limited to convex problems, since it involves the availability of global minima.  The latter two methods are applicable to nonconvex SPs,
but require continuous differentiability.  Thus, these
previous results are not applicable to coupled nonsmooth and nonconvex problems; indeed, a practical validation analysis should test the stationarity of a candidate solution
with the type of stationarity dependent on the algorithm and the problem structure together.} For instance, for the compound SP
(\ref{eq:compound_composite_SP}) in which the component functions $G$ and $F$ are dc functions with max-smooth concave parts, the SMM Algorithm
asymptotically computes a compound critical point with probability 1,  whereas
the enhanced version of the SMM Algorithm could	asymptotically compute a directional stationary solution of the same problem with probability 1.
{In short, the issues of validation analysis and the related theory of error bounds for nonconvex and nondifferentiable stochastic programs is
a minimally explored subject at best; our analysis provides a first attempt to understand this important subject that requires further study.}
	
\gap
	
Specifically, in this section, we wish to assess solution quality within the SMM algorithm by the means of establishing a {\it posteriori error bound} with a
computable residual function, such that we are able to affirmatively assert whether an iterate point is such
an approximate ``stationary" point.  The derivation of such an error bound is the main
focus of this section.  We accomplish this in two steps. First, for iterates of the
Majorization-Minimization (MM) algorithm for solving a deterministic nonconvex
nonsmooth optimization problem with a locally Lipschitz continuous objective function and
convex constraints, we provide a surrogation-based error bound for the distance of
the test point to the set of Clarke stationarity points with a computable residual function.
Second, the discussion is extended to the SMM Algorithm for the
compound SP (\ref{eq:compound_composite_SP}) by means of a probabilistic surrogation-based global error bound.  We should note that
while there is an extensive literature on
the theory of error bounds \cite{drusvyatskiy2018error,pang1997error, facchinei2007finite,Royset20} for deterministic optimization problems,
the main departure of our results from this literature is that we aim at addressing the family of surrogation-based algorithms, of which the SMM algorithm
is an example.  In particular, we quantify the distance to stationarity in terms of the deviation of the
fixed-point property of the surrogation and sampling-based algorithmic map.

\gap

{In general, error bound results such as those in Theorems \ref{th:error bound}, \ref{th:pointwise error bound}, \ref{th:probabilistic eb} below provide
conceptual guidance to the design of termination rule of algorithms.
They typically depend on constants that are conceptually available but may not be practically so.   These results allow one to say, with theoretical
support, something like:
``under such a termination criterion, one can expect that the computed solution is a good approximation of an otherwise un-computable solution.''}
	
\subsection{A detour:  deterministic problems with an MM algorithm}
	
Consider the following optimization problem:
	\begin{equation} \label{eq:general optimization}
	\displaystyle{
		\operatornamewithlimits{\mbox{minimize}}_{x \in X}
	} \ \theta(x),
	\end{equation}
	where $X$ is a closed convex set in $\mathbb{R}^n$ and $\theta$ is a locally Lipschitz
	continuous function on $X$ so that the Clarke subdifferential
	$\partial_{\rm C} \, \theta(x)$ is well defined at any $x \in X$.  Assume the following
	three conditions on the family of surrogation (SR) functions
	$\left\{ \, \wh{V}(\bullet; x^{\, \prime}) \, \right\}_{x^{\, \prime} \in X}$:
	
	\gap
	
	{\bf (SR1)}  for any $x^{\, \prime} \in X$, the objective $\theta$ admits a
	convex majorant $\wh{V}(\bullet;x^{\, \prime})$ on $X$ satisfying:
	
	\gap
	
	(i) $\wh{V}(x^{\, \prime}; x^{\, \prime}) = \theta(x^{\, \prime})$
	(ii) $\wh{V}(x; x^{\, \prime}) \geq \theta(x)$ for all $x  \in X$; and
	(iii) a positive scalar $\kappa$ independent of $x^{\, \prime}$ exists such that:
	\begin{equation} \label{eq:the other inequality}
	\theta(x) + \displaystyle{
		\frac{\kappa}{2}
	} \, \| \, x - x^{\, \prime} \, \|^2 \, \geq \, \wh{V}(x;x^{\, \prime}),
	\epc \forall \, x \, \in \, X.
	\end{equation}

{The constant $\kappa$ can be chosen as
\begin{equation} \label{eq:kappa and surrogate}
\kappa \, = \, \displaystyle{
\sup_{\substack{x \neq x^{\, \prime} \\
x, x^{\, \prime} \in X}}
} \ \displaystyle{
\frac{2 \left( \, \wh{V}(x;x^{\, \prime}) - \theta(x) \, \right)}{\| \, x - x^{\, \prime} \, \|^2}
}
\end{equation}
provided that the supremum is finite.  Thus $\kappa$ is an upper bound of the gap of majorization (i.e., the numerator) normalized by
the half squared distance between $x$ and the base vector $x^{\prime}$ of the majorization function $\wh{V}(\bullet,x^{\, \prime})$
(i.e., the denominator).   The assumption (SR1)
essentially postulates that such a relative gap of majorization is finite based on which a majorization-based error bound theory is developed.
This relative gap $\kappa$ is a quantitative measure of the tightness of the majorization: the tighter the majorization, the smaller $\kappa$ is.
The supremum is taken over all $x \neq x^{\, \prime}$ in order
to accommodate the analysis below that deals with all possible base vectors $x^{\, \prime}$ and arbitrary $x \in X$.
The role of the constant $\kappa$ is made explicit in the
error estimates obtained in Theorems~\ref{th:error bound} and \ref{th:pointwise error bound}.}

	%
	%
	For a given but arbitrary scalar
	$\rho > 0$, let $\wt{V}_{\rho}(x;x^{\, \prime}) \triangleq \wh{V}(x;x^{\, \prime})
	+ \displaystyle{
		\frac{1}{2 \,\rho}
	} \, \| \, x - x^{\, \prime} \, \|^2$.  The majorization-minimization (MM) algorithm
	for solving the problem (\ref{eq:general optimization}) generates a sequence
	$\{ x^{\nu} \}$ by the fixed-point iteration:
	$x^{\nu+1} = {\cal M}_{\wt{V}_{\rho}}(x^{\, \nu})$ applied to
	the map
	\[
	{\cal M}_{\wt{V}_{\rho}} : x^{\, \prime} \mapsto \displaystyle{
		\operatornamewithlimits{\mbox{argmin}}_{x \in X}
	} \ \wt{V}_{\rho}(x;x^{\, \prime}).
	\]
	%
	%
	%
	A natural question to ask about the fixed-point iterations of the map
	${\cal M}_{\wt{V}_{\rho}}$ is when to terminate
	the iterations so that a current iterate $x^{\nu}$ can be certified to be
	``asymptotically stationary'' for the
	problem (\ref{eq:general optimization}).   Let
	$S_{X,\theta}^{\, C}$ be
	the set of \emph{C(larke) stationary points} of the problem
	(\ref{eq:general optimization}); i.e.,
	\[
	S_{X,\theta}^{\, C} \, \triangleq \, \left\{ \, x \, \in \, X \, \mid \,
	0 \, \in \, \partial_C \,  \theta(x) + {\cal N}(x;X) \, \right\},
	\]
	where ${\cal N}(x;X)$ is the normal cone of $X$ at $x \in X$.
	For a given $\wh{x} \in X$, the (Euclidean) distance from $\wh{x}$ to
	$S_{X,\theta}^{\, C}$,
	denoted $\mbox{dist}(\wh{x};S_{X,\theta}^{\, C})$, is a conceptual measure of
	stationarity of $\wh{x}$ in the sense that this distance is equal to zero if and only
	if $\wh{x}$ is C-stationary.  An alternative measure is
	$r_{X,\theta}^C(\wh{x}) \, \triangleq \mbox{dist}(0; \partial_C \,  \theta(\wh{x}) +
	{\cal N}(\wh{x};X))$, which is the (Euclidean) distance from the origin to the set
	$\partial_C \, \theta(\wh{x}) + {\cal N}(\wh{x};X)$.    Note that
	these two measures are related in the following sense,
	\begin{align}
	\label{eq:dist_equivalence}
	\mbox{dist}(\wh{x};S_{X,\theta}^{\, C}) \, = \, 0 \ \Leftrightarrow \
	\,
	r_{X, \theta}^C(\wh{x}) \, = \, 0,
	\end{align}
	which means that either distance can serve as a residual function for the other.
	Yet, it is not easy to compute the distance measure $r_{X, \theta}^C(\wh{x})$  when $\theta$
	is the composition of nonsmooth functions. 
	Tailored to the MM algorithm, we are interested in establishing an upper bound of
	$\mbox{dist}(\wh{x}; S_{X, \theta}^C)$ by the computable residual
	$r_{\wt{V}_{\rho}}(\wh{x}) \, \triangleq \,
	\left\| \, \wh{x} - {\cal M}_{\wt{V}_{\rho}}(\wh{x}) \, \right\|$ for any $\wh{x}$ of interest.
	We consider two types of error bound conditions, namely local error bound and
	metric subregularity, under which the computable residual $r_{\wt{V}_\rho}(\wh{x})$
	can be used to measure the asymptotic stationarity of a test point globally or locally,
	respectively.
	
	\subsubsection{Local error bounds}
	
	An error bound property extends the above equivalence \eqref{eq:dist_equivalence}
	to an inequality between the two distances when they are not zero.
	Specifically, we say that a \emph{local error bound} (LEB) holds
	for the set $S_{X,\theta}^{\, C}$ with $r_{X,\theta}^C(x)$
	as the residual on $X$ if
	
	\gap
	
	\textbf{(LEB)} there exist positive constants
	$\eta$ and $\varepsilon$ such that
	\begin{equation} \label{eq:local error bound}
	r_{X,\theta}^{\, C}(x) \, \leq \, \varepsilon \ \Rightarrow \
	\mbox{dist}(x;S_{X,\theta}^{\, C}) \, \leq \, \eta \,
	\left[ \, r_{X,\theta}^C(x) \, \right], \epc \forall \, x \, \in \, X.
	\end{equation}
	We refer to
	\cite[Chapter~6]{facchinei2007finite} for some basic discussion about error
	bounds.  In the case when $\theta$ is continuously differentiable so that
	$\partial_C\, \theta(x)$ consists of the gradient $\nabla \theta(x)$ only, the stationarity
	condition of (\ref{eq:general optimization}) becomes a variational inequality (VI)
	defined by the pair $(X,\nabla \theta)$ and the residual $r_{X,\theta}^{\, C}(x)$ is the so-called \emph{normal residual} of the VI.  It
	is shown in the reference \cite[Proposition 6.2.1]{facchinei2007finite}
	that in this differentiable case,
	the error bound (\ref{eq:local error bound}) is intimately
	related to the ``semistability'' of the VI on hand.  Generalization of this connection
	to the nondifferentiable case is outside
	the scope of this paper and deserves a full investigation elsewhere.  By relating
	the two quantities under the LEB condition, one can obtain
	an inequality bounding the distance $\mbox{dist}({x}; S_{X, \theta}^C)$ in terms of
	the computable residual $r_{\wt{V}_{\rho}}(x)$.  Specifically, we are
	interested in establishing the existence of a global and computable error bound:
	\gap
	
	
	
	\textbf{(S-GEB$_\rho$)} for every $\rho > 0$, there exists a positive constant
	$\wh{\eta}$ such that
	\begin{equation} \label{eq:error bound}
	\mbox{dist}\left(x;S_{X,\theta}^{\, C} \right) \, \leq \, \wh{\eta} \, \,
	r_{\wt{V}_{\rho}}(x), \epc \mbox{for all $x \in X$ of interest}.
	\end{equation}
	The above inequality is a surrogation-based global error bound  (S-GEB)
	for Clarke stationarity of the problem (\ref{eq:general optimization}) with the
	computable quantity $r_{\wt{V}_{\rho}}(x)$ as the residual.
	Informally, this error bound suggests that the
	smaller this residual is (i.e., the closer $\wh{x}$ is to satisfying the
	fixed-point property of the map $M_{\wt{V}_{\rho}}$), the smaller the left-hand
	side will be; (i.e., the closer $\wh{x}$ is to the set of
	C-stationary solutions of (\ref{eq:general optimization})).  Thus
	(\ref{eq:error bound}) provides a theoretical basis for the
	use of $r_{\wt{V}_{\rho}}(x)$    as
	a practical termination rule for the surrogation-based algorithm.  Note that a
	necessary condition for (\ref{eq:error bound}) to hold is that
	
	\gap
	
	{\bf (SR2)} $\mbox{FIX}( {\cal M}_{\wh{V}} ) \subseteq
	S_{X,\theta}^{\, C}$, where
	$\mbox{FIX}({\cal M}_{\wh{V}}) \triangleq \{ x \in X: x = {\cal M}_{\wh{V}}(x)\}$;
	this is equivalent to assuming $\mbox{FIX}( {\cal M}_{\wt{V}_{\rho}} ) \subseteq
	S_{X,\theta}^{\, C}$ for any $\rho > 0$.
	
	\gap
	
	Assumption (SR2), depending on the choice of surrogation functions, amounts to
	stipulating that the quantity $r_{\wt{V}_{\rho}}(x)$ is a legitimate residual for
	the set $S_{X,\theta}^{\, C}$ for any $\rho > 0$.  Under this assumption,
	we establish the surrogation-based global error bound (\ref{eq:error bound})
	via two steps.  The first step (Lemma~\ref{lm:stationarity_distance}) is to apply
	Ekeland's variational principle to obtain some
	preliminary bounds of an auxiliary vector $\bar{x}$ derived from $x$.  The proof
	of this step,  which is similar to the argument employed in \cite{drusvyatskiy2018error} is given in Appendix.
	The second
	step is to invoke the (LEB) assumption on the set $S_{X,\theta}^{\, C}$
	to deduce the bound (\ref{eq:error bound}) when the right-hand side is sufficiently
	small, which is further extended to  hold for all vectors $x$ in the compact set $X$
	(or in the case when $X$ is not assumed compact, to any compact subset $T$ of $X$)
	from  \cite[Proposition~6.1.3]{facchinei2007finite}.  It is worth mentioning that in the
	proof below, the Clarke subdifferential is used for convenience and can be replaced by
	any suitable subdifferential for which the condition (SR2) holds and a first-order
	optimality condition holds for the optimization
	problem  to which Ekeland's variational principle is applied.  The
	significance of this remark is that in the setting of the subsequent
	Theorem~\ref{th:error bound}, the error bound (\ref{eq:error bound}) holds for any
	suitable subdifferential of $\theta$ that contains $\partial_C \theta(x)$ as a subset.
	For instance, if $\theta = g - h$ is a dc function with $g$ and $h$ both being
	convex, then the inequality (\ref{eq:error bound}) can be replaced by
	\[
	\mbox{dist}\left(x;S_{X, \theta}^{DC}
	\right) \, \leq \, \wh{\eta} \,
	r_{\wt{V}_{\rho}}(x), \, \mbox{ for all } x \in X \mbox{ of interest, }
	\]
	where $S_{X,\theta}^{DC}= \{x\in X: \, 0 \in  \partial g(x) - \partial h(x) +
	{\cal N}(x;X) \}$,
	which establishes an error bound for the set of critical points of $\theta$ on $X$.
	Throughout the discussion below, we continue to employ the Clarke-subdifferential but
	emphasize that a replacement of this subdifferential by other appropriate
	subdifferentials may be warranted for certain classes of objective functions $\theta$.
	
	\begin{lemma}[Ekeland's variational principle] \label{lm:eke} \rm
		Let $f: X \subseteq \mathbb{R}^n \to \mathbb{R}$ be a lower semicontinuous function that
		is bounded from below on a closed convex set $X$.  Suppose that for some scalar
		$\varepsilon >0$ and some vector $\wh{x} \in X$, we have
		$f(\wh{x}) \leq \inf_{x \in X} f(x) + \varepsilon$.  Then for any $\lambda >0$,
		there exists a point $\bar{x}$ satisfying: (i) $f(\bar{x}) \leq f(\wh{x})$;
		(ii) $\norm{\bar{x}- \wh{x}} \leq \varepsilon/\lambda$; and
		(iii) $\bar{x} = \displaystyle{
			\operatornamewithlimits{\mbox{argmin}}_{x\in X}
		} \, \{ \, f(x) + \lambda \norm{x- \wh{x}} \, \}$.
		
	\end{lemma}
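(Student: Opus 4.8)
The plan is to reduce the statement to the elementary existence of a minimizer of a single perturbed objective. First I would define $g(x) \triangleq f(x) + \lambda \, \norm{x - \wh{x}}$ on $X$ and record its two basic properties. It is lower semicontinuous, being the sum of the lower semicontinuous $f$ and the continuous map $x \mapsto \lambda\,\norm{x - \wh{x}}$; and it is bounded below by $\inf_{x \in X} f(x)$, which is a finite number since $f$ is bounded below. The structural observation that drives everything is that $g$ is \emph{coercive}: because $f(x) \geq \inf_{x\in X} f(x)$ for every $x$, the sublevel set $\{x \in X : g(x) \leq c\}$ is contained in the ball $\{x : \norm{x - \wh{x}} \leq (c - \inf_{x\in X} f(x))/\lambda\}$, hence is bounded.

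The first main step is then to produce a global minimizer $\bar{x}$ of $g$ on $X$. I would take a minimizing sequence $\{x^k\} \subseteq X$ with $g(x^k) \to \inf_{x \in X} g(x)$; coercivity forces $\{x^k\}$ to be bounded, so a subsequence converges to some $\bar{x}$, and closedness of $X$ gives $\bar{x} \in X$, while lower semicontinuity yields $g(\bar{x}) \leq \liminf_k g(x^k) = \inf_{x\in X} g(x)$. Thus $\bar{x}$ attains the infimum of $g$ over $X$, which is exactly conclusion (iii). (When $X$ is compact, as under assumption (A1), this existence step is immediate and the coercivity argument can be skipped.)

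It remains to read off (i) and (ii) from the single comparison of $\bar{x}$ with the admissible point $\wh{x}$. Since $\bar{x}$ minimizes $g$ and $g(\wh{x}) = f(\wh{x})$, I have $f(\bar{x}) + \lambda\, \norm{\bar{x} - \wh{x}} = g(\bar{x}) \leq g(\wh{x}) = f(\wh{x})$. Dropping the nonnegative term $\lambda\,\norm{\bar{x} - \wh{x}}$ gives $f(\bar{x}) \leq f(\wh{x})$, which is (i). Isolating that same term instead gives $\lambda\, \norm{\bar{x} - \wh{x}} \leq f(\wh{x}) - f(\bar{x}) \leq f(\wh{x}) - \inf_{x\in X} f(x) \leq \varepsilon$, where the middle inequality uses $f(\bar{x}) \geq \inf_{x\in X} f(x)$ and the last is the hypothesis $f(\wh{x}) \leq \inf_{x\in X} f(x) + \varepsilon$; dividing by $\lambda$ produces (ii).

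I do not expect a serious obstacle here: the only point requiring care is the existence of the minimizer $\bar{x}$ in the possibly non-compact setting, and this is precisely where coercivity—a direct consequence of $f$ being bounded below—combines with closedness of $X$ and lower semicontinuity of $g$. I would also remark that the form stated here, with the perturbation $\lambda\,\norm{x - \wh{x}}$ centered at the given point $\wh{x}$, is weaker than the classical Ekeland principle in which the perturbation $\lambda\,\norm{x - \bar{x}}$ is centered at the produced point itself; the stronger self-centered form would require a monotone (Zorn-type) construction, but it is not needed for the assertion as written.
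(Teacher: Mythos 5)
The paper gives no proof of this lemma: it is quoted as the classical Ekeland variational principle (compare \cite{drusvyatskiy2018error} and standard references), so there is no in-paper argument to compare yours against. Taken on its own terms, your proof of the statement \emph{as literally written} is sound: $g \triangleq f + \lambda\,\norm{\bullet - \wh{x}}$ is lsc and has bounded sublevel sets because $f$ is bounded below, a minimizing sequence therefore has a cluster point $\bar{x}$ in the closed set $X$, and the single comparison $g(\bar{x}) \leq g(\wh{x}) = f(\wh{x})$ delivers (i) and (ii). (A pedantic caveat: under the paper's convention that ``$\bar{x} = \mbox{argmin}$'' denotes a singleton argmin, your $\bar{x}$ is only \emph{a} minimizer of $g$, not necessarily the unique one.)

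The genuine problem is the one you flagged and then dismissed in your last sentence. Conclusion (iii) as printed, with the penalty centered at $\wh{x}$, is almost certainly a typo: in the proof of Lemma~\ref{lm:stationarity_distance} the lemma is invoked to produce $\bar{x} = \operatorname{argmin}_{x\in X}\{\zeta_\rho(x;\wh{x}) + \lambda\norm{x - \bar{x}}\}$ --- penalty centered at the \emph{output} point $\bar{x}$ --- and it is this self-centered form that feeds the first-order optimality condition $0 \in \partial_C\,\theta(\bar{x}) + (\rho^{-1}+\kappa)(\bar{x}-\wh{x}) + \lambda\,\mathbb{B} + \mathcal{N}(\bar{x};X)$, since only a perturbation centered at $\bar{x}$ contributes the ball $\lambda\,\mathbb{B}$ to the subdifferential at $\bar{x}$. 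So the statement you proved is not the one the paper uses, and your remark that the self-centered form ``is not needed'' is exactly backwards relative to the downstream application. Fortunately your construction is salvageable without any Zorn-type iteration: since $\bar{x}$ globally minimizes $f + \lambda\,\norm{\bullet - \wh{x}}$ on $X$, the reverse triangle inequality $\norm{\bar{x}-\wh{x}} - \norm{x-\wh{x}} \geq -\norm{x-\bar{x}}$ gives $f(x) + \lambda\,\norm{x-\bar{x}} \geq f(\bar{x})$ for all $x \in X$, i.e., $\bar{x}$ is also a (non-strict) minimizer of $f + \lambda\,\norm{\bullet - \bar{x}}$, which is all the optimality-condition step in Lemma~\ref{lm:stationarity_distance} requires. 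Adding that one line would make your argument fit for purpose; without it, the proof establishes a conclusion that does not support the paper's use of the lemma.
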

	
	\begin{lemma} 
		\label{lm:stationarity_distance} \rm
		Let $X$ be a compact convex set in $\mathbb{R}^n$ and $\theta$ be a locally Lipschitz
		continuous function on $X$.  Suppose the conditions (SR1) and (SR2) hold for the optimization
		problem \eqref{eq:general optimization} and the surrogate function $\wh{V}$.
		For every vector $\wh{x} \in X$ and every scalar $\rho > 0$, there exists
		$\bar{x} \in X$, dependent on $\rho$ and $\wh{x}$, such that
		\[
		\norm{\bar{x}- \wh{x}} \, \leq \, 2r_{\wt{V}_\rho}(\wh{x}), \epc \mbox{and} \epc
		r_{X,\theta}^C(\bar{x}) = \mbox{dist}(0; \partial_C \, \theta(\bar{x}) +
		\mathcal{N}(\bar{x}; X)) \, \leq \, \left( \, \displaystyle{
			\frac{2}{\rho}
		} + \displaystyle{
			\frac{5 \, \kappa}{2}
		} \, \right) \, r_{\wt{V}_\rho}(\wh{x}).\]
	\end{lemma}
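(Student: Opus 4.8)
The plan is to combine the strong-convexity optimality of the proximal surrogate subproblem with a \emph{localized} application of Ekeland's variational principle (Lemma~\ref{lm:eke}). Write $x^+ := \mathcal{M}_{\wt{V}_\rho}(\wh{x})$ and $r := r_{\wt{V}_\rho}(\wh{x}) = \|\wh{x} - x^+\|$. If $r = 0$, then $\wh{x}$ is a fixed point of $\mathcal{M}_{\wt{V}_\rho}$, so (SR2) gives $\wh{x} \in S_{X,\theta}^{\,C}$ and both inequalities hold trivially with $\bar{x} = \wh{x}$; hence I assume $r > 0$ throughout.

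First I would record the elementary consequences of (SR1). Since $\wt{V}_\rho(\bullet;\wh{x}) = \wh{V}(\bullet;\wh{x}) + \tfrac{1}{2\rho}\|\bullet - \wh{x}\|^2$ is minimized over $X$ at $x^+$, and $B := \overline{\mathbb{B}}(\wh{x}; 2r)\cap X$ is a closed convex subset of $X$ containing $x^+$ (as $\|x^+-\wh{x}\|=r\le 2r$), I have $\wt{V}_\rho(y;\wh{x}) \ge \wt{V}_\rho(x^+;\wh{x})$ for every $y \in B$. Combining the (SR1) bounds $\wh{V}(y;\wh{x}) \le \theta(y) + \tfrac{\kappa}{2}\|y-\wh{x}\|^2$ and $\wh{V}(x^+;\wh{x})\ge\theta(x^+)$ with $\|y-\wh{x}\|\le 2r$, a short computation yields, for all $y \in B$,
\[
\theta(y) \; \ge \; \theta(x^+) - \Big(\tfrac{3}{2\rho} + 2\kappa\Big) r^2 ,
\]
i.e. $\theta(x^+) \le \inf_{y\in B}\theta(y) + \varepsilon$ with $\varepsilon := (\tfrac{3}{2\rho} + 2\kappa)r^2$. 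This localization to the ball of radius $2r$ is the crucial device: a global comparison over all of $X$ would only produce a gap proportional to the uncontrolled distance from $\wh{x}$ to the minimizer of $\theta$, whereas restricting to $B$ forces the gap to be $\mbox{O}(r^2)$.

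Next I would apply Ekeland's principle (Lemma~\ref{lm:eke}) to $\theta$ — continuous, hence lower semicontinuous and bounded below on the compact set $B$ — taking the approximate minimizer to be $x^+$, the tolerance $\varepsilon$ above, and $\lambda := (\tfrac{2}{\rho} + \tfrac{5\kappa}{2})\,r$. This produces $\bar{x} \in B$ with $\theta(\bar{x})\le\theta(x^+)$, with $\|\bar{x} - x^+\| \le \varepsilon/\lambda$, and with $\bar{x}$ minimizing $\theta(\bullet) + \lambda\|\bullet - x^+\|$ over $B$. Because $\tfrac{3}{2\rho}+2\kappa < \tfrac{2}{\rho}+\tfrac{5\kappa}{2}$, I get $\varepsilon/\lambda < r$, hence $\|\bar{x} - \wh{x}\| \le \|\bar{x} - x^+\| + \|x^+ - \wh{x}\| < 2r$; this establishes the first bound $\|\bar{x} - \wh{x}\|\le 2r$ and, simultaneously, shows that $\bar{x}$ lies in the \emph{interior} of $\mathbb{B}(\wh{x};2r)$. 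Finally I would convert the Ekeland optimality into the stationarity estimate: since the ball constraint is inactive at $\bar{x}$, locally $B$ coincides with $X$, so $\mathcal{N}(\bar{x}; B) = \mathcal{N}(\bar{x}; X)$. Writing the first-order necessary condition for the last minimization and using the Clarke sum rule $\partial_C(\theta + \lambda\|\bullet - x^+\|)(\bar{x}) \subseteq \partial_C\theta(\bar{x}) + \lambda\,\mathbb{B}$, I obtain $0 \in \partial_C\theta(\bar{x}) + \lambda\,\mathbb{B} + \mathcal{N}(\bar{x}; X)$, whence
\[
r_{X,\theta}^{C}(\bar{x}) = \mbox{dist}\big(0;\partial_C\theta(\bar{x}) + \mathcal{N}(\bar{x};X)\big) \; \le \; \lambda \; = \; \Big(\tfrac{2}{\rho} + \tfrac{5\kappa}{2}\Big) r .
\]

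The step I expect to be the main obstacle is the calibration in the second and third paragraphs: choosing the localization radius $2r$ so that the gap is the sharp $\varepsilon=\mbox{O}(r^2)$ while guaranteeing, via the Ekeland displacement bound $\varepsilon/\lambda$, that $\bar{x}$ stays \emph{strictly} inside the ball (so the spurious normal cone of the ball drops out). It is exactly this interplay that pins down the admissible range of $\lambda$ and therefore the final constant $\tfrac{2}{\rho}+\tfrac{5\kappa}{2}$.
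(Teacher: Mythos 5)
Your proof is correct, and it reaches the exact constants $2$ and $\tfrac{2}{\rho}+\tfrac{5\kappa}{2}$ of the lemma, but the technical execution differs from the paper's in a genuine way. The paper applies Ekeland's principle to the \emph{quadratically perturbed} function $\zeta_{\rho}(x;\wh{x}) \triangleq \theta(x) + \bigl(\tfrac{1}{2\rho}+\tfrac{\kappa}{2}\bigr)\|x-\wh{x}\|^2$ over all of $X$: the sandwich $\wt{V}_{\rho}(x;\wh{x}) \le \zeta_{\rho}(x;\wh{x}) \le \wt{V}_{\rho}(x;\wh{x}) + \tfrac{\kappa}{2}\|x-\wh{x}\|^2$ makes $x^+ = {\cal M}_{\wt{V}_{\rho}}(\wh{x})$ an $\varepsilon$-minimizer of $\zeta_{\rho}$ with $\varepsilon=\tfrac{\kappa}{2}r^2$ and $\lambda=\tfrac{\kappa}{2}r$, and the final constant is then split between the gradient $\bigl(\tfrac{1}{\rho}+\kappa\bigr)(\bar{x}-\wh{x})$ of the quadratic term (contributing $\tfrac{2}{\rho}+2\kappa$ via $\|\bar{x}-\wh{x}\|\le 2r$) and the Ekeland ball $\lambda\,\mathbb{B}$ (contributing $\tfrac{\kappa}{2}$). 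You instead keep $\theta$ unperturbed and localize to $B=\overline{\mathbb{B}}(\wh{x};2r)\cap X$, accepting the larger gap $\varepsilon=\bigl(\tfrac{3}{2\rho}+2\kappa\bigr)r^2$ and pushing the entire constant into $\lambda$; the price is the calibration you correctly identify, namely checking $\varepsilon/\lambda<r$ so that $\bar{x}$ lands strictly inside the ball and ${\cal N}(\bar{x};B)={\cal N}(\bar{x};X)$. What each buys: the paper's route needs no localization and no activity argument for an auxiliary constraint, and it works uniformly over $X$; yours yields a cleaner first-order inclusion $0\in\partial_C\theta(\bar{x})+\lambda\,\mathbb{B}+{\cal N}(\bar{x};X)$ with no extra gradient term, and it makes transparent that the whole estimate is really a statement about $\theta$ on an $O(r)$-neighborhood of $\wh{x}$. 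Both arguments are compatible with the paper's later remark that $\partial_C$ may be replaced by any subdifferential validating a sum rule and first-order optimality.
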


	Employing the (LEB) condition (\ref{eq:local error bound}), we can establish the
	(S-GEB$_\rho$) condition (\ref{eq:error bound}).
	
	\begin{theorem} \label{th:error bound} \rm
		Let $X$ be a compact convex set in $\mathbb{R}^n$ with diameter $D$ and $\theta$ be locally Lipschitz
		continuous on $X$.  Suppose that the condition (LEB) with the constant pair  $(\varepsilon, {\eta})$,
and conditions (SR1) and (SR2) with a constant $\kappa$ hold for the
		optimization problem \eqref{eq:general optimization} and the convex surrogate function
		$\wh{V}$. Then for every $\rho > 0$, (S-GEB$_{\rho}$) holds; that is
		there exists a constant {$\wh{\eta} \triangleq   \max\left\{ \, 2 + \eta \,
		\left( \, \displaystyle{
			\frac{2}{\rho}
		} + \displaystyle{
			\frac{5 \, \kappa}{2}
		} \, \right), \displaystyle{\frac{2D}{\varepsilon}} \left( \, \displaystyle{
		\frac{2}{\rho}
	} + \displaystyle{
	\frac{5 \, \kappa}{2}
} \, \right)  \right\}  $} such that
		\begin{equation} \label{eq:eb with wheta}
		\mbox{dist}\left(x;S_{X,\theta}^{\, C} \right) \, \leq \, \wh{\eta} \, \,
		r_{\wt{V}_{\rho}}(x), \epc \mbox{for all } x \in X.
		\end{equation}
	\end{theorem}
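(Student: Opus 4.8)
The plan is to chain together the two estimates furnished by Lemma~\ref{lm:stationarity_distance} with the local error bound (LEB), and then to upgrade the resulting local inequality to one valid on all of the compact set $X$. Concretely, I would fix an arbitrary $x \in X$ and apply Lemma~\ref{lm:stationarity_distance} to obtain the auxiliary point $\bar{x} \in X$ (depending on $x$ and $\rho$) satisfying both $\norm{\bar{x} - x} \leq 2\, r_{\wt{V}_\rho}(x)$ and $r_{X,\theta}^C(\bar{x}) \leq \left(\frac{2}{\rho} + \frac{5\kappa}{2}\right) r_{\wt{V}_\rho}(x)$. The role of $\bar{x}$ is that, although $x$ itself need not be nearly C-stationary, the derived point $\bar{x}$ has a C-stationarity residual $r_{X,\theta}^C(\bar{x})$ already controlled by the computable quantity $r_{\wt{V}_\rho}(x)$; this is precisely the object to which (LEB) applies.

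Next I would handle the regime of small residual. Whenever $\left(\frac{2}{\rho} + \frac{5\kappa}{2}\right) r_{\wt{V}_\rho}(x) \leq \varepsilon$, the second bound above forces $r_{X,\theta}^C(\bar{x}) \leq \varepsilon$, so the hypothesis of (LEB) is met \emph{at} $\bar{x}$ and yields $\mbox{dist}(\bar{x}; S_{X,\theta}^{\,C}) \leq \eta\, r_{X,\theta}^C(\bar{x}) \leq \eta \left(\frac{2}{\rho} + \frac{5\kappa}{2}\right) r_{\wt{V}_\rho}(x)$. Combining this with the first bound through the triangle inequality $\mbox{dist}(x; S_{X,\theta}^{\,C}) \leq \norm{x - \bar{x}} + \mbox{dist}(\bar{x}; S_{X,\theta}^{\,C})$ produces exactly $\mbox{dist}(x; S_{X,\theta}^{\,C}) \leq \left[\,2 + \eta\left(\frac{2}{\rho} + \frac{5\kappa}{2}\right)\right] r_{\wt{V}_\rho}(x) = \wh{\eta}\, r_{\wt{V}_\rho}(x)$, i.e.\ the asserted bound for all such $x$.

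Finally I would extend the inequality to those $x$ with large residual, i.e.\ $r_{\wt{V}_\rho}(x) > \varepsilon/\left(\frac{2}{\rho} + \frac{5\kappa}{2}\right)$. On this region the residual is bounded away from zero while $\mbox{dist}(x; S_{X,\theta}^{\,C})$ is bounded by the diameter $D$ of the compact set $X$, so the error-bound ratio is automatically controlled there. The passage from the local bound to a bound valid on all of $X$ is the content of \cite[Proposition~6.1.3]{facchinei2007finite}, whose applicability rests on assumption (SR2): it guarantees that $r_{\wt{V}_\rho}$ vanishes precisely on $S_{X,\theta}^{\,C}$, so that $r_{\wt{V}_\rho}$ is a legitimate residual for that set.

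I expect the main obstacle to be the careful verification that the hypotheses of the extension step are met---in particular that $r_{\wt{V}_\rho}(\bullet)$ is lower semicontinuous and, via (SR2), a genuine residual for $S_{X,\theta}^{\,C}$---together with the bookkeeping needed so that the reported global constant remains $\wh{\eta}$. The construction of $\bar{x}$ in Lemma~\ref{lm:stationarity_distance} (where Ekeland's principle enters) is the analytic heart, but since that lemma is already available, the remaining care lies in invoking (LEB) at $\bar{x}$ rather than at $x$ and in stitching the small- and large-residual regimes together cleanly.
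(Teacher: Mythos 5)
Your proposal follows essentially the same route as the paper's proof: invoke Lemma~\ref{lm:stationarity_distance} to produce the auxiliary point $\bar{x}$, apply (LEB) at $\bar{x}$ in the small-residual regime, combine via the triangle inequality to obtain the constant $2 + \eta\left(\frac{2}{\rho}+\frac{5\kappa}{2}\right)$, and then pass from the local bound to all of $X$ by the compactness/scaling argument from \cite{facchinei2007finite}. Your closing caveat about whether the global constant survives as exactly $\wh{\eta}$ (rather than being inflated by the large-residual regime, where the paper itself only appeals to the scaling argument without tracking the constant) is a fair observation, but it reflects an imprecision already present in the paper rather than a gap in your argument.
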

	
	\begin{proof} 
		We first show that positive
		$\varepsilon^{\, \prime}$ and $\eta^{\, \prime}$ exist such that for all $x \in X$,
		\begin{equation} \label{eq:intermediate error bound}
		r_{\wt{V}_{\rho}}(x)\, \leq \,
		\varepsilon^{\, \prime}\ \Rightarrow \
		\mbox{dist}\left(x;S_{X,\theta}^{\, C} \right) \, \leq \, \eta^{\, \prime} \,
		r_{\wt{V}_{\rho}}(x).
		\end{equation}
		Let $x \in X$ be arbitrary, satisfying
		that $x \neq {\cal M}_{\wt{V}_{\rho}}(x)$ without loss of generality.  Corresponding
		to this $x$, let $\bar{x} \in X$ be
		the vector satisfying the two inequalities in Lemma~\ref{lm:stationarity_distance}.
		Let $\varepsilon$ and $\eta$ be the constants with which (LEB) holds and
		$\varepsilon^{\, \prime} \triangleq
		\left(  \displaystyle{
			\frac{2}{\rho}
		} +   \displaystyle{
			\frac{5 \, \kappa}{2}
		}  \right)^{-1} \varepsilon $.
		We then have
		\[ \begin{array}{lll}
		r_{\wt{V}_{\rho}}(x) \, \leq \,
		\varepsilon^{\, \prime} & \Longrightarrow &
		r_{X,\theta}^{\, C}(\bar{x}) \, \leq \, \left( \displaystyle{
			\frac{2}{\rho}
		} + \displaystyle{
			\frac{5 \, \kappa}{2}
		} \, \right) \, r_{\wt{V}_{\rho}}(x) \, \leq \,
		\varepsilon \stackrel{(\rm LEB)}{\Longrightarrow}   \mbox{dist}(\bar{x};S_{X,\theta}^{\, C})
		\, \leq \, \eta \cdot
		  r_{X,\theta}^C(\bar{x}) 
		\end{array}
		\]
		By the triangle inequality, we have
		\[ \begin{array}{lll}
			r_{\wt{V}_{\rho}}(x) \, \leq \,
		\varepsilon^{\, \prime}   \Longrightarrow \mbox{dist}(x;S_{X,\theta}^{\, C})  \leq  \mbox{dist}(\bar{x};S_{X,\theta}^{\, C})
		+ \| \, x - \bar{x} \, \|  \leq  \eta \,
		\left( \, \displaystyle{
			\frac{2}{\rho}
		} + \displaystyle{
			\frac{5 \, \kappa}{2}
		} \, \right) \, r_{\wt{V}_{\rho}}(x)
		+ 2 \,r_{\wt{V}_{\rho}}(x),
		\end{array}
		\]
		from which the existence of the constants $\varepsilon^{\, \prime} = \left(  \displaystyle{
			\frac{2}{\rho}
		} +   \displaystyle{
			\frac{5 \, \kappa}{2}
		}  \right)^{-1} \varepsilon$ and
		$\eta^{\, \prime} = 2 + \eta \,
		\left( \, \displaystyle{
			\frac{2}{\rho}
		} + \displaystyle{
			\frac{5 \, \kappa}{2}
		} \, \right)  $ satisfying (\ref{eq:intermediate error bound}) follows readily.
		Since $X$ is a compact set with radius $D$, by a scaling argument employed
		in \cite[Proposition~6.1.2]{facchinei2007finite}, the
		local error (\ref{eq:intermediate error bound}) easily implies the
		surrogation-based global error bound for all $x \in X$. 
	\end{proof}
	\gap

{In the constant $\wh{\eta} \triangleq \max\left\{ \, 2 + \eta \,
	\left( \, \displaystyle{
		\frac{2}{\rho}
	} + \displaystyle{
		\frac{5 \, \kappa}{2}
	} \, \right), \displaystyle{\frac{2D}{\varepsilon}} \left( \, \displaystyle{
		\frac{2}{\rho}
	} + \displaystyle{
		\frac{5 \, \kappa}{2}
	} \, \right)  \right\}$, 
the pair $(\varepsilon,\eta)$ consists of the
local error bound (LEB) constants of the stationarity map $S_{X,\theta}^{\, C}$;
$\rho$ and $\kappa$ are parameters in the fixed-point map ${\cal M}_{\wt{V}_{\rho}}$ that is the abstraction
of the majorization algorithm, with the former ($\rho$) being the proximal parameter and the latter ($\kappa$)
being the relative gap of majorization.  As with all error bound results, the constant $\wh{\eta}$ is best considered
a conceptual scalar relating the distance function $\mbox{dist}\left(x;S_{X,\theta}^{\, C} \right)$ with the residual
$r_{\wt{V}_{\rho}}(x)$ and showing that the latter two quantities are of the same order.  The value $\wh{\eta}$
is difficult to be fully specified unless the constants $\varepsilon$, $\eta$, $\rho$, and $\kappa$ are
available.  Among the latter three constants, $\varepsilon$ and $\eta$ are again best considered as   conceptual scalars that can be
explicitly specified only in very particular instances; its existence is related to the qualitative stability of the solution
set $S_{X,\theta}^{\, C} $ under data perturbations.  In spite of the lack of more explicit quantification of $\wh{\eta}$,
its expression as given provides some qualitative information about the role of the two main parameters $\kappa$ and $\rho$ in the
residual bound in terms of the algorithmic map; namely, the smaller the majorization gap, the sharper the error bound; and the
same is true for the proximal constant.}
	
	
	\subsubsection{Metric subregularity} \label{subsubsec:MS}
	
	A benefit of the local error bound postulate is that it yields a quantitative
	surrogation-based global error bound
	(\ref{eq:error bound}) for a measure of approximate stationarity of any test vector
	$x \in X$; in particular, the bound applies to any iterate $x^{\nu}$ produced by
	the MM algorithm.  A pointwise version of this postulate will yield similar conclusions
	but is restricted to points near a given reference point of concern.  We say that
	the point-to-set map $x (\in X) \mapsto \partial_C \theta(x) + {\cal N}(x;X)$ is
	\emph{metrically subregular} \cite[Section~3.8 3H]{dontchev2009implicit}
	at a given C-stationary point $\wh{x}$ of (\ref{eq:general optimization}) if
	
	\gap
	
	\textbf{(MS)} there
	exist positive constants $\delta$ and $\eta$ such that for all $x \in X$,
	\begin{align}
	\label{eq:metric_subregularity}
	\| \, x - \wh{x} \, \| \, \leq \, \delta \ \Rightarrow \
	\mbox{dist}(x;S_{X,\theta}^{\, C}) \, \leq \, \eta \, r_{X,\theta}^C(x);
	\end{align}
	or equivalently,
	\[
	\| \, x - \wh{x} \, \| \, \leq \, \delta \ \Rightarrow \
	\mbox{dist}\left(x;\left[ \,
	\partial_C \, \theta + {\cal N}(\bullet;X) \, \right]^{-1}(0) \, \right) \, \leq \,
	\eta \, \mbox{dist}\left(0;\partial_C \,  \theta(x) + {\cal N}(x;X) \right).
	\]
	By \cite[Proposition~6.1.2]{facchinei2007finite}, it follows that the local error bound
	postulate implies metric subregularity at every C-stationary solution of
	(\ref{eq:general optimization}).  With an additional assumption below,  we establish
	the surrogation-based computable error bound in
	Theorem \ref{th:pointwise error bound} below, which is the pointwise analog of
	Theorem \ref{th:error bound}, but holds in a neighborhood of the C-stationary point
	satisfying metric subregularity.
	
	\gap
	
	{\bf (SR3)} The set-valued map
	$\partial_x \wh{V} : (x,x^{\, \prime}) \in X \times X \mapsto
	\partial_x \wh{V}(x;x^{\, \prime})$ is closed; that is, its graph $
	\left\{ \, ( a,x,x^{\, \prime} ) \, \in \, \mathbb{R}^n \, \times \, X \, \times X
	\, \mid \, a \, \in \, \partial_x \wh{V}(x;x^{\, \prime}) \, \right\} $
	is a closed set.
	
	\gap
	
	{\bf Remark:} Unlike the previous (SR1), (SR3) pertains to an outer semicontinuity of
	the subdifferential map $\partial_x \wh{V}$ of the surrogate function $\wh{V}$
	with reference to its two arguments.  The role of (SR3) is highlighted in the
	following proposition.
	
	\begin{proposition} \label{pr:existence fixed point} \rm 	
		Under (SR3), the map ${\cal M}_{\wt{V}_{\rho}}$ is continuous and thus has a fixed
		point in $X$.
	\end{proposition}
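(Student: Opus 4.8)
The plan is to first show that $\mathcal{M}_{\wt{V}_\rho}$ is a well-defined single-valued self-map of $X$, then to establish its continuity, and finally to invoke Brouwer's fixed-point theorem. Since each $\wh{V}(\bullet;x^{\, \prime})$ is convex by (SR1) and the proximal term $\frac{1}{2\rho}\| \, \bullet - x^{\, \prime} \, \|^2$ is $\frac{1}{\rho}$-strongly convex, the objective $\wt{V}_{\rho}(\bullet;x^{\, \prime})$ is strongly convex on the compact convex set $X$; hence it has a unique minimizer, so $\mathcal{M}_{\wt{V}_\rho}(x^{\, \prime})$ is well defined and lies in $X$.

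For continuity I would argue sequentially. Let $\{x^{\, \prime k}\} \subset X$ converge to $x^{\, \prime \infty}$, write $z^k \triangleq \mathcal{M}_{\wt{V}_\rho}(x^{\, \prime k})$, and let $z^{\infty}$ be any accumulation point of $\{z^k\}$, which exists by the compactness of $X$; pass to a subsequence with $z^k \to z^{\infty}$. The first-order optimality condition for $z^k$ reads
\[
0 \, \in \, \partial_x \wh{V}(z^k;x^{\, \prime k}) + \frac{1}{\rho}(z^k - x^{\, \prime k}) + \mathcal{N}(z^k;X),
\]
so there are $a^k \in \partial_x \wh{V}(z^k;x^{\, \prime k})$ and $w^k \in \mathcal{N}(z^k;X)$ with $a^k + \frac{1}{\rho}(z^k - x^{\, \prime k}) + w^k = 0$. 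Because each $\wh{V}(\bullet;x^{\, \prime})$ is a finite convex function on the open set $Y \supseteq X$, its subdifferential is locally bounded, so $\{a^k\}$ stays bounded, and then $\{w^k\}$ is bounded as well. Passing to a further subsequence gives $a^k \to a^{\infty}$ and $w^k \to w^{\infty}$. The closed-graph assumption (SR3) yields $a^{\infty} \in \partial_x \wh{V}(z^{\infty};x^{\, \prime \infty})$, and the closedness of the normal-cone map of the convex set $X$ gives $w^{\infty} \in \mathcal{N}(z^{\infty};X)$. Taking limits in the optimality identity produces $0 \in \partial_x \wh{V}(z^{\infty};x^{\, \prime \infty}) + \frac{1}{\rho}(z^{\infty} - x^{\, \prime \infty}) + \mathcal{N}(z^{\infty};X)$, i.e., $z^{\infty}$ is the unique minimizer of $\wt{V}_{\rho}(\bullet;x^{\, \prime \infty})$, whence $z^{\infty} = \mathcal{M}_{\wt{V}_\rho}(x^{\, \prime \infty})$. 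Since every accumulation point of the sequence $\{z^k\} \subset X$ equals this common limit, the whole sequence converges, establishing continuity. Consequently $\mathcal{M}_{\wt{V}_\rho}$ is a continuous self-map of the nonempty compact convex set $X$, and Brouwer's fixed-point theorem furnishes a fixed point.

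I expect the main obstacle to be the uniform boundedness of the selected subgradients $\{a^k\}$ needed to extract convergent subsequences: local boundedness of $\partial_x \wh{V}(\bullet;x^{\, \prime})$ for each fixed $x^{\, \prime}$ is standard for finite convex functions, but one must ensure the bound does not blow up as $x^{\, \prime k} \to x^{\, \prime \infty}$. This is secured by the local boundedness of the convex subdifferential over the compact set $X \times X$ together with the closed graph (SR3), which jointly make $\partial_x \wh{V}$ outer semicontinuous with locally bounded images; it is precisely this step that legitimizes passing to the limit and then invoking (SR3) and the closedness of the normal-cone map.
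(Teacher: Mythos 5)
Your argument is correct and follows essentially the same route as the paper's (deliberately sketched) proof: a subgradient/first-order optimality argument at the iterates, the closed-graph assumption (SR3) to pass to the limit, uniqueness of the minimizer of the strongly convex proximal subproblem to identify the limit, and then Brouwer's theorem on the compact convex set $X$. Your additional care about the uniform boundedness of the selected subgradients $a^k$ as the base point varies is a legitimate detail that the paper's sketch glosses over, and your resolution via local boundedness of the convex subdifferential over the compact set $X \times X$ together with (SR3) is an acceptable way to complete that step.
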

	
	\begin{proof}  We only sketch the proof.  Let $\{ x^k \} \subset X$ be a sequence
		converging to $x^{\infty}$.  We need to show that the sequence
		$\{ {\cal M}_{\wt{V}_{\rho}}(x^k) \}$
		converges to ${\cal M}_{\wt{V}_{\rho}}(x^{\infty})$.  To do this, it suffices to show
		that every accumulation point of $\{ {\cal M}_{\wt{V}_{\rho}}(x^k) \}$, at least one of
		which must exist, solves
		$\displaystyle{
			\operatornamewithlimits{\mbox{minimize}}_{x \in X}
		} \ \wt{V}_{\rho}(x;x^{\infty})$.  In turn, this can be proved using a subgradient
		argument, assumption (SR3), and the uniqueness of the optimal solution of the latter
		strongly convex problem.
	 
	\end{proof}
	\gap
	
	
	\begin{theorem} \label{th:pointwise error bound} \rm
		Let $X$ be a compact convex set in $\mathbb{R}^n$ and $\theta$ be locally Lipschitz
		continuous on $X$.  Let the surrogate function $\wh{V}$ satisfy the conditions (SR1),
		(SR2), and (SR3).  Let $\wh{x}$ be a fixed point of ${\cal M}_{\wt{V}_{\rho}}$ (thus
		a C-stationary solution of (\ref{eq:general optimization}) under (SR2)).  If $\wh{x}$
		is metrically subregular satisfying \eqref{eq:metric_subregularity}, there exist
		positive scalars $\delta^{\, \prime}$ and $\eta^{\, \prime} > 0$ such that
		\begin{equation} \label{eq:metric subreg}
		x \, \in \, \mathbb{B}(\wh{x};\delta^{\, \prime}) \ \Rightarrow \
		\mbox{dist}(x;S_{X,\theta}^{\, C}) \, \leq \, \eta^{\, \prime} \, r_{\wt{V}_{\rho}}(x).
		\end{equation}
	\end{theorem}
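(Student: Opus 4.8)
The plan is to mirror the argument of Theorem~\ref{th:error bound}, but to replace the global postulate (LEB) by the pointwise metric subregularity (MS) at the reference fixed point $\wh{x}$, and to exploit the continuity of the algorithmic map ${\cal M}_{\wt{V}_{\rho}}$ (guaranteed by (SR3) via Proposition~\ref{pr:existence fixed point}) in order to \emph{localize} the auxiliary vector produced by Lemma~\ref{lm:stationarity_distance}. The constant will turn out to be the same $\eta^{\, \prime} = 2 + \eta\left( \frac{2}{\rho} + \frac{5\kappa}{2}\right)$ appearing in the global case.

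First I would record the two facts the proof leans on. Since $\wh{x}$ is a fixed point of ${\cal M}_{\wt{V}_{\rho}}$, the residual $r_{\wt{V}_{\rho}}(\wh{x}) = \norm{\wh{x} - {\cal M}_{\wt{V}_{\rho}}(\wh{x})} = 0$. Moreover, by Proposition~\ref{pr:existence fixed point} the map ${\cal M}_{\wt{V}_{\rho}}$ is continuous, so the scalar function $x \mapsto r_{\wt{V}_{\rho}}(x) = \norm{x - {\cal M}_{\wt{V}_{\rho}}(x)}$ is continuous on $X$ and vanishes at $\wh{x}$. Consequently $r_{\wt{V}_{\rho}}(x)$ can be forced as small as desired by taking $x$ sufficiently close to $\wh{x}$.

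Next, for an arbitrary $x \in X$ with $x \neq {\cal M}_{\wt{V}_{\rho}}(x)$, I would invoke Lemma~\ref{lm:stationarity_distance} (applied with the lemma's base point taken to be the test point $x$) to produce an auxiliary vector $\bar{x} \in X$ obeying both $\norm{\bar{x} - x} \leq 2\, r_{\wt{V}_{\rho}}(x)$ and $r_{X,\theta}^C(\bar{x}) \leq \left( \frac{2}{\rho} + \frac{5\kappa}{2}\right) r_{\wt{V}_{\rho}}(x)$. The crux is that (MS) is valid only on the ball $\mathbb{B}(\wh{x};\delta)$, whereas the usable residual bound is attached to $\bar{x}$ rather than to $x$; hence I must certify $\bar{x} \in \mathbb{B}(\wh{x};\delta)$. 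Estimating $\norm{\bar{x} - \wh{x}} \leq \norm{\bar{x} - x} + \norm{x - \wh{x}} \leq 2\,r_{\wt{V}_{\rho}}(x) + \norm{x - \wh{x}}$, and using the continuity established above, I would choose $\delta^{\, \prime} > 0$ small enough that $x \in \mathbb{B}(\wh{x};\delta^{\, \prime})$ forces $2\,r_{\wt{V}_{\rho}}(x) + \norm{x - \wh{x}} \leq \delta$, and therefore $\bar{x} \in \mathbb{B}(\wh{x};\delta)$.

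Finally, with $\bar{x}$ inside the region where (MS) holds, I would apply \eqref{eq:metric_subregularity} to $\bar{x}$ and chain it with the lemma's bound to obtain $\mbox{dist}(\bar{x};S_{X,\theta}^{\, C}) \leq \eta\, r_{X,\theta}^C(\bar{x}) \leq \eta \left(\frac{2}{\rho} + \frac{5\kappa}{2}\right) r_{\wt{V}_{\rho}}(x)$; a triangle inequality $\mbox{dist}(x;S_{X,\theta}^{\, C}) \leq \mbox{dist}(\bar{x};S_{X,\theta}^{\, C}) + \norm{x - \bar{x}}$ then yields the claimed bound with $\eta^{\, \prime} = 2 + \eta\left(\frac{2}{\rho} + \frac{5\kappa}{2}\right)$. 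I expect the main obstacle to be precisely this localization step—guaranteeing that the Ekeland-derived vector $\bar{x}$, and not merely $x$, sits inside $\mathbb{B}(\wh{x};\delta)$—since this is where the \emph{pointwise} nature of metric subregularity bites and where the continuity of ${\cal M}_{\wt{V}_{\rho}}$ inherited from (SR3) is indispensable; the remaining estimates are the same bookkeeping as in Theorem~\ref{th:error bound}.
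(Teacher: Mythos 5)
Your proposal is correct and follows essentially the same route as the paper's proof: use the continuity of ${\cal M}_{\wt{V}_{\rho}}$ (from (SR3)) to make the residual small near $\wh{x}$, apply Lemma~\ref{lm:stationarity_distance} to get $\bar{x}$, certify $\bar{x} \in \mathbb{B}(\wh{x};\delta)$ by the triangle inequality, and then chain metric subregularity at $\bar{x}$ with a final triangle inequality to obtain $\eta^{\, \prime} = 2 + \eta\left(\frac{2}{\rho} + \frac{5\kappa}{2}\right)$. The paper makes your ``small enough'' choice explicit by taking $\delta_1$ so that the residual is at most $\delta/4$ and setting $\delta^{\, \prime} = \min(\delta_1, \delta/2)$, but this is only a bookkeeping difference.
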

	
	\begin{proof}
		Let $\delta$ and $\eta$ be the constants associated with the metric subregularity of
		$\wh{x}$.  By the continuity of ${\cal M}_{\wt{V}_{\rho}}$, for the given $\delta$,
		we may choose $\delta_1 > 0$ such that
		\[
		\| \, x - \wh{x} \, \| \, \leq \, \delta_1 \ \Rightarrow \
		\norm{ {\cal M}_{\wt{V}_{\rho}}(x) - \wh x } \, \leq \, \delta/4.
		\]
		Let $\delta^{\, \prime} \triangleq \min\left( \, \delta_1, \, \displaystyle{
			\frac{\delta}{6}
		} \, \right)$.  Let $x \in X$ be arbitrary satisfying
		$\| x - \wh{x} \| \leq \delta^{\, \prime}$, and without loss of generality we may
		assume that $x \in X \setminus \mbox{FIX}({\cal M}_{\wt{V}_{\rho}})$.  Thus
		$\norm{x - {\cal M}_{\wt{V}_{\rho}}(x)} \leq \delta/4 + \delta^{\, \prime}$.  Let
		$\bar{x} \in X$ be the vector asserted by Lemma~\ref{lm:stationarity_distance} satisfying
		\[
		\norm{\bar{x}- x} \, \leq \, 2 \, \norm{x - {\cal M}_{\wt{V}_{\rho}}(x)} \, \leq \,
		\displaystyle{
			\frac{\delta}{2}
		}  + 2\, \delta^{\, \prime} \epc \mbox{ and } \epc r_{X,\theta}^C(\bar{x}) \, \leq \, \left( \, \displaystyle{
			\frac{2}{\rho}
		} + \displaystyle{
			\frac{5 \, \kappa}{2}
		} \, \right) \, \norm{x - {\cal M}_{\wt{V}_{\rho}}(x)}.
		\]
		We have
		$
		\| \, \bar{x} - \wh{x} \, \| \, \leq \, \| \, \bar{x} - x \, \| +
		\| \, x - \wh{x} \, \| \, \leq \, 3 \, \delta^{\, \prime} + \displaystyle
		\frac{\delta}{2} \, \leq \, \delta.
		$
		Thus by the metric subregularity at $\wh{x}$, it follows that
		\[
		\mbox{dist}(\bar{x};S_{X,\theta}^{\, C}) \, \leq \, \eta \, r_{X,\theta}^C(\bar{x})
		\, \leq \, \eta \, \left( \, \displaystyle{
			\frac{2}{\rho}
		} + \displaystyle{
			\frac{5 \, \kappa}{2}
		} \, \right) \, \norm{x - {\cal M}_{\wt{V}_{\rho}}(x)}.
		\]
		Therefore,
		\[
		\mbox{dist}(x;S_{X,\theta}^{\, C}) \, \leq \, \| \, x - \bar{x} \, \| +
		\mbox{dist}(\bar{x};S_{X,\theta}^{\, C}) \, \leq \, \left[ \, {2 +
			\left( \, \displaystyle{
				\frac{2}{\rho}
			} + \displaystyle{
				\frac{5 \, \kappa}{2}
			} \,  \right) \eta } \, \right] \,
		\norm{x - {\cal M}_{\wt{V}_{\rho}}(x)},
		\]
		establishing the desired implication (\ref{eq:metric subreg}) with
		$\eta^{\, \prime} = 2 +
		\left( \, \displaystyle{
			\frac{2}{\rho}
		} + \displaystyle{
			\frac{5 \, \kappa}{2}
		} \,  \right) \eta$.
 
\end{proof}

\gap

{Since the proofs of Theorems~\ref{th:error bound} and \ref{th:pointwise error bound} are both based on
Lemma~\ref{lm:stationarity_distance} and similar derivations, it is perhaps not surprising that the error bound
constant $\eta^{\, \prime}$ ends up to be the same as the first term in $\wh{\eta}$ in the previous result.  Established under different assumptions,
the validity of these bounds (\ref{eq:eb with wheta}) and (\ref{eq:metric subreg}) is of different kind: the former is of a
global nature that holds for all $x \in X$, whereas the latter pertains only to test vectors $x$ sufficiently near a given
fixed point $\wh{x}$ of the algorithmic map ${\cal M}_{\wt{V}_{\rho}}$ satisfying metric subregularity.}

	\subsection{Probabilistic stopping rule of the SMM algorithm}
	
	Returning to the compound SP problem \eqref{eq:compound_composite_SP}, whose statement we
	repeat below:
	\begin{equation}
	\label{eq:compound SP repeated}
	\displaystyle{
		\operatornamewithlimits{\mbox{minimize}}_{x\in X}
	}  \;\;\Theta(x) \, \triangleq \, \psi\left( \, \mathbb{E}\left[ \,
	\varphi(G(x, \tilde{\xi} ), \, \mathbb{E}[ \, F(x, \tilde{\xi} ) \, ] \, \right] \, \right);
	\end{equation}
	we aim to assess the quality of the SMM iterate $x^\nu$ by treating it as a fixed point for any iteration $\nu$. Notice that with the
	random variables in $\Theta$, it is difficult to compute the deterministic
	error bound \eqref{eq:error bound}  for compound SP (\ref{eq:compound SP repeated}), unless
	the sample space of the random variable $\tilde{\xi}$ is finite and known. {Thus
	we are interested in the establishment of a   probabilistic error bound with a   sampling-based
	residual function  using  iid  samples $\{\bar  \xi^{\,t}\}_{t \in [N]}$ and $\{\bar \eta^{\, s}\}_{s \in [N]}$, independently with the sample
sets used in the SMM algorithm.  In fact,
	the ultimate bound we derive for the compound SP is of the
	``with high probability'' kind (see \eqref{eq:probabilistic eb}) and relates the distance to stationarity in terms
of the difference between any iterate and its computable image under a sampling-based mapping.  More generally, given any $\wh{x} \in X$, with $\overline{V}_N(\bullet; \wh{x})$
being a sampling-based majorizing approximation  with the i.i.d. sample sets $\{\bar \xi^{\,t}\}_{t \in [N]}$ and $\{\bar \eta^{\, s}\}_{s \in [N]}$ and
$\wt{V}^\rho_N(x; \wh{x}) \triangleq  \overline{V}_N(x; \wh{x}) + \displaystyle{\frac{1}{2 \rho}} \, \norm{ x- \wh x }^2$, we have
$\mathbb{P}\Big( \,\mbox{dist}\Big( \wh{x};S_{X,\Theta}^{\, C} \Big)
	\, \geq \, \wh{\eta} \,
	\big\| \,\wh{x} - {\cal M}_{\wt{V}_N^{\rho}}( \wh{x} ) \,\big\| + \varepsilon
	\, \Big) \, \leq \,
	\alpha $ for any $\varepsilon >0 $, $\alpha \in (0, 1)$, and $N \geq O \displaystyle{\Big(\frac{1}{\varepsilon^{\,2}} \Big( \log \Big(\frac{1}{\varepsilon} \Big) + \log \Big(\frac{1}{\alpha} \Big) \Big) \Big)}$; see Theorem~\ref{th:probabilistic eb}.  By letting $\wh{x}$ be an iterate $x^{\nu}$ of the SMM algorithm,
this result gives a lower bound
of the sample size $N$ in defining the computable mapping  $\mathcal M_{\wt{V}_N^\rho}$ such that, roughly speaking,  a lower bound for the probability of $\mbox{dist}\Big( x^{\nu};S_{X,\Theta}^{\, C} \Big)$ not exceeding a prescribed
upper deviation of the difference $\| x^{\nu} - \mathcal M_{\wt{V}_N^\rho}(x^{\nu}) \|$ is guaranteed.
One may also be interested in a more direct error bound of stationarity of the obtained iterate $x^{\nu}$ in terms of the sample
size $N_{\nu}$, bypassing the residual.  Such a bound would evaluate
how close $x^{\nu}$ is to the set of Clarke stationary points in terms of the iteration number $\nu$ and the
total number of utilized samples  $N_{\nu}$. 
\cite{cui2018composite}. 
 Regrettably, this type of improved error bound result is beyond the scope of the present paper, and we have to defer this task to a future work.}
\gap

To achieve the aforementioned probabilistic error bound, we postulate the following condition as an extension of the condition (iii) in  (SR1):
	
	\gap

	{\bf (SR$_{\rm st}$)} in addition to
	(A3$_G$) and (A3$_F$), there exists a constant $\kappa_0 > 0$ such that for all
	$(x^{\, \prime},\xi) \in X \times \Xi$, for any $\wh{G}(\bullet, \xi; x^{\, \prime}) \in \mathcal{G}(x^{\, \prime}, \xi)$  and $\wh{F}(\bullet, \xi; x^{\, \prime}) \in \mathcal{F}(x^{\, \prime}, \xi)$,
	\[
	G(x,\xi) + \displaystyle{
		\frac{\kappa_0}{2}
	} \, \| \, x - x^{\, \prime} \, \|^2 \, \geq \, \wh{G}(x,\xi;x^{\, \prime})
	\epc \mbox{and} \epc
	F(x,\xi) + \displaystyle{
		\frac{\kappa_0}{2}
	} \, \| \, x - x^{\, \prime} \, \|^2 \, \geq \, \wh{F}(x,\xi;x^{\, \prime}), \quad \mbox{for any }  x  \in X.
	\]

The above assumption is satisfied for the three types of component functions $F$ and $G$ discussed in Section \ref{sec:stationarity} under some Lipschitz gradient properties. For the first type of smooth  functions $G_i(\bullet, \xi)$ and $F_j(\bullet, \xi)$ with uniform Lipschitz gradient $\kappa_0$, it is straightforward to see that  (SR$_{\rm st}$) holds for the surrogate functions following  \eqref{eq:smooth_surrogation}. For the second type of dc functions $G_i(x, \xi)$ and $F_j(x, \xi)$  following \eqref{eq:dc_smooth}  with smooth concave parts $h_i^G(x, \xi)$ and  $h_i^F(x, \xi)$ with uniform Lipschitz gradient, a simple calculation can show that the above assumption holds with surrogate functions  \eqref{eq:linearized dc}.  Furthermore, when $G_i(x, \xi)$ and $F_j(x,\xi)$ are dc functions with deterministic max-smooth concave parts following the definition \eqref{eq:max dc}, by constructing the surrogation functions $\wh{G}_{i}^{\varepsilon}(x, \xi; x^{\, \prime})$ and $\wh{F}_{j}^\varepsilon (x, \xi; x^{\, \prime})$ following \eqref{eq:max linearized dc}, we can show that the assumption (SR$_{\rm st}$) holds with some constant $\kappa_0$ when the component functions in the concave parts $\{h_{i,k}^G(x)\}_{k=1,\ldots, K_G}$ and $\{h_{i,k}^F(x)\}_{k=1,\ldots, K_F}$ have the uniform Lipschitz gradient.

	\begin{proposition}
		\rm
		Suppose $G(\bullet, \bullet) : X \times \Xi \to \mathbb{R}^{\ell_G}$, $F(\bullet, \bullet) : X \times \Xi \to \mathbb{R}^{\ell_F}$ are two vector-valued functions, where each element $G_i(x, \xi)$ and $F_j(x,\xi)$ are dc functions with deterministic max-smooth concave parts following the definition \eqref{eq:max dc}. Each surrogation family $\mathcal{G}(x^{\, \prime}, \xi)$ and $\mathcal{F}(x^{\, \prime}, \xi)$ contains a single  surrogation function $\wh{G}^{\varepsilon}(x, \xi; x^{\, \prime})$ and $\wh{F}^\varepsilon (x, \xi; x^{\, \prime})$ respectively following \eqref{eq:max linearized dc} for any $x^{\, \prime} \in X$ and $\xi \in \Xi$. Suppose that  the component functions in the concave parts $\{h_{i,k}^G(x)\}_{\nu=1}^{K_G}$ and $\{h_{i,k}^F(x)\}_{\nu=1}^{K_F}$ have the uniform Lipschitz gradient, then the assumption (SR$_{\rm st}$) holds.
	\end{proposition}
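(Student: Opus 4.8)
The plan is to verify (SR$_{\rm st}$) one component at a time, treating $G$ and $F$ by the identical argument; I describe the step for a fixed component $G_i$ with its (singleton) surrogate $\wt{G}_i^{\,\varepsilon}(\bullet,\xi;x^{\,\prime})$ from \eqref{eq:max linearized dc}. Write $\ell_k(x)\triangleq h_{i,k}^G(x^{\,\prime})+\nabla h_{i,k}^G(x^{\,\prime})^{\top}(x-x^{\,\prime})$ for the linearization of the $k$-th concave piece at the base point $x^{\,\prime}$. Using $G_i(x,\xi)=g_i^G(x,\xi)-\max_{k}h_{i,k}^G(x)$ together with $\wt{G}_i^{\,\varepsilon}(x,\xi;x^{\,\prime})=g_i^G(x,\xi)-\max_{k\in\mathcal{A}_i^{G;\varepsilon}(x^{\,\prime})}\ell_k(x)$, the random convex term $g_i^G(x,\xi)$ cancels and the inequality required by (SR$_{\rm st}$) collapses to the purely deterministic estimate
\[
M(x)\triangleq\max_{1\le k\le K_G}h_{i,k}^G(x)\ \le\ \widehat{M}(x;x^{\,\prime})+\frac{\kappa_0}{2}\,\norm{x-x^{\,\prime}}^2,\qquad\forall\,x\in X,
\]
where $\widehat{M}(x;x^{\,\prime})\triangleq\max_{k\in\mathcal{A}_i^{G;\varepsilon}(x^{\,\prime})}\ell_k(x)$. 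That $\xi$ disappears from this reduction is exactly the feature that makes the deterministic max-smooth case tractable, in contrast to the random-$h$ case flagged as open earlier. (The majorization and outer-semicontinuity clauses of (SR$_{\rm st}$) are inherited from (A3$_G$)--(A3$_F$); only the quadratic upper bound above is new.)

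Next I would prove this estimate by picking an index $\bar{k}$ attaining $M(x)=h_{i,\bar{k}}^G(x)$ and splitting into two cases according to whether $\bar{k}$ is $\varepsilon$-active at $x^{\,\prime}$. Let $\kappa_1$ denote the common Lipschitz-gradient modulus of the pieces; the descent lemma for a convex function with $\kappa_1$-Lipschitz gradient gives $h_{i,\bar{k}}^G(x)\le\ell_{\bar{k}}(x)+\tfrac{\kappa_1}{2}\norm{x-x^{\,\prime}}^2$. If $\bar{k}\in\mathcal{A}_i^{G;\varepsilon}(x^{\,\prime})$, then $\ell_{\bar{k}}(x)\le\widehat{M}(x;x^{\,\prime})$ and the estimate already holds with $\kappa_0=\kappa_1$. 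The delicate case---the one I expect to be the main obstacle---is $\bar{k}\notin\mathcal{A}_i^{G;\varepsilon}(x^{\,\prime})$: the index largest at $x$ was not even near-active at $x^{\,\prime}$, so $\ell_{\bar{k}}$ is not controlled by $\widehat{M}$ directly. The resolution is to exploit the activity gap. Non-activeness means $h_{i,\bar{k}}^G(x^{\,\prime})\le M(x^{\,\prime})-\varepsilon$, while the maximizing index $k_0$ at $x^{\,\prime}$ is active and yields $\widehat{M}(x;x^{\,\prime})\ge\ell_{k_0}(x)=M(x^{\,\prime})+\nabla h_{i,k_0}^G(x^{\,\prime})^{\top}(x-x^{\,\prime})$.

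Subtracting the two bounds, I obtain
\[
M(x)-\widehat{M}(x;x^{\,\prime})\ \le\ -\varepsilon+\bigl[\nabla h_{i,\bar{k}}^G(x^{\,\prime})-\nabla h_{i,k_0}^G(x^{\,\prime})\bigr]^{\top}(x-x^{\,\prime})+\frac{\kappa_1}{2}\norm{x-x^{\,\prime}}^2.
\]
Since $X$ is compact and each piece has a Lipschitz (hence bounded) gradient on $X$, the pairwise gradient differences are bounded by a constant $C_\nabla$ uniform in $x^{\,\prime}$ and in the indices, so the cross term is at most $C_\nabla\norm{x-x^{\,\prime}}$. Young's inequality $C_\nabla\,t\le\varepsilon+\tfrac{C_\nabla^2}{4\varepsilon}t^2$ with $t=\norm{x-x^{\,\prime}}$ then absorbs the $-\varepsilon$ and gives $M(x)-\widehat{M}(x;x^{\,\prime})\le\tfrac12\bigl(\kappa_1+\tfrac{C_\nabla^2}{2\varepsilon}\bigr)\norm{x-x^{\,\prime}}^2$, so the estimate holds with $\kappa_0=\kappa_1+\tfrac{C_\nabla^2}{2\varepsilon}$, which also covers the active case. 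Running the identical argument for each $F_j$ and taking the largest resulting constant produces a single $\kappa_0$ valid for both $G$ and $F$, establishing (SR$_{\rm st}$). The one point worth double-checking is that the \emph{determinism} of the $h$ pieces makes both $\mathcal{A}_i^{G;\varepsilon}(x^{\,\prime})$ and $C_\nabla$ independent of $\xi$, so the same $\kappa_0$ works simultaneously for all $\xi\in\Xi$, precisely as (SR$_{\rm st}$) demands.
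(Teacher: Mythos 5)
Your proof is correct, and it takes a genuinely different route from the paper's. The paper argues in two regimes: first it shows that for $x$ in a ball $\mathbb{B}(x^{\,\prime};\delta(x^{\,\prime},\varepsilon))$ the exactly-active set at $x$ is contained in ${\cal A}_i^{G;\varepsilon}(x^{\,\prime})$, so the descent lemma alone yields the quadratic bound with $\kappa_1$; it then patches the region $X\setminus\mathbb{B}(x^{\,\prime};\delta(x^{\,\prime},\varepsilon))$ by invoking the uniform boundedness of $G$ and $\wh{G}^{\,\varepsilon}$ (assumptions (A4), (A4$_{\rm st}$)) together with $\norm{x-x^{\,\prime}}\ge\delta$ to manufacture a second constant $\kappa_2$, and sets $\kappa_0=\max\{\kappa_1,\kappa_2\}$. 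Your argument instead splits on whether the index $\bar{k}$ attaining $\max_k h^G_{i,k}(x)$ is $\varepsilon$-active at $x^{\,\prime}$, and in the non-active case converts the activity gap $-\varepsilon$ into a quadratic term via Young's inequality, arriving at the single explicit constant $\kappa_0=\kappa_1+C_\nabla^2/(2\varepsilon)$. What your route buys is twofold: it avoids the local/global dichotomy entirely, and in particular it sidesteps a uniformity issue the paper leaves implicit --- the paper's $\delta(x^{\,\prime},\varepsilon)$ and hence its $\kappa_2$ depend on $x^{\,\prime}$, and a uniform positive lower bound on $\delta$ over the compact $X$ is asserted rather than proved; your constant is manifestly uniform in $x^{\,\prime}$, $\xi$, and the indices. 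It also needs less: you use only boundedness of the gradients of the finitely many deterministic pieces on the compact $X$ (which follows from the Lipschitz-gradient hypothesis), not boundedness of $G$ and $\wh{G}^{\,\varepsilon}$ themselves. The price is a constant that blows up as $\varepsilon\downarrow 0$, which is intrinsic to the construction and harmless here since $\varepsilon>0$ is fixed. Your observations that the random convex part $g_i^G(\bullet,\xi)$ cancels, and that only the quadratic majorization clause of (SR$_{\rm st}$) is new beyond (A3$_G$)--(A3$_F$), are both accurate readings of the assumption.
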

	\begin{proof}
		By the convexity and the Lipschitz gradient property with a constant $\kappa_1$, we have
		\[
		h^G_i(x^{\, \prime}) + \nabla h_i^G(x^{\, \prime})^\top (x-x^{\, \prime}) + \displaystyle{\frac{\kappa_1}{2}} \, \norm{ x- x^{\, \prime}}^2 \, \geq  \, h_i^G(x) \, \geq \,  h_i^G(x^{\, \prime}) + \nabla h_i^G(x^{\, \prime})^\top (x-x^{\, \prime}).
		\]
		Then $
		\wh{G}_i^{\varepsilon}(x, \xi; x^{\, \prime}) - \displaystyle{\frac{\kappa_1}{2}} \, \norm{x- x^{\, \prime}}^2  \, \leq  \, g_i^G(x, \xi) - \displaystyle{\max\left\{ \, h^G_{i, k}(x): {k \in \mathcal{A}_i^{G, \varepsilon}(x^{\, \prime})} \right\}}  \, \leq \,  \wh{G}_i^{\varepsilon}(x, \xi; x^{\, \prime}) $.
		Let $\mathcal{A}_i^G(x) \triangleq  \left\{ k \mid  h_{i, k}^G(x) \geq   \max\, \left\{ h_{i, k^{\,\prime}} ^G(x): {1 \leq k^{\, \prime} \leq K_G} \right\}  \right\}$.  For an arbitrary $x^{\, \prime} \in X$, for any positive $\varepsilon$, $ \exists \,  \delta(x^{\, \prime}, \varepsilon)$ such that $ \forall x\in X \cap \mathbb{B}(x^{\, \prime}; \, \delta(x^{\, \prime}, \varepsilon)),$ we have $\mathcal{A}_i^G(x) \subseteq \mathcal{A}_i^G(x^{\, \prime}) \subseteq \mathcal{A}_i^{G, \varepsilon}(x^{\, \prime})$. This indicates that $ \forall x\in X \cap \mathbb{B}(x^{\, \prime}; \, \delta(x^{\, \prime}, \varepsilon)),$
		\begin{equation}
		\label{eq:dc_surrogation_upper}
		\displaystyle{\max_{k \in \mathcal{A}_i^{G, \varepsilon}(x^{\, \prime})}} \, h^G_{i, k}(x)   = \displaystyle{\max_{k \in \mathcal{A}_i^{G}(x^{\, \prime})}} \, h^G_{i, k}(x), \quad \mbox{and} \quad  \wh{G}_i^{\varepsilon} (x, \xi; x^{\, \prime}) \leq G_i(x, \xi) + \displaystyle{\frac{\kappa_1}{2}} \norm{x- x^{\, \prime}}^2.
		\end{equation}
		We next extend this local condition into a global one as in (SR$_{\rm st}$). Under assumptions (A4) and (A4$_{\rm st}$), both $\| G \| $ and $\| \wh{G}^\varepsilon \|$ are upper bounded for any $x, x^{\, \prime} \in X$ and $\xi \in \Xi$.  Hence, for any $x$ in the set $X \setminus  \mathbb{B}(x^{\, \prime}; \delta(x^{\, \prime}, \varepsilon))$, there exists $\kappa_2 >0$ such that $\wh{G}_i^{\varepsilon} (x, \xi; x^{\, \prime})  \leq G_i(x, \xi) + \displaystyle{\frac{\kappa_2}{2}} \norm{ x -x^{\, \prime}}^2$. By combining with \eqref{eq:dc_surrogation_upper}, the condition (SR$_{\rm st}$) holds with $\kappa_0 = \max\{ \kappa_1, \kappa_2\}$.
 
\end{proof}
\gap
	
	The assumption (SR$_{\rm st}$) with the constant $\kappa_0$, together with (A2), implies that all functions  $\wh{V}(\bullet; x^{\, \prime} ) \in \mathcal{V}(x^{\, \prime}  )$ are well defined and  there exists a constant $\kappa \triangleq \mbox{Lip}_{\psi} \,  \mbox{Lip}_{\varphi} \, \kappa_0 $, such that
	\begin{equation} \label{eq:stochastic other inequality}
	\Theta(x) + \displaystyle{
		\frac{\kappa}{2}
	} \, \| \, x - x^{\, \prime} \, \|^2 \, \geq \, \wh{V}(x;x^{\, \prime}),
	\epc \forall \, x \, \in \, X.
	\end{equation}

	

	
	\gap
	
	For any member $\wh{V} (\bullet;x^{\, \prime}) \in
	\mathcal{\wh{V}}(x^{\, \prime})$ and any scalar $\rho > 0$, we continue to write
	$\wt{V}^{\rho}(x;x^{\, \prime}) \triangleq  \wh{V}(x;x^{\, \prime}) +
	\displaystyle{
		\frac{1}{2\rho}
	} \norm{x - x^{\, \prime}}^2$.
	Although being a deterministic quantity, the residual based on the corresponding
	fixed-point map $\mathcal{M}_{\wt{V}_\rho}$  involves compound
	expectations, thus is not practical for use as a stopping rule for the
	SMM algorithm.   Let $\overline{\mathcal{V}}_{N}(x^{\, \prime})$ be the family of discretized convex
	majorants \eqref{eq:saa_mm_family_approximation} associated with newly generated sample sets
	$\{ \bar \xi^t \}_{t \in [N]}$ and $\{ \bar \eta^s \}_{s \in [N]}$, for any member $\overline{V}_N(\bullet;x^{\, \prime}) \in
	\mathcal{\overline{V}}_N(x^{\, \prime})$ and any scalar $\rho > 0$, with $\wt{V}_N^{\rho}(x;x^{\, \prime}) \triangleq  \overline{V}_N(x;x^{\, \prime}) +
	\displaystyle{
		\frac{1}{2\rho}
	} \norm{x - x^{\, \prime}}^2$, we are interested in utilizing the sampling-based
	fixed-point map  $\mathcal{M}_{\wt{V}_N^\rho}$   to construct a
	computable sample-based residual function and establish a corresponding error bound
	of stationarity; such a result must necessarily be
	probabilistic  to take into account the randomness  that defines the
	 sampling-based  map  $\mathcal{M}_{\wt{V}_N^\rho}$. 
 \gap
	
	Further, we want to extend the pointwise conditions
	(A4)  for surrogate functions so that we can apply the error bound to
	an arbitrary test vector.
	We postulate the Lipschitz continuity condition in  (A5$_{\rm {st}}$) for the SAA convergence rate in probability established in \cite{hu2020sample}, which will be  utilized in the probabilistic error bound result.
	 \gap 
	
\textbf{(A4$_{\rm st}$)}
	$  \displaystyle{
		\sup_{x,x^{\,\prime}  \in X } }  \, \max\left\{    
	 \mathbb{E}    \norm{ \, \wh{F}(x, \tilde{\xi}; x^{\,\prime}) - \mathbb{E}\,[\,\wh{F}(x, \tilde{\xi}; x^{\,\prime})\,]\, }^2,    
   \mathbb{E}   \norm{ \, \wh{G}(x, \tilde{\xi}; x^{\,\prime}) - \mathbb{E}\,[\,\wh{G}(x, \tilde{\xi}; x^{\,\prime})\,]\, }^2 \, \right\}  \leq \wh \sigma^2$.

	\gap
	
	{\bf (A5$_{\rm st}$)} $\wh{G}(\bullet,\xi;x^{\, \prime})$ and
	$\wh{F}(\bullet,\xi;x^{\, \prime})$
	are both Lipschitz continuous on $X$, uniformly in
	$(\xi,x^{\, \prime}) \in \Xi \times X$ for any $\wh{G}(\bullet, \xi; x^{\, \prime}) \in \mathcal{G}(x^{\, \prime}, \xi) $ and $\wh{F}(\bullet, \xi; x^{\, \prime}) \in \mathcal{F}(x^{\, \prime}, \xi)$.
	
	\gap
	
	Finally, we postulate that the condition (LEB) holds
	for the pair $(\Theta,X)$ with constant pair  $(\varepsilon, \eta)$ and that (SR$_{\rm st}$) and (SR2) hold for any surrogate $\wh{V} \in \mathcal{V}$ with the constant $\kappa_0$.
	As a deterministic optimization problem and with the latter deterministic surrogate,
	Theorem~\ref{th:error bound} can be applied to the problem
	(\ref{eq:compound SP repeated}) to yield the existence of a scalar $$\wh{\eta} \triangleq   \max\left\{ \, 2 + \eta \,
	\left( \, \displaystyle{
		\frac{2}{\rho}
	} + \displaystyle{
		\frac{5 \, \mbox{Lip}_{\psi} \mbox{Lip}_{\varphi}\kappa_0}{2}
	} \, \right), \displaystyle{\frac{2D}{\varepsilon}} \left( \, \displaystyle{
		\frac{2}{\rho}
	} + \displaystyle{
		\frac{5 \, \mbox{Lip}_{\psi} \mbox{Lip}_{\varphi} \kappa_0}{2}
	} \, \right)  \right\}  $$
	such that
	\begin{equation} \label{eq:error bound repeated}
	\mbox{dist}\left( \wh{x};S_{X,\Theta}^{\, C} \right) \, \leq \, \wh{\eta} \, \,
	r_{\wt{V}_{\rho}}( \wh{x} ), \epc \mbox{for all } \wh{x} \, \in \, X.
	\end{equation}
	The next step is to relate the deterministic residual
	$r_{\wt{V}_{\rho}}( \wh{x} ) = \norm{ \wh{x} - {\cal M}_{\wt{V}_{\rho}}( \wh{x})}$
	to the sampled residual
	$  \norm{ \wh{x} - {\cal M}_{\wt{V}_N^{\rho}}( \wh{x} )}$
	corresponding new  sample sets
	$\{ \bar \xi^{\, t} \}_{t \in [N]}$ and $\{ \bar \eta^{\, s} \}_{s \in [N]}$.  By the triangle
	inequality, we have
	\begin{equation} \label{eq:triangle inequality}
	r_{\wt{V}_{\rho}}( \wh{x} ) = \norm{ \wh{x} - {\cal M}_{\wt{V}_{\rho}}( \wh{x} )} \, \leq \,
	\norm{ \wh{x} - {\cal M}_{\wt{V}_N^{\rho}}( \wh{x} )} +
	\norm{ {\cal M}_{\wt{V}_{\rho}}( \wh{x} ) - {\cal M}_{\wt{V}_N^{\rho}}( \wh{x} )}.
	\end{equation}
	Therefore, it remains to bound the second term on the right-hand side.  Noticing that
	${\cal M}_{\wt{V}_{\rho}}( \wh{x} )$ and ${\cal M}_{\wt{V}_N^{\rho}}( \wh{x} )$
	are, respectively, the unique minimizers of the two strongly convex programs:
	\[
	\displaystyle{
		\operatornamewithlimits{\mbox{minimize}}_{x \in X}
	} \ \left[ \, \wh{V}(x;\wh{x} ) + \displaystyle{
		\frac{1}{2 \, \rho}
	} \, \norm{ x - \wh{x} }^2 \, \right] \epc \mbox{and} \epc
	\displaystyle{
		\operatornamewithlimits{\mbox{minimize}}_{x \in X}
	} \ \left[ \, \overline{V}_N(x;\wh{x} ) + \displaystyle{
		\frac{1}{2 \, \rho}
	} \, \norm{ x - \wh{x} }^2 \, \right],
	\]
	we may apply the following simple result which is fairly standard for a strongly
	convex program.  For completeness, we provide a proof.
	
	\begin{lemma} \label{lm:strongly convex program} \rm
		Let $f$ be a strongly convex function with modulus $\zeta > 0$ on the compact convex set
		$X \subseteq \mathbb{R}^n$ and let $\bar{x}$ be the unique minimizer of $f$ on $X$.
		For any function $g$, if $x^g$ is a minimizer of $g$ on $X$, then
		$\norm{ \bar{x} - x^g } \, \leq \, \displaystyle{
			\frac{2}{\sqrt{\zeta}}
		} \, \sqrt{\max_{x \in X} \, | \, f(x) - g(x) \, |}
		$.
	\end{lemma}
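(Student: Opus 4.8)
The plan is to exploit the quadratic growth property that a strongly convex function enjoys at its constrained minimizer, and then to control the optimality gap $f(x^g)-f(\bar{x})$ by the uniform deviation $M \triangleq \max_{x\in X}|f(x)-g(x)|$.

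First I would record the growth inequality
\[
f(x) \, \geq \, f(\bar{x}) + \frac{\zeta}{2}\,\norm{x-\bar{x}}^2, \qquad \forall\, x \in X.
\]
This is the one place where strong convexity is used in an essential way. By the first-order optimality of $\bar{x}$ on the convex set $X$, there exists a subgradient $s \in \partial f(\bar{x})$ with $s^{\top}(x-\bar{x})\geq 0$ for all $x\in X$ (equivalently $-s\in\mathcal{N}(\bar{x};X)$); combining this with the defining inequality of $\zeta$-strong convexity, namely $f(x)\geq f(\bar{x})+s^{\top}(x-\bar{x})+\frac{\zeta}{2}\norm{x-\bar{x}}^2$, and discarding the nonnegative linear term yields the displayed bound.

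Next I would specialize the growth inequality to $x=x^g$, which gives $\frac{\zeta}{2}\norm{x^g-\bar{x}}^2 \leq f(x^g)-f(\bar{x})$, and then bound the right-hand side through $g$. Writing
\[
f(x^g)-f(\bar{x}) \, = \, \big[\,f(x^g)-g(x^g)\,\big] + \big[\,g(x^g)-g(\bar{x})\,\big] + \big[\,g(\bar{x})-f(\bar{x})\,\big],
\]
the middle bracket is nonpositive because $x^g$ minimizes $g$ on $X$, while the first and third brackets are each at most $M$ by definition of $M$. Hence $f(x^g)-f(\bar{x})\leq 2M$, so $\frac{\zeta}{2}\norm{x^g-\bar{x}}^2\leq 2M$, which rearranges to $\norm{\bar{x}-x^g}\leq \frac{2}{\sqrt{\zeta}}\sqrt{M}$, the claimed estimate.

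The only step requiring genuine care is the growth inequality, precisely because $\bar{x}$ is a constrained rather than an unconstrained minimizer; the normal-cone optimality condition is exactly what permits the linear subgradient term to be dropped. Everything after that is an elementary three-term split exploiting the minimality of $x^g$ for $g$, followed by a square root, so I do not anticipate any further obstacle beyond invoking the optimality condition for $\bar{x}$ correctly.
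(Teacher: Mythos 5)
Your proposal is correct and follows essentially the same route as the paper's proof: the quadratic growth inequality at $\bar{x}$ followed by the three-term split of $f(x^g)-f(\bar{x})$ with the middle term killed by the minimality of $x^g$ for $g$. The only difference is that you spell out the normal-cone justification of the growth inequality, which the paper leaves implicit.
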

	
	\begin{proof}
		By the strong convexity of $f$, it follows that $f(x) - f(\bar{x}) \geq \, \displaystyle{
			\frac{\zeta}{2}
		}  \, \norm{ x - \bar{x} }^2$  for all $x \, \in \, X$.  Thus, for $x^g$ as given in the lemma, we have
		\[ \begin{array}{lll}
		\displaystyle{
			\frac{\zeta}{2}
		}  \,  \, \norm{ x^g - \bar{x} }^2 & \leq & f(x^g) - f(\bar{x})
		\, = \, \left[ \, f(x^g) - g(x^g)\, \right] + \left[ \, g(x^g) - g(\bar{x}) \, \right]
		+ \left[ \, g(\bar{x}) - f(\bar{x}) \, \right] \\ [0.1in]
		& \leq & 2 \, \displaystyle{
			\max_{x \in X}
		} \, | \, f(x) - g(x) \, | \epc \mbox{because $g(x^g) \leq g(\bar{x})$}.
		\end{array}
		\]
		Thus the desired inequality follows readily.
		 
		\end{proof}
		\gap
	
	Since the function $\wh{V}(\bullet;\wh{x}) + \displaystyle{
		\frac{1}{2 \, \rho}
	} \, \| \, \bullet - \wh{x} \, \|^2$ is strongly convex with the modulus
	$\rho^{-1}$ that is independent of
	$\wh{x}$, applying
	Lemma~\ref{lm:strongly convex program} to the inequality (\ref{eq:triangle inequality})
	yields  that for all $\wh{x} \in X$,
	\[
	\norm{ \wh{x} - {\cal M}_{\wt{V}_{\rho}}( \wh{x} )} \, \leq \,
	\norm{ \wh{x} - {\cal M}_{\wt{V}_N^{\rho}}( \wh{x} )} + 2 \, \sqrt{\rho} \,
	\sqrt{\displaystyle{
			\max_{x \in X}
		} \, | \, \wh{V}(x;\wh{x}) - \overline{V}_N(x;\wh{x}) \, |}\;.
	\]
	Substituting this into (\ref{eq:error bound repeated}) we therefore deduce 
	\begin{equation} \label{eq:two-terms error bound}
	\mbox{dist}\left( \wh{x};S_{X,\Theta}^{\, C} \right) \, \leq \, \wh{\eta} \,
	\norm{ \wh{x} - {\cal M}_{\wt{V}_N^{\rho}}( \wh{x} )} + 2 \, \sqrt{\rho} \, \wh{\eta}  \,
	\sqrt{\displaystyle{
			\max_{x \in X}
		} \, | \, \wh{V}(x;\wh{x}) - \overline{V}_N(x;\wh{x}) \, |}\;.
	\end{equation}
	Notice that $\displaystyle{
		\max_{x \in X}
	} \, | \, \wh{V}(x;\wh{x}) - \overline{V}_N(x;\wh{x}) \, |$ is the  uniform upper bound of the sample average
	approximation error for the compound expectation function $\wh{V}(x; \wh{x})$ on $X$.
	{Under assumptions (A4$_{\rm st}$) and (A5$_{\rm st}$), we may apply
	Corollary 5.1 in \cite{hu2020sample} to deduce the existence of  $\bar{\varepsilon}>0$ and two positive constants
	$C_2$ and $C_3$ which depend on the finite variance $\wh \sigma$, the Lipschitz continuity modulus $\mbox{Lip}_\varphi$, $\mbox{Lip}_\psi$
and the diameter of the feasible set $X$,  such that for all $\wh{x} \in X$, and any $\varepsilon \in (0, \bar{\varepsilon})$ and any $\alpha \in (0,1)$,
if the sample size satisfies
$N \geq C_2\,  \Big( \,  \displaystyle{\frac{2 \,  {\rho} \, \wh{\eta}^{\,2}}{\varepsilon^{\,2}}} \Big( n \log\Big(\, \frac{2 \, \sqrt{\rho} \, \wh{\eta} \,   C_3}{\varepsilon}\,\Big) + \log\Big(\frac{1}{\alpha} \Big)\Big) \Big) $, with
	  i.i.d.\ samples $\{ \bar \xi^{\, t} \}_{t \in [N]}$ and $\{ \bar \eta^{\, s} \}_{s \in [N]}$,
	  	\[
	  \mathbb{P}\left( \, \displaystyle{
	  	\max_{x \in X}
	  } \, | \, \wh{V}(x;\wh{x}) - \overline{V}_N(x;\wh{x})  \, |\, \geq \, \frac{\varepsilon}{2 \, \sqrt{\rho} \, \wh{\eta}} \, \right) \,   \leq  \,  \alpha  .
	  \]
	Combining the above inequality with \eqref{eq:two-terms error bound}, 
	we obtain  that
	\begin{equation} \label{eq:probabilistic eb}
	\mathbb{P}\left( \,\mbox{dist}\left( \wh{x};S_{X,\Theta}^{\, C} \right)
	\, \geq \, \wh{\eta} \,
	\norm{ \wh{x} - {\cal M}_{\wt{V}_N^{\rho}}( \wh{x} )} + \varepsilon
 \, \right) \, \leq \,
 \alpha  .
	\end{equation}
	We summarize the above derivations in the theorem below.}

\begin{theorem} \label{th:probabilistic eb} \rm
Suppose assumptions (A1)--(A3), (A4$_{\rm st}$) , (A5$_{\rm st}$),  (LEB) with the constant pair $(\varepsilon, \eta)$,  (SR$_{\rm st}$) with the modulus $\kappa_0$  and (SR2) hold for the compound SP \eqref{eq:compound SP repeated}. Then for every $\rho > 0$, with $\wh{\eta} \triangleq   \max\left\{ \, 2 + \eta \,
\left( \, \displaystyle{
	\frac{2}{\rho}
} + \displaystyle{
	\frac{5 \, \mbox{Lip}_{\psi} \mbox{Lip}_{\varphi}\kappa_0}{2}
} \, \right), \displaystyle{\frac{2D}{\varepsilon}} \left( \, \displaystyle{
	\frac{2}{\rho}
} + \displaystyle{
	\frac{5 \, \mbox{Lip}_{\psi} \mbox{Lip}_{\varphi} \kappa_0}{2}
} \, \right)  \right\}  $, there exist positive constants $\bar{\varepsilon}$,
$C_2$ and $C_3$,  such that for all $\wh{x} \in X$ and any $\varepsilon \in (0, \bar{\varepsilon})$ and $\alpha \in (0,1)$,
provided that the sample size  satisfies
$N \geq C_2\,  \Big( \,  \displaystyle{\frac{2 \,  {\rho} \, \wh{\eta}^{\,2}}{\varepsilon^{\,2}}} \Big( n \log\Big(\, \frac{2 \, \sqrt{\rho} \, \wh{\eta} \,   C_3}{\varepsilon}\,\Big) + \log\Big(\frac{1}{\alpha} \Big)\Big) \Big) $,
the probabilistic bound (\ref{eq:probabilistic eb}) holds with i.i.d.\ samples $\{ \bar \xi^{\, t} \}_{t \in [N]}$ and $\{ \bar \eta^{\, s} \}_{s \in [N]}$.  \hfill $\Box$	
	\end{theorem}
	
This suggests that in the SMM algorithm, by treating $x^\nu$ as a given point,
the distance between the current iterate $x^{\nu}$ and the algorithmic map ${\cal M}_{\wt{V}_N^{\rho}}( x^{\nu} ) $ with the new set of i.i.d. generated samples can be adopted as a stopping rule to guarantee the asymptotic stationarity of the iterative point with high probability when the sample size  $N$ is large.
	
	\section{Applications to Risk Measure Minimization}
	\label{sec:application}
	In all the problems discussed in this section, there is a bivariate loss function
	$f(x,\xi)$ defined on $\mathbb{R}^n \times \Xi$ that is of the Carath\'eodory kind.
	The random variable $\tilde{\xi}$ here may represent  the rates of returns of
	some financial assets.  This loss function is composed with multiple types of risk measures;
	the resulting composite function is then the overall objective  to be minimized.
	%
	We present three classes of
	decision-making problems under uncertainty and show how they can be formulated as
	special instances of
	\eqref{eq:compound_composite_SP}.
	
	\subsection{Risk measures: (C)VaR, OCE, and (b)POE}
	
	We focus on three risk measures: the conditional value-at-risk (CVaR);
	the optimized certainty equivalent (OCE), and the buffered probability of
	exceedance (bPOE).  Let $Z:  \Omega \to  \mathbb{R}$  be a scalar random variable on the probability space
	$(\Omega, \mathcal{A}, \mathbb{P})$ with cumulative distribution function $F_Z$.
	For a given scalar $\alpha \in (0,1)$, the $\alpha$-Value-at-Risk (VaR) is defined as
	$\mathrm{VaR}_{\alpha}(Z) \triangleq \min\{z : F_Z(z ) \geq \alpha \}$
	while the $\alpha$-Upper Value-at-Risk  of $Z$
	is defined as $\mathrm{VaR}^+_{\alpha}(Z) \triangleq \min\{z : F_Z(z) > \alpha \}$.
	The $\alpha$-CVaR is
	$\mathrm{CVaR}_{\alpha}(Z ) \triangleq \displaystyle{
		\frac{1}{1-\alpha}
	} \, \int_{z \geq \mathrm{VaR}_{\alpha}(Z )}  z\,
	d (F_Z(z ) -\alpha)$.
	For a continuous random variable $Z$, we have
	$\mathrm{CVaR}_{\alpha}(Z) =
	\mathbb{E}\left[ \, Z \, \mid \, Z \geq \mathrm{VaR}_{\alpha}(Z ) \, \right]$.
	It is known that CVaR can be represented as the optimal value of a scalar convex
	optimization problem; i.e.,
	$
	\mathrm{CVaR}_{\alpha}(Z)  \, = \, \displaystyle\min_{\eta \in \mathbb{R}}\left\{ \, \eta + \displaystyle{
		\frac{1}{1-\alpha}
	} \, \mathbb{E}\left[ \, Z-\eta \, \right]_+ \, \right\}.
	$
	Predating the CVaR and generalizing it, the  OCE is a decision-theoretic risk measure that is defined relative
	to a utility function.  By choosing appropriate utility functions, it is shown
	in \cite{ben2007old} that the negative of the OCE covers a wide
	family of convex risk measures.  For instance, the negative OCE of a certain
	piecewise linear utility function reduces to the CVaR. The formal definition of OCE
	is given below.
	
	\begin{definition} \label{df:OCE} \rm
		Let $u: \mathbb{R} \to [-\infty, \infty)$ be a proper closed concave and
		nondecreasing utility function with effective domain
		$\mathrm{dom}(u) = \{ t  \in \mathbb{R}: u(t) > -\infty\}  \neq \emptyset$.
		Assume further that $u(0)=0$ and $1 \in \partial u(0)$.
		The Optimized Certainty Equivalent (OCE) of the uncertain outcome $Z$ is: 
		\begin{equation} \label{eq:oce_formula}
		S_u(Z) \, \triangleq \, \sup_{\eta \in \mathbb{R}} \, \left\{ \, \eta +
		\mathbb{E}[ u (Z-\eta)] \, \right\}.
		\end{equation}
	\end{definition}
	
	The following lemma \cite[Proposition 2.1]{ben2007old} states that whenever
	there is an optimal solution to the problem \eqref{eq:oce_formula}, there is an
	optimal solution in the support of the random variable $Z$.
	
	\begin{lemma} \label{le:oce_solution} \rm
		Let $Z$ be a scalar random variable with the support interval
		$[z_{\min}, z_{\max}]$.
		If the supremum in $S_u(Z )$ is attained, then
		$
		S_u(Z) \, = \, \underset{\eta \in [z_{\min}, \, z_{\max}]}{\sup} \,
		\left\{ \, \eta + \mathbb{E} u(Z-\eta) \, \right\}.
		$
		Furthermore, if $u$ is strictly concave and $Z$ is a continuous random variable,
		then the supremum  is uniquely attained in
		$[z_{\min},z_{\max}]$.
	\end{lemma}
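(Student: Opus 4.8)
The plan is to treat the inner objective $\phi(\eta)\triangleq \eta+\mathbb{E}[u(Z-\eta)]$ from \eqref{eq:oce_formula} as a one-dimensional concave function of $\eta$ and to show that the search for its supremum may be confined to the support interval $[z_{\min},z_{\max}]$. First I would record that $\phi$ is concave: for each fixed realization the map $\eta\mapsto u(Z-\eta)$ is concave because $u$ is, and concavity is preserved by the expectation and by adding the linear term $\eta$. The engine of the argument is the normalization $1\in\partial u(0)$ together with the fact that the superdifferential of a concave $u$ is nonincreasing: every supergradient at a point $t>0$ is at most $1$, and every supergradient at a point $t<0$ is at least $1$.

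Second I would prove that $\phi$ is nondecreasing on $(-\infty,z_{\min}]$ and nonincreasing on $[z_{\max},\infty)$, using that $Z\in[z_{\min},z_{\max}]$ almost surely. For $\eta_1<\eta_2<z_{\min}$ one has $Z-\eta_2\ge z_{\min}-\eta_2>0$ a.s.; applying the concavity (supergradient) inequality of $u$ at $Z-\eta_2$ with a measurable selection $s'\in\partial u(Z-\eta_2)$ gives $u(Z-\eta_2)-u(Z-\eta_1)\ge s'(\eta_1-\eta_2)$, whence $\phi(\eta_2)-\phi(\eta_1)\ge(\eta_2-\eta_1)\,(1-\mathbb{E}[s'])\ge 0$ because $s'\le 1$ on $\{Z-\eta_2>0\}$. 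The symmetric computation on $\{Z-\eta<0\}$, where supergradients are $\ge 1$, shows $\phi$ is nonincreasing on $(z_{\max},\infty)$; both statements extend to the closed half-lines by continuity of the concave function $\phi$. These two monotonicities immediately yield $\sup_{\eta\in\mathbb{R}}\phi(\eta)=\sup_{\eta\in[z_{\min},z_{\max}]}\phi(\eta)$, which is the first assertion, and in particular show that any attained supremum is attained at some point of $[z_{\min},z_{\max}]$.

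For the second assertion I would first note that strict concavity of $u$ forces $\phi$ to be strictly concave: for distinct $\eta_1,\eta_2$ the strict concavity inequality holds for every realization (since $Z-\eta_1\ne Z-\eta_2$) and survives taking expectations, so $\phi$ has at most one maximizer. Attainment then follows from the localization above: the supremum equals $\sup_{[z_{\min},z_{\max}]}\phi$, and continuity of $Z$ together with finiteness of the OCE makes $\phi$ continuous on this interval by dominated convergence, so the supremum of this upper semicontinuous concave function over the (closed support) interval is achieved. Combining existence with strict concavity produces a unique maximizer in $[z_{\min},z_{\max}]$.

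The hard part will be the bookkeeping at the endpoints $\eta=z_{\min}$ and $\eta=z_{\max}$, where $Z-\eta$ can vanish on a set of positive probability (an atom of $Z$) and a supergradient may then exceed $1$; I would sidestep this by establishing the monotonicities on the \emph{open} half-lines, where $Z-\eta$ is bounded away from $0$ almost surely, and only afterwards pass to the closed intervals by continuity of $\phi$. A secondary technical point is the measurable selection of $s'\in\partial u(Z-\eta)$ and the interchange of the superdifferential with the expectation, which I would justify by the same measurable-selection / Aumann-integral machinery invoked earlier (cf.\ \cite[Theorem~7.47]{shapiro2009lectures}). When the support is unbounded, $[z_{\min},z_{\max}]$ degenerates to a half-line or to all of $\mathbb{R}$, the monotone regions become vacuous, and attainment in the second assertion must instead rest on strict concavity and coercivity of $\phi$, following the original argument of \cite{ben2007old}.
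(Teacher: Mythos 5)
The paper does not actually prove this lemma: it is quoted verbatim from \cite[Proposition~2.1]{ben2007old}, so there is no in-paper argument to compare against. Your reconstruction is essentially the standard proof of that proposition and it is correct in substance: concavity of $\phi(\eta)=\eta+\mathbb{E}[u(Z-\eta)]$, plus the normalization $1\in\partial u(0)$ and monotonicity of the superdifferential of a concave function, give $\phi$ nondecreasing to the left of $z_{\min}$ and nonincreasing to the right of $z_{\max}$, which localizes the supremum; strict concavity of $u$ passes to $\phi$ realization-wise and yields uniqueness once attainment on the compact support interval is secured. Two small points of polish. First, your worry about the endpoints is easily dispatched without the open-half-line/continuity detour: when comparing $\eta>z_{\max}$ directly with $z_{\max}$, on the atom $\{Z=z_{\max}\}$ you may simply select the supergradient $1\in\partial u(0)$, which satisfies both bounds, so the closed-interval statements follow in one step; the real technicality is rather that $u$ is extended-real-valued, so on $(z_{\max},\infty)$ the superdifferential $\partial u(Z-\eta_1)$ can be empty when $Z-\eta_1$ leaves $\mathrm{dom}(u)$ --- but there $u(Z-\eta_2)=-\infty$ as well by monotonicity of $u$, so the desired inequality holds trivially and should be split off as a separate case. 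Second, for attainment you do not need a dominated-convergence argument for continuity: $\phi(z_{\min})$ is finite (since $0\le u(t)\le t$ on $[0,\infty)$ by $u(0)=0$ and $1\in\partial u(0)$), and a proper concave function on the compact interval $[z_{\min},z_{\max}]$ attains its supremum by upper semicontinuity; this also sidesteps the possibility that $\phi(z_{\max})=-\infty$. With those adjustments the argument is complete and, apart from bookkeeping, coincides with the route taken in the cited reference.
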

	
	We refer the reader to \cite{MaherPangRaza2018} for a demonstration of the
	difference-convexity of the function $S_u(f(\bullet,\tilde{\xi} ))$ when
	$f(\bullet,\xi)$ is a difference-of-convex function for almost all $\xi \in \Xi$.
	Another natural risk measures are the Probability of Exceedance (POE) and its buffered
	extension.  Both of these measures have received growing attention in risk applications
	\cite{RockafellarRoysett10,rockafellar2000optimizationCVAR,rockafellar2002conditional,
		norton2017error,norton2019maximization,mafusalov2018buffered,
		mafusalov2018estimation}.
	Being the inverse
	of VaR, the POE quantifies the probability that an uncertainty exceeds a threshold.
	In the same manner that 1 - POE is the inverse of VaR, the buffered probability
	of exceedance is proposed  as the counterpart of CVaR; i.e.,
	1 - bPOE is the inverse of CVaR.  These two probability concepts: POE and bPOE,
	are formally defined as follows.  We let $\sup(Z ) \triangleq \displaystyle{
		\sup_{\omega \in \Omega}
	} \,  Z(\omega) $ denote the
	essential supremum of a scalar-valued random variable $Z$.
	
	\begin{definition} \label{df: bPOE} \rm
		The \emph{probability of exceedance} at the threshold $\tau \in \mathbb{R}$
		of a random variable $Z$ with cumulative distribution function $F_Z$ is
		\[
		\mathrm{POE}(Z; \tau) \, \triangleq \, \mathbb{P} ( Z > \tau) \,  = \, 1 - F_Z(\tau).
		\]
		The \emph{upper buffered probability of exceedance} at the same threshold $\tau$ is
		\[
		\begin{array}{lll}
		\mathrm{bPOE}^+(Z; \,\tau) & \triangleq & \begin{cases}
		1 - \min\left\{ \, \alpha \in ( 0,1 ) \, : \,
		\mathrm{CVaR}_\alpha(Z) \geq \tau \, \right\} &  \mbox{if } \tau \leq  \sup(Z) \\
		0 & \mbox{if } \tau > \sup(Z).
		\end{cases}
		\end{array}
		\]
	\end{definition}
	
	As a function of $\tau$,  $\mathrm{bPOE}^+(Z;\bullet)$
	is a nonincreasing function on $\mathbb{R}$ with no more than one discontinuous point,
	which is $\sup(Z)$ if $\mathbb{P}(Z = \sup(Z)) \neq 0$.   Besides upper bPOE,
	lower bPOE is defined in \cite{mafusalov2018buffered} which differs from upper bPOE
	only when $\tau = \sup(Z)$.  In the present paper, we focus on the upper bPOE as the
	risk measure, so we use $\mathrm{bPOE}(Z; \tau)$ as a simplification to represent
	the upper bPOE. 
	As discussed in-depth in \cite{mafusalov2018buffered}, it is advantageous to use the bPOE
	rather than the POE as the risk measure because bPOE includes the tail distribution
	of risk;
	moreover, the minimization of the objective $\mbox{bPOE}(x^{\top} \tilde{\xi};\tau)$
	over the (deterministic) variable $x$ can be formulated as a convex stochastic program
	by using the convex optimization formula of bPOE as follows:	
	\begin{equation} \label{eq:bPOE min formula}
	\mathrm{bPOE}(Z; \tau)  = \displaystyle{
		\underset{a \geq 0}{\mbox{min}}
	} \ \mathbb{E} [ a(Z-\tau)+1 ]_+  \,.
	\end{equation}
	For a continuous random variable $Z$, we have
	$\mathrm{bPOE}  (Z; \tau) = \mathbb{P}(Z > q)$ where
	$q$ satisfies the equation
	$\tau = \mathbb{E}\, [ \, Z\mid Z\geq q \, ]$.   It can be easily shown that bPOE is invariant under
	monotonic linear transformation, i.e.,
	$\mathrm{bPOE}(h(X); h(\tau)) = \mathrm{bPOE}(X; \tau)$ for any nondecreasing linear
	function $h$.  So bPOE is not a coherent risk measure.   The following lemma identifies
	the solution set of the optimization formula of the bPOE, showing in particular that it
	is a closed interval. Its detailed proof can be found in
	\cite[Proposition~1]{norton2019maximization}.
	
	\begin{lemma} \rm \label{le:solution_interval}
		Let $Z$ be a random variable 
		satisfying $\mathbb{E}[Z] < \sup(Z)$.  For
		$\tau \in \left( \, \mathbb{E}[Z], \, \sup(Z) \, \right)$, the optimal solution set in
		the formula (\ref{eq:bPOE min formula}) is a closed interval
		$\left[ \, \displaystyle{
			\frac{1}{\tau - \mathrm{VaR}^+_{\alpha^*}(Z)}
		}, \, \displaystyle{
			\frac{1}{\tau - \mathrm{VaR}_{\alpha^*}(Z)}
		} \, \right] $ with $\alpha^* = 1- \mathrm{bPOE}(Z; \tau)$.
		For $\tau \leq \mathbb{E}[Z]$, the corresponding optimal solution in bPOE$(Z;\tau)$
		is 0. For $\tau = \sup(Z)$, $\mathrm{bPOE}(Z; \tau) = \displaystyle{
			\lim_{a \to \infty}
		} \, \mathbb{E}[a(Z-\tau)+1]_+$. For $\tau > \sup(\tilde{\xi} )$, the corresponding optimal
		solution set is $\left[ \, \displaystyle{
			\frac{1}{\tau -\sup(\tilde{\xi} )}
		}, \, +\infty \, \right)$.
	\end{lemma}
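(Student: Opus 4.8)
The plan is to treat $\min_{a\ge 0}\phi(a)$ with $\phi(a)\triangleq\mathbb{E}[\,a(Z-\tau)+1\,]_+$ as a scalar convex program and to read off its solution set by converting it, through a change of variables, into the Rockafellar--Uryasev formula for CVaR. First I would record the structural facts that explain the shape of the answer: for each realization, $a\mapsto[\,a(Z-\tau)+1\,]_+$ is the positive part of an affine function of $a$, hence convex, so $\phi$ is a finite convex function on $[0,\infty)$ (finiteness uses $\mathbb{E}|Z|<\infty$). Consequently the set of minimizers of $\phi$ is automatically a closed interval, which already accounts for the interval form of the claim; it remains only to locate the endpoints.

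The key reduction is that for $a>0$ we may write $a(Z-\tau)+1=a\,(Z-\eta)$ with $\eta\triangleq\tau-1/a<\tau$, so that, defining $g(\eta)\triangleq\mathbb{E}[\,Z-\eta\,]_+/(\tau-\eta)$,
\[
\phi(a)=a\,\mathbb{E}[\,Z-\eta\,]_+=\frac{\mathbb{E}[\,Z-\eta\,]_+}{\tau-\eta}=g(\eta),\qquad a=\frac{1}{\tau-\eta}.
\]
Since $\eta\mapsto 1/(\tau-\eta)$ is an increasing bijection of $(-\infty,\tau)$ onto $(0,\infty)$, minimizing $\phi$ over $a>0$ is equivalent to minimizing $g$ over $\eta<\tau$. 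To evaluate the latter I would connect it to CVaR: given $\eta<\tau$, set $\beta\triangleq 1-g(\eta)$, so that $\eta+\frac{1}{1-\beta}\mathbb{E}[\,Z-\eta\,]_+=\tau$; by the Rockafellar--Uryasev formula this yields $\mathrm{CVaR}_{\beta}(Z)\le\tau$, with equality \emph{iff} $\eta$ is a $\beta$-quantile. Because $\mathrm{CVaR}_{\bullet}(Z)$ is continuous and nondecreasing with $\mathrm{CVaR}_{0}(Z)=\mathbb{E}[Z]$ and $\mathrm{CVaR}_{\alpha}(Z)\to\sup(Z)$ as $\alpha\uparrow 1$, for $\tau\in(\mathbb{E}[Z],\sup(Z))$ there is a level $\alpha^{*}$ with $\mathrm{CVaR}_{\alpha^{*}}(Z)=\tau$, and the inequality $\mathrm{CVaR}_{\beta}(Z)\le\tau$ forces $\beta\le\alpha^{*}$, i.e.\ $g(\eta)=1-\beta\ge 1-\alpha^{*}$.

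Equality (the minimum of $g$) then holds exactly when $\beta=\alpha^{*}$ and the CVaR formula is tight at $\eta$, that is, precisely when $\eta$ is an $\alpha^{*}$-quantile. Invoking the standard description of the CVaR-minimizer set, $\mathrm{argmin}\,g=[\,\mathrm{VaR}_{\alpha^{*}}(Z),\,\mathrm{VaR}^{+}_{\alpha^{*}}(Z)\,]$, and translating back through $a=1/(\tau-\eta)$ produces the stated closed interval of optimal $a$, with $\alpha^{*}=1-\mathrm{bPOE}(Z;\tau)$; the monotonicity of $a=1/(\tau-\eta)$ in $\eta$ is what fixes which quantile yields which endpoint. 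I would also compare against $\phi(0)=1$: since $\alpha^{*}>0$ gives $1-\alpha^{*}<1$, the interior minimum strictly beats $a=0$, so $a=0$ is not optimal in this regime. Finally I would dispose of the degenerate cases directly. For $\tau\le\mathbb{E}[Z]$, linearity gives $\phi(a)\ge a(\mathbb{E}[Z]-\tau)+1\ge 1=\phi(0)$, so $a=0$ is optimal. For $\tau=\sup(Z)$, $Z-\tau\le 0$ a.s.\ makes $\phi$ nonincreasing in $a$ with no finite minimizer, and dominated convergence yields $\mathrm{bPOE}(Z;\tau)=\lim_{a\to\infty}\phi(a)$. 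For $\tau>\sup(Z)$, once $a\ge 1/(\tau-\sup(Z))$ the integrand vanishes a.s., so $\phi\equiv 0$ there and the minimizer set is $[\,1/(\tau-\sup(Z)),\,+\infty)$.

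I expect the main obstacle to be the exact identification of the endpoints---equivalently, verifying that $\mathrm{argmin}\,g$ is \emph{exactly} the quantile interval $[\,\mathrm{VaR}_{\alpha^{*}},\,\mathrm{VaR}^{+}_{\alpha^{*}}\,]$ and nothing larger. This requires the two-sided analysis of when the Rockafellar--Uryasev inequality is tight, which is delicate precisely where $F_{Z}$ has flat stretches or atoms---the only situations in which $\mathrm{VaR}_{\alpha^{*}}<\mathrm{VaR}^{+}_{\alpha^{*}}$---together with the careful bookkeeping of how the monotone map $\eta\mapsto 1/(\tau-\eta)$ carries these two endpoints, and a check that $\tau>\mathrm{VaR}^{+}_{\alpha^{*}}(Z)$ (which follows from $\mathrm{CVaR}_{\alpha^{*}}(Z)\ge\mathrm{VaR}^{+}_{\alpha^{*}}(Z)$ together with $\tau<\sup(Z)$) so that both reciprocals are positive and finite.
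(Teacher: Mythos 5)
The paper does not prove this lemma itself: it states the result and defers entirely to the cited reference (\cite[Proposition~1]{norton2019maximization}), so there is no in-paper proof to compare against. Your argument is essentially the standard one from that literature, and it is correct: the substitution $\eta=\tau-1/a$ turning $\mathbb{E}[a(Z-\tau)+1]_+$ into $\mathbb{E}[Z-\eta]_+/(\tau-\eta)$, the identification of the minimum value with $1-\alpha^*$ via the Rockafellar--Uryasev representation of $\mathrm{CVaR}$, and the identification of the argmin with the $\alpha^*$-quantile interval $[\mathrm{VaR}_{\alpha^*}(Z),\mathrm{VaR}^+_{\alpha^*}(Z)]$ pulled back through the increasing bijection $\eta\mapsto 1/(\tau-\eta)$ all go through, as do your dispositions of the three boundary regimes. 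Two loose ends are worth tightening but are not gaps. First, your definition $\beta=1-g(\eta)$ only lands in $(0,1)$ when $g(\eta)<1$; for $\eta$ with $g(\eta)\geq 1$ the Rockafellar--Uryasev step does not apply, but the bound $g(\eta)\geq 1-\alpha^*$ is then trivial, so you should simply split that case off. Second, the step ``$\mathrm{CVaR}_\beta(Z)\leq\tau$ forces $\beta\leq\alpha^*$'' needs $\alpha^*$ to be the unique level with $\mathrm{CVaR}_{\alpha^*}(Z)=\tau$, which holds because $\mathrm{CVaR}_\alpha(Z)$ is strictly increasing in $\alpha$ wherever its value is below $\sup(Z)$; since $\tau<\sup(Z)$, this is exactly the regime you are in, and it also reconciles your $\alpha^*$ with the paper's $\alpha^*=1-\mathrm{bPOE}(Z;\tau)=\min\{\alpha:\mathrm{CVaR}_\alpha(Z)\geq\tau\}$. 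Note finally that under your (correct) orientation the optimal interval in increasing order is $\left[\,1/(\tau-\mathrm{VaR}_{\alpha^*}(Z)),\,1/(\tau-\mathrm{VaR}^+_{\alpha^*}(Z))\,\right]$; the lemma as printed lists the two endpoints in the opposite order, which describes the same set but is a notational quirk you should not try to reproduce literally.
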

	
	We should emphasize that  key conclusions of Lemmas~\ref{le:oce_solution} and
	\ref{le:solution_interval} are the existence of bounded optimal
	solutions of the respective optimization problems defining the OCE and bPOE.
	Such boundedness of the optimizing scalars plays an important role in the treatment
	of the applied problems within the framework of the compound SP
	(\ref{eq:compound_composite_SP}) that requires the blanket assumption of the
	boundedness of the feasible set $X$.
	
	\subsection{Generalized deviation optimization}
	
	The variance is the expected squared deviation from the mean.  Generalizing this classic
	deviation, a generalized deviation measure, which was first proposed in
	\cite{rockafellar2006generalized}, quantifies the deviation from the mean by employing
	risk measures.  For instance, the CVaR of deviation (from the mean) is defined as
	$\mathrm{CVaR}_{\alpha}( \, Z-\mathbb{E}[ \, Z \, ] \, )$, which quantifies the conditional expectation
	of the tail distribution of the difference $Z -\mathbb{E}Z$.  It is useful in many
	applications; e.g., in portfolio management problems when the manager has an
	adverse attitude towards the tail of the  shortfall return.
	It can also be connected with parametric statistical learning problems.
	For instance, the excessive or shortfall forecast of demands may lead to different
	levels of costs in inventory management and thus should be weighted asymmetrically
	in demand forecasting.  Replacing the CVaR of deviation, we consider two other
	generalized deviations in the following subsections, using the OCE (that covers
	the CVaR) and bPOE; we also consider a distributionally robust extension of the
	bPOE problem.  While the treatment in each of these cases is similar, we separate their
	discussion due to their independent interests.
	The applications presented below broaden the applicability of the
	compound SP framework beyond those noted in the
	reference \cite{ermoliev2013sample}.
	
	\subsubsection{OCE-of-deviation optimization}
	\label{sec:OCE-of-deviation optimization}
	Given the loss function $f(x,\xi)$ of the Carath\'eodory kind,
	a random variable $\tilde{\xi}$, and a compact convex
	subset $X$ of $\mathbb{R}^n$, the OCE-of-deviation (from the mean) optimization
	problem is:
	\begin{align} \label{eq:oce_deviation}
	\displaystyle{
		\operatornamewithlimits{\mbox{minimize}}_{x\in X}
	} \ \phi(x) \, \triangleq -S_u\left( \, f(x,\tilde{\xi} ) - \mathbb{E}[f(x,\tilde{\xi} )]
	\, \right).
	\end{align}
	Here and in the next subsection, we make the following additional
	assumptions on the function $f$:
	
	\gap
	
	
	
	\textbf{(B1)} There exist 
	finite scalars $f_{\min}$ and $f_{\max}$ satisfying $f_{\min} < f_{\max}$,
	\[
	f_{\min} \, \leq \, f(x,\xi) - \mathbb{E}\left[ \, f(x,\tilde{\xi} ) \, \right]
	\, \leq \, f_{\max}
	\epc \mbox{for all } x \, \in \, X \mbox{ and almost all $\xi \in \Xi$}.
	\]
	
	\textbf{(B2)} For any $x \in X$,  the argmax of the OCE defining the objective $\phi(x)$
	in (\ref{eq:oce_deviation}) is nonempty; i.e.,
	\[
	\left\{ \, \eta^* \, \in \, \mathbb{R} \, \mid \, S_u\left( \, f(x,\tilde{\xi} ) -
	\mathbb{E}[ \, f(x,\tilde{\xi} ) ] \, \right)
	\, = \, \eta^* + \mathbb{E}\left[ \, u(f(x,\tilde{\xi} ) -\mathbb{E}[f(x,\tilde{\xi} )]
	-\eta^*) \, \right] \, \right\} \, \neq \, \emptyset.
	\]
With (B1) and (B2) in place and according to Lemma \ref{le:oce_solution}, the OCE-of-deviation optimization \eqref{eq:oce_deviation}
can be formulated as the nonconvex compound stochastic program:
\[
\displaystyle{	
\operatornamewithlimits{\mbox{minimize}}_{x\in X, \, \eta\in \left[ \, f_{\min}, \, f_{\max} \, \right]}
} \, \left\{ \, -\eta - \mathbb{E}\left[ \, u\left( \, f(x,\tilde{\xi} ) - \mathbb{E}\left[ \,f(x,\tilde{\xi} ) + \eta \, \right] \, \right)
\, \right] \, \right\},
\]
which is in the form of (\ref{eq:compound_composite_SP}) with the identifications of the component functions
given by (\ref{eq:OCEFandG}).
	With this formulation, we only need to take the surrogation of $\pm f(x,\xi)$
	with respect to $x$ and leave the variable $\eta$ alone.   Depending on the
	structure of $f$, we can construct
	the surrogation accordingly.  For instance, if $f(\bullet,\xi)$
	is a dc function (thus so are $F(\bullet,\eta,\xi)$ and $G(\bullet,\eta,\xi)$),
	we can refer to Subsection~\ref{subsec:assumptions} for the construction of surrogate
	functions of the above objective function.  
	Details are omitted.

	\subsubsection{bPOE-of-deviation optimization}
	
	Buffered probability of exceedance can be used as a risk measure to quantify
	the deviation as well.  In this and the next subsection, we discuss bPOE-based
	optimization problems.  This subsection pertains to a bPOE of deviation problem
	(\ref{eq:bpoe_deviation_original}) subject to a convex feasibility constraint, similar
	to the OCE-of-deviation problem discussed in the last subsection.  The next subsection
	considers a distributionally robust version with the bPOE optimization problem.
	
	\gap
	
	The $\tau$-bPOE of deviation optimization problem as follows:
	\begin{equation}
	\label{eq:bpoe_deviation_original}
	\underset{x\in X}{\mbox{minimize}} \quad
	\phi(x) \, \triangleq \,
	\mbox{bPOE}\left( \, f(x,\tilde{\xi} ) -\mathbb{E}\left[ \, f(x,\tilde{\xi} ) \, \right];
	\tau \, \right).
	\end{equation}
	Without loss of generality, we may assume that
	$\tau \leq \displaystyle{
		\min_{x\in X}
	} \,  \sup_{{\xi} \in \Xi}\, (f(x,  {\xi} ) - \mathbb{E}[f(x, \tilde{\xi} )])$, because
	if $\tau > \displaystyle{
		\min_{x\in X}
	} \,  \sup_{{\xi} \in \Xi} \, (f(x, {\xi} ) - \mathbb{E}[f(x, \tilde{\xi} )])$, then any $\bar{x}\in \displaystyle{
		\operatornamewithlimits{\mbox{argmin}}_{x\in X}
	} \, \sup_{{\xi} \in \Xi}\,(f(x,  {\xi} ) - \mathbb{E}[f(x, \tilde{\xi} )])$
	is an optimal solution  of \eqref{eq:bpoe_deviation_original} because
	$\phi(\bar{x}) = 0$.  Furthermore, to avoid technical details, we
	restrict the level $\tau$ so that Lemma \ref{le:solution_interval} is applicable to
	the random variables for all
	$x \in X$; specifically, we assume
	\gap
	
	\textbf{(C1)} the constant $\tau$ satisfies that $\tau < \displaystyle{
		\min_{x\in X}
	} \,  \sup_{{\xi} \in \Xi}\,\, (f(x,  {\xi} ) - \mathbb{E}[f(x, \tilde{\xi} )]  )$.
	\gap
	
	With this restriction, from Lemma \ref{le:solution_interval}, we deduce the existence of a
	positive constant $\bar{A}$ independent of $x$ such that
	\[
	\phi(x) \, = \, \underset{0 \, \leq a \, \leq \, \bar{A}}{\mbox{min}} \,
	\mathbb{E}\left[ \, a \, ( \, f(x, \tilde{\xi} ) -\mathbb{E}[f(x,\tilde{\xi} )] -\tau \, ) + 1 \,
	\right]_+ \, .
	\]
	By lifting the optimization problem in \eqref{eq:bpoe_deviation_original} to the
	minimization over $x$ and $a$ jointly, we can formulate the following compound SP with a   convex and compact constraint:
	\[
	\displaystyle{
		\operatornamewithlimits{\mbox{minimize}}_{x\in X ,\; 0\leq a \leq \bar{A} }
	}  \;\;\;\mathbb{E}\left[ \,
	\varphi(G(x,a,\tilde{\xi} ), \mathbb{E}[F(x,a,\tilde{\xi} )]) \, \right],
	\]
	where $\varphi(b_1,b_2) = [ \, b_1 + b_2 \,]_+$,
	$F(x,a,\xi) \, \triangleq \, -a f(x,\xi)$ and
	$G(x,a,\xi) \, \triangleq \, a (f(x,\xi) - \tau) + 1
	$.
	For a host of loss functions $f(x,\xi)$, such as smooth functions and dc functions,
	we can again derive surrogates of the above objective function  based on the general discussion in previous
	sections.

	\subsection{Distributionally robust mixed bPOE optimization}
	
	With the single bPOE measure and  a convex loss function $f(\bullet,\xi)$,
	the bPOE optimization problem can be reformulated as a convex stochastic program
	(see Proposition~4.9 in \cite{mafusalov2018buffered}) by using the operation of
	right scalar multiplication.  Several special cases, such as convex piecewise linear
	cost functions and homogenous cost functions with no constraints are considered in
	\cite{mafusalov2018buffered} and \cite{mafusalov2018estimation}.  In this subsection, we
	consider the bPOE optimization with the objective being a mixture of bPOEs at multiple
	thresholds. In this way, unlike the mixed CVaR which is still a convex function, bPOE
	optimization becomes nonconvex and nonsmooth.  Though the idea of mixed risk measures
	is not new, to the best of our knowledge, the present paper is the first to study mixed
	bPOE optimization. There are many potential applications of mixed bPOE.  For instance,
	a decision maker may expect a solution under which not only the probability of the loss
	exceeding a positive level $\tau_1$, i.e., $\mathrm{POE}( f(x,\tilde{\xi} ); \tau_1)$
	is small but
	also the probability of the return exceeding a positive level $\tau_2$, i.e.,
	$\mathrm{POE}( -f(x,\tilde{\xi} ); \tau_2) = 1- \mathrm{POE}(f(x, \tilde{\xi} );- \tau_2)$
	is large.
	Using the bPOE in the place of POE and a mixture of two objectives, the goal is thus
	to minimize the combined objective $ \beta \, \mathrm{bPOE}( f(x, \tilde{\xi} ); \tau_1) +
	(1-\beta) \, \mathrm{bPOE}  ( f(x, \tilde{\xi} ); -\tau_2) $ for different values of
	the scalar $\beta \in (0,1)$.
	
	\gap
	
	In practice,  the exact probability distribution of the uncertainty is usually unknown.
	If one has the partial knowledge of  the true probability distribution,  one could
	construct an ambiguity set of probability distributions and formulate the
	distributionally robust mixed bPOE optimization.  There are numerous choices of
	ambiguity sets; we refer to \cite{rahimian2019distributionally} for a thorough
	review of the state-of-art of  distributionally robust optimization.  In what follows,
	we use a set of mixture distributions as the ambiguity set and we formulate the
	resulting distributionally robust mixed bPOE optimization as a compound SP program.
	If we use the ambiguity set based on the Wasserstein distance, with the dual result in
	\cite{gao2016distributionally}, a standard SP may be formulated.  Since this falls
	outside the scope of compound SPs, we do not consider this version of the robust bPOE
	problem in this paper.  
	
	\gap
	
	In what follows, we consider the optimization of a multi-level mixture of bPOEs.
	Let $\mathcal{P}$ denote the family of ambiguity sets of cumulative  distribution functions.
	The objective function is the maximum over distributions in $\mathcal{P}$ of
	a weighted summation of the
	bPOE$( \, f(x,\tilde{\xi}_p);\tau_j \,  )$
	at the level
	$\tau_j$ weighted by the factor $\beta_j \geq 0$ for $j = 1, \cdots, J$,
	where
	$\tilde{\xi}_p : \Omega \to \Xi \subseteq \mathbb{R}^m$
	represents the random variable with cumulative distribution function $p \in \mathcal{P}$.
	With this setup, we consider
	the following distributionally robust mixed bPOE optimization problem:
	\begin{equation} \label{eq:dro_mixed_bpoe_opt}
	\displaystyle{
		\operatornamewithlimits{\mbox{minimize}}_{x \in X}
	} \;\; \displaystyle{
		\operatornamewithlimits{\mbox{max}}_{p \in \mathcal{P}}
	} \,
	\displaystyle{
		\sum_{j=1}^{J}
	} \, \beta_j \, \displaystyle{
		\operatornamewithlimits{\mbox{min}}_{a_j \geq 0}
	} \ \mathbb{E}\left[ \, a_j \left( \,
	f(x,\tilde{\xi}_p)
	- \tau_j \, \right) + 1 \, \right]_+  ,
	\end{equation}
	where ${\bf a} \triangleq \left( a_j \right)_{j=1}^J$.
	In what follows, we take the family $\mathcal{P}$ to consist
	of mixtures (i.e., convex combinations) of given cumulative distribution functions
	$\{ \pi_k \}_{k=1}^K$, i.e.,
	$
	\mathcal{P} \, \triangleq \, \left\{ \,  \displaystyle{
		\sum_{k=1}^K
	} \, \lambda_k \, \pi_k \, : \, \lambda \, \in \, \Lambda \, \right\},
	$
	where $\Lambda \triangleq \left\{ \, \lambda \, = \, ( \lambda_k )_{k=1}^K
	\, \mid \, \displaystyle{
		\sum_{k=1}^K
	} \, \lambda_k = 1, \mbox{ and } 0 \, \leq \, \lambda_k \, \leq 1 \
	\forall \, k = 1, \cdots, K \, \right\}$ is the unit simplex in $\mathbb{R}^K$.
	Although being convex constrained, problem (\ref{eq:dro_mixed_bpoe_opt}) is a
	highly nonconvex nonsmooth problem with the nonconvexity and nonsmoothness coupled
	in a nontrivial way; both challenging features are in addition to the stochastic
	element of the expectation.  Besides being
	computationally intractable as far as a global minimizer is concerned, it is not even
	clear what kind of stationary solutions one can hope for any algorithm to be able to
	compute.  Our approach to treat this problem is as follows.  First, we show that each
	minimization over $a_j$ can be restricted to a compact interval; this then allows us
	to exchange the maximization over $p \in {\cal P}$ with the minimization over
	${\bf a}  \geq 0$.  The second step is to combine the latter minimization in ${\bf a}$ with
	that in $x$ and to realize that the maximization in $p$ over the unit simplex $\Lambda$
	is equivalent is a discrete pointwise maximum, by the fact that the cumulative distribution function of the random variable $\tilde{\xi}_p$
	is a mixture of $\{ \pi_k \}_{\nu=1}^K$.  The end result
	is that problem (\ref{eq:dro_mixed_bpoe_opt}) becomes a compound SP of the type
	(\ref{eq:compound_composite_SP}).  To accomplish these steps, we assume,
	in addition to the standing assumption of compactness and convexity of the set $X$
	and the Carath\'eodory property of $f$, the following condition similar to the condition (C1):
	
	\gap
	
	
	
	
	
	
	%
	
	\textbf{(C2)} for each $j = 1, \cdots, J$, the constant $\tau_j$ satisfies that $\tau_j < \displaystyle{
		\min_{x\in X}
	} \, \displaystyle{
		\max_{1 \leq k \leq K}
	} \, \sup_{{\xi}_{\pi_k} \in \Xi}\, f(x,{\xi}_{\pi_k})$.

	\gap
	
	With $p \in {\cal P}$ given by $p = \displaystyle{
		\sum_{k=1}^K
	} \, \lambda_k \, \pi_k$, we write
	\[ 
	g_j(x,a_j,p) \, \triangleq \,
	\mathbb{E}\left[ \, a_j \left( \, f(x,\tilde{\xi}_p)
	- \tau_j \, \right) + 1
	\, \right]_+ 
	\, = \, \displaystyle{
		\sum_{k=1}^K
	} \, \lambda_k \, \underbrace{\mathbb{E}\left[ \, a_j \left( \,
		f(x, \tilde{\xi}_{\pi_k})
		- \tau_j \, \right) + 1 \, \right]_+}_{\mbox{denoted $g_j(x,a_j,\pi_k)$}}.
	\]
	Thus,
	\begin{equation} \label{eq:minsum}
	\underset{a_j \geq 0}{\mbox{minimum}} \ g_j(x,a_j,p)
	\, = \, \underset{a_j \geq 0}{\mbox{minimum}} \
	\displaystyle{
		\sum_{k=1}^K
	} \, \lambda_k \, g_j(x,a_j,\pi_k).
	\end{equation}
	Under (C2), we may deduce from Lemma \ref{le:solution_interval} the existence of a
	positive constant $\bar{A}_j$ independent of $k$ such that
	\begin{equation} \label{eq:separate min}
	\underset{a_j \geq 0}{\mbox{minimum}} \ g_j(x,a_j,\pi_k)
	\, = \, \underset{0 \, \leq \, a_j \, \leq \, \bar{A}_j}{\mbox{minimum}} \
	g_j(x,a_j,\pi_k).
	\end{equation}
	The constants $\bar{A}_j$ cannot be directly applied to (\ref{eq:minsum}) because
	there is the summation over $k$.  We need to invoke the following lemma which shows
	that if each univariate convex
	minimization problem in a finite family has a bounded optimal solution, then so
	does any nonnegative combination of this family of problems.  This allows us to
	bound the minimizer in (\ref{eq:minsum}) and prepare the
	exchange of the maximization and inner
	minimization operators in the objective function of
	\eqref{eq:dro_mixed_bpoe_opt}; see the subsequent
	Proposition \ref{prop:refor_bPOE_mixture_ambiguity_set}.
	
	\begin{lemma} \label{le:solution_interval_sum} \rm
		Let $\{ h_i(t) \}_{i=1}^I$ be a finite family of convex functions all defined
		on the same closed interval ${\cal T}$ of $\mathbb{R}$.  For each $i \in [I]$, let
		$t_i^* \in {\cal T}$ be a global minimizer of
		$h_i$ on ${\cal T}$.  Then for any family of
		nonnegative scalars $\{\beta_i\}_{i=1}^I$, the sum function
		$\wh{h}(t) \triangleq \displaystyle{
			\sum_{i=1}^I
		} \, \beta_i h_i(t)$ has a minimizer on ${\cal T}$ that belongs to the compact
		sub-interval ${\cal T}_* \triangleq \left[ \, \displaystyle{
			\min_{i \in [ I ]}
		} \, t_i^*, \, \displaystyle{
			\max_{i \in [ I ]}
		} \, t_i^* \, \right]$.
	\end{lemma}
	
	\begin{proof} Being continuous, $\wh{h}$ has a minimizer in the
		interval ${\cal T}_*$, which we denote $t_*$.  We claim that $t_*$ is a minimizer
		of $\wh{h}$ on ${\cal T}$.  Assume the contrary.  Then there exists
		$u \in {\cal T} \setminus {\cal T}_*$ such that $\wh{h}(u) < \wh{h}(t_*)$.
		Without loss of generality, we may assume that $t_1^* = \displaystyle{
			\min_{i \in [I]}
		} \, t_i^*$ and $t_I^* = \displaystyle{
			\max_{i \in [I]}
		} \, t_i^*$.  Suppose $u < t_1^*$.  By convexity, we have $h_i(u) \geq h_i(t_1^*)$
		for all $i \in [I]$.  Hence, it follows that
		\[
		\displaystyle{
			\sum_{i \in [ I ]}
		} \, \beta_i \, h_i(t_1^*) \, \leq \, \wh{h}(u) \, < \, \wh{h}(t_*)
		\, \leq \, \displaystyle{
			\sum_{i \in [ I ]}
		} \, \beta_i \, h_i(t_1^*),
		\]
		which is a contradiction.  If $u > t_I^*$, then $h_i(u) \geq h_i(t_I^*)$
		for all $i \in [I]$.  Hence, it follows that
		\[
		\displaystyle{
			\sum_{i \in [ I ]}
		} \, \beta_i \, h_i(t_I^*) \, \leq \, \wh{h}(u) \, < \, \wh{h}(t_*) \, \leq \,
		\wh{h}(t_I^*) \, = \, \displaystyle{
			\sum_{i \in [ I ]}
		} \, \beta_i \, h_i(t_I^*),
		\]
		which is also a contradiction.
 
	\end{proof}
	\gap
	
	Since for each $j$, $\displaystyle{
		\sum_{k=1}^K
	} \, \lambda_k \, g_j(x,a_j,\pi_k)$ is nonnegative combination of the family
	$\left\{ \, g_j(x,a_j,\pi_k) \, \right\}_{k=1}^K$, and each
	$g_j(x,\bullet,\pi_k)$ attains its minimum on the nonnegative axis in
	the interval $\left[ 0, \bar{A}_j \, \right]$, by 
	(\ref{eq:separate min}) and the above lemma,
	we deduce the existence of positive constants $\wh{A} \triangleq (\wh{A}_1 , \cdots, \wh{A}_J)$ which are independent of $p \in {\cal P}$ such that
	\[ \begin{array}{ll}
	\displaystyle{
		\operatornamewithlimits{\mbox{min}}_{a \geq 0}
	} \ \displaystyle{
		\sum_{j=1}^J
	} \, \beta_j \, g_j(x,a_j,p) \, & = \,
	\displaystyle{
		\sum_{j=1}^J
	} \, \beta_j \, \displaystyle{
		\operatornamewithlimits{\mbox{min}}_{a_j \geq 0}
	} \, g_j(x,a_j,p)
	= \, \displaystyle{
		\sum_{j=1}^J
	} \, \beta_j \, \displaystyle{
		\operatornamewithlimits{\mbox{min}}_{a_j \geq 0}
	} \, \displaystyle{
		\sum_{k=1}^K
	} \, \lambda_k \, g_{j}(x,a_j,\pi_k) \\ [0.2in]
	& = \, \displaystyle{
		\sum_{j=1}^J
	} \ \beta_j \, \displaystyle{
		\operatornamewithlimits{\mbox{min}}_{0 \, \leq \, a_j \, \leq \, \wh{A}_j}
	} \, \displaystyle{
		\sum_{k=1}^K
	} \, \lambda_k \, g_{j}(x,a_j,\pi_k)  = \, \displaystyle{
		\operatornamewithlimits{\mbox{min}}_{0 \, \leq \,{\bf a}  \leq \, \wh{A}}
	} \ \displaystyle{
		\sum_{j=1}^J
	} \, \beta_j \, g_j(x,a_j,p).
	\end{array}
	\]
	Since the simplex $\Lambda$ is a compact convex set, it follows from the
	well-known min-max theorem that
	\[ \begin{array}{ll}
	& \displaystyle{
		\operatornamewithlimits{\mbox{max}}_{p \in {\cal P}}
	} \, \, \displaystyle{
		\operatornamewithlimits{\mbox{min}}_{a \geq 0}
	}  \displaystyle{
		\sum_{j=1}^J}
	\, \beta_j \, g_j(x, a_j,p) \,  = \, \displaystyle{
		\operatornamewithlimits{\mbox{min}}_{0 \, \leq \, a \, \leq \, \wh{A}}
	} \, \displaystyle{
		\operatornamewithlimits{\mbox{max}}_{p \in {\cal P}}
	} \, \displaystyle{
		\sum_{j=1}^J}
	\, \beta_j \, g_j(x, a_j,p) \\ [0.2in]
	= \, & \displaystyle{
		\operatornamewithlimits{\mbox{min}}_{0 \, \leq \, a \, \leq \, \wh{A}}
	} \, \displaystyle{
		\operatornamewithlimits{\mbox{max}}_{\lambda \in \Lambda}
	} \, \displaystyle{
		\sum_{k=1}^K
	} \, \lambda_k \, \displaystyle{
		\sum_{j=1}^J
	} \, \beta_j \, g_j(x,a_j,\pi_k)
	= \, \displaystyle{
		\operatornamewithlimits{\mbox{min}}_{0 \, \leq \, a \, \leq \, \wh{A}}
	} \, \displaystyle{
		\operatornamewithlimits{\mbox{max}}_{1 \leq k \leq K}
	} \, \displaystyle{
		\sum_{j=1}^J
	} \, \beta_j \, g_j(x,a_j,\pi_k).
	\end{array}
	\]
	Summarizing the above derivations, we obtain the next result that yields the promised
	equivalent formulation of (\ref{eq:dro_mixed_bpoe_opt}) as one
	of minimizing a pointwise maximum function.  No more proof is needed.
	
	\begin{proposition} \label{prop:refor_bPOE_mixture_ambiguity_set} \rm
		Under condition (C3), the distributionally robust mixed bPOE optimization problem
		\eqref{eq:dro_mixed_bpoe_opt} is equivalent to the following program:
		\begin{equation}
		\label{eq:dro_mixed_bpoe_mixture}
		\displaystyle{
			\operatornamewithlimits{\mbox{minimize}}_{x\in X, \, 0 \leq  a \leq \wh{A}}
		} \quad \displaystyle{
			\operatornamewithlimits{\mbox{max}}_{1 \leq k \leq K}
		} \, \displaystyle{
			\sum_{j=1}^J
		} \, \beta_j \, g_j(x,a_j,\pi_k) =  \displaystyle{
			\operatornamewithlimits{\mbox{max}}_{1 \leq k \leq K}
		} \, \displaystyle{
			\sum_{j=1}^J
		} \, \beta_j \, \mathbb{E}\left[ \, a_j \left( \,
		f(x, \tilde{\xi}_{\pi_k})
		- \tau_j \, \right) + 1 \, \right]_+ .
		\end{equation}
		The equivalence means that if $x$ is an optimal solution of
		(\ref{eq:dro_mixed_bpoe_opt}), then $a$ 
		exists such that $(x,a)$ is an optimal solution of
		(\ref{eq:dro_mixed_bpoe_mixture}); conversely, if $(x,a)$ is an optimal
		solution of (\ref{eq:dro_mixed_bpoe_mixture}), then
		$x$ is an optimal solution of (\ref{eq:dro_mixed_bpoe_opt}).  \hfill $\Box$
	\end{proposition}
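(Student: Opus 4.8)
The plan is to fix $x \in X$ and analyze the inner expression $\max_{p \in \mathcal{P}} \sum_{j} \beta_j \min_{a_j \ge 0} g_j(x,a_j,p)$, reducing it in three moves: first confine every inner minimization to a fixed compact box independently of $p$, then exchange the outer maximization with the (now compactly constrained) minimization, and finally collapse the maximization over the simplex to a pointwise maximum over the $K$ vertices. Writing $p = \sum_{k} \lambda_k \pi_k$ with $\lambda \in \Lambda$, the starting observation is the linearity $g_j(x,a_j,p) = \sum_{k} \lambda_k\, g_j(x,a_j,\pi_k)$, immediate from the mixture structure of the distribution and the linearity of the expectation; this is the identity already recorded in \eqref{eq:minsum}.

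For the boxing step I would apply Lemma~\ref{le:solution_interval} to each base random variable $f(x,\tilde{\xi}_{\pi_k})$ under the threshold condition (C2), obtaining for each $j$ a constant $\bar{A}_j$, independent of $k$, with $\min_{a_j \ge 0} g_j(x,a_j,\pi_k) = \min_{0 \le a_j \le \bar{A}_j} g_j(x,a_j,\pi_k)$, that is \eqref{eq:separate min}. The delicate point is that this per-distribution bound cannot be inserted directly into the $\lambda$-weighted sum, since the index attaining a minimizer may shift as the weights vary. This is exactly where Lemma~\ref{le:solution_interval_sum} enters: each $g_j(x,\bullet,\pi_k)$ is convex with a minimizer in $[0,\bar{A}_j]$, so any nonnegative combination $\sum_k \lambda_k g_j(x,\bullet,\pi_k)$ attains its minimum on the common compact interval $[0,\hat{A}_j]$, with $\hat{A}_j$ independent of $\lambda$ and hence of $p$. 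Consequently $\min_{a\ge 0}\sum_j \beta_j g_j(x,a_j,p) = \min_{0\le a\le \hat{A}}\sum_j \beta_j g_j(x,a_j,p)$ for every $p$, using also the separability of the $a_j$ across $j$.

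With $a$ now ranging over the compact box $[0,\hat{A}]$ and $p$ parametrized by $\lambda$ over the compact convex simplex $\Lambda$, I would invoke the classical min-max theorem: the objective $\sum_j \beta_j g_j(x,a_j,p)$ is convex in $a$ (a nonnegative sum of the convex functions $g_j(x,\bullet,p)$) and affine, hence concave, in $\lambda$, so $\max_{\lambda}\min_a = \min_a \max_{\lambda}$. The final reduction is that a function affine in $\lambda$ is maximized over the unit simplex at a vertex, which turns $\max_{\lambda\in\Lambda}\sum_k \lambda_k[\sum_j \beta_j g_j(x,a_j,\pi_k)]$ into $\max_{1\le k\le K}\sum_j \beta_j g_j(x,a_j,\pi_k)$. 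Re-attaching the outer $\min_x$ produces the objective of \eqref{eq:dro_mixed_bpoe_mixture}.

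The main obstacle is precisely the uniform boxing of $a$ in the second move: the interchange of maximization and minimization is legitimate only over compact sets, whereas the a priori minimization is over the unbounded orthant $a \ge 0$; the whole argument hinges on producing a single bound $\hat{A}_j$ valid for all $p\in\mathcal{P}$, for which Lemma~\ref{le:solution_interval_sum} is indispensable. Once the chain of equalities is established, the claimed equivalence of optimizers is routine: compactness of $X\times[0,\hat{A}]$ guarantees that a minimizing $a$ exists for each $x$, so an optimal $x$ for \eqref{eq:dro_mixed_bpoe_opt} extends to an optimal pair $(x,a)$ for \eqref{eq:dro_mixed_bpoe_mixture}, and conversely the $x$-component of any optimal pair is optimal for \eqref{eq:dro_mixed_bpoe_opt}; no further computation is needed, consistent with the paper's remark.
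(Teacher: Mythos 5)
Your proposal follows essentially the same route as the paper: the identity \eqref{eq:minsum} from the mixture structure, the per-distribution bound $\bar{A}_j$ from Lemma~\ref{le:solution_interval} under (C2), the uniform bound $\wh{A}_j$ from Lemma~\ref{le:solution_interval_sum} to handle the $\lambda$-weighted sum, the min-max exchange over the compact box and the simplex, and the reduction of the linear maximization over $\Lambda$ to a discrete maximum over the $K$ vertices. The argument is correct, and your explicit identification of the uniform boxing of $a$ as the step on which the min-max interchange hinges matches the paper's own emphasis.
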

	
	\gap
	To see how (\ref{eq:dro_mixed_bpoe_mixture}) fits the compound SP framework
	(\ref{eq:compound_composite_SP}), we define $\psi : \mathbb{R}^{JK} \to \mathbb{R}$
	by
	$
	\psi(y^1, \cdots, y^J) = \displaystyle{
		\max_{1 \leq k \leq K}
	} \, \displaystyle{
		\sum_{j=1}^J
	} \, \beta_j \, y^j_k$ where each
	$y^j \, = \, \left( y^j_k \right)_{k=1}^K
	$,
	$
	G(x,a,\xi) =
	\left( \, \left( \, a_j \, ( \, f(x, {\xi}_{\pi_k}) - \tau_j \, ) + 1 \, \right)_{k=1}^K \,
	\right)_{j=1}^J
	$, $\varphi(b) =  b_+$ for $b \in \mathbb{R}^{JK}$, and the function $F$ to be identically zero.
	One can now readily obtain convex
	majorants of the objective function in (\ref{eq:dro_mixed_bpoe_mixture}).
	Details are omitted. Notice that in this problem, the outer function $\psi$ is a
	non-trivial pointwise maximum of finitely many nondecreasing linear functions.

	\subsection{Cost-sensitive multiclass classification with buffered probability }
	
	In binary classifications, type I error (the conditional probability that the predicted label is 1 given that the true label is 0) and type II error (the conditional probability that    the predicted label is 0 given that the true label is 1)
	may result in significantly different costs. So cost-sensitive learning approach assigns two different costs as weights of the type I and type II errors in order to learn a classifier with
	the asymmetric control of the binary classification errors. Generalizing to the multiclass classification, there could be multiple (more than two) types of errors which need the asymmetric control.  For instance, a medical diagnosis may categorize the serious condition of a
	disease into 5 levels, i.e., $\{1,2,3,4,5\}$, in which the higher number represents the worse
	condition.  We might categorize the errors into 4 groups by the gap between the true level and the categorized level: (i) greater than 2; (ii) less than 2 and greater than 0;  (iii)  less than 0 and greater than -2; (iv) less than -2. Errors in each group can produce similar costs, but the costs of errors among different groups could be significantly different.
	Hence, with this attitude of the asymmetric classification errors,  we need to assign different weights as the costs in learning the classifier.
	
	\gap
	
	In general, for a multiclass classification problem, let $ (X, Y )$ be jointly distributed random variables where
	$Y \in [ \, M \, ] \triangleq \{1, \ldots, M\}$
	denotes the label from $M$ classes and $X^m \triangleq \{ X\mid Y=m\}$ denotes the random vector of attributes in
	class $m$. The classifier makes
	its decision based on the scoring functions
	$\{ h(x,  {\mu}^m) \}_{m=1}^M$ where $ {\mu}^m\in \mathbb{R}^n$ are parameters.
	Given these parameters, we classify a vector $X$ into the  class $j$ if
	$
	j \, \in \,
	\operatornamewithlimits{\mbox{argmax}} \{
	\ h(X, {\mu}^m): {m \, \in \, [ \, M \, ]}\} .
	$
	Since there are $M$ classes, there are $M(M-1)$ types of misclassification errors.
	Suppose that these errors are separated into multiple groups such that each
	group of errors could result in the distinguished costs.
	Mathematically, let
	$
	T \, \triangleq \, \left\{ \, (i,j) \, \in \, [ \, M \, ] \times [ \, M \, ] \, \mid \,
	i \, \neq \, j \, \right\}
	$
	be all pairs of misclassified labels, i.e., a true label $i$ is misclassified as
	$j \neq i$. 
	Let $T = \displaystyle{
		\bigcup_{s=1}^S
	} \ T_s$ be a partition of $T$ into $S$ groups each of which is associated with a weight
	in the learning the classification.  The conditional probability that the classified label is $j$ given that the true label is $i$ with $j \neq i$ is given by
	$
	\mathbb{P} \left(h(X^i,  {\mu}^j) \, > \, \displaystyle\max_{m \in [M]} \,  h(X^i, {\mu}^m)
	\right).
	$
	To produce classifiers with acceptable margins, we are interested in the following  probability of  having the error $(i,j)\in T$   with the margin $\tau_{i,j} \geq 0$:
	\[
	\mathbb{P} \left(h(X^i,  {\mu}^j) \geq  \max_{m \in [M]} h(X^i,  {\mu}^m ) -
	\tau_{i,j} \right) \,= \, \mbox{POE}\left(h(X^i,  {\mu}^j) - \displaystyle{\max_{m \in [M]}}  \, h(X^i,  {\mu}^m ); \, -
	\tau_{i,j} \right).
	\]
	The above probability is  a discontinuous function of $\mu$, thus difficult to be minimized directly. Nevertheless, when we replace the POE with its buffered version,  then the cost-sensitive multiclass classification model yields a compound SP \eqref{eq:compound_composite_SP}. Specifically, with the weights $\{\alpha_s\}_{s \in [S]}$ for $S$ groups of errors and tolerance $\{\tau_{i,j}\}$, we consider the following buffered cost-sensitive multiclass classification model
	\[
	\displaystyle \operatornamewithlimits{minimize}_{ \{{\mu}^m  \, \in \,  \mathbb{R}^n\}_{m=1}^M} \;\; \displaystyle{
		\sum_{s=1}^{S}
	} \, \alpha_s \, \left\{ \, \max_{(i,j)\in T_s}  \mathrm{bPOE}\left(h(X^i, {\mu}^j)
	- \max_{m \in [M]}  h(X^i,  {\mu}^m ); -\tau_{i,j}\right) \right\}.
	\]
	
	Using the minimization formula of the bPOE that involves lifting such a formula by
	an additional scalar variable, interchanging the resulting minimization in the latter
	variable by the max over the pairs $(i,j)$, we can obtain a compound stochastic
	programming problem \eqref{eq:compound_composite_SP} of the above model
	wherein
	the function $F$ is identically equal to zero.  We omit the details; cf.\ the derivation
	at the end of the last subsection.

	\section{Numerical Experiments} \label{sec:numerics}

In this section, we present some numerical results of the SMM algorithm for the OCE-of-deviation optimization problem \eqref{eq:oce_deviation}
to demonstrate the effectiveness of SMM. We investigate such a compound SP with two particular utility functions \cite{ben2007old}, which are
respectively the exponential utility function
$u_{\rm exp}(t) = 1- e^{- t}$, and the piecewise linear utility function $u_{\rm pl}(t) = \gamma_1  \, \max\{t, 0\} - \gamma_2 \, \max\{-t, 0\}$ with
$ 0 \leq  \gamma_1 < 1 < \gamma_2$. In subsection \ref{subsec:exponential}, we address the   OCE-of-deviation optimization problem with the exponential utility function, which is a compound SP with smooth component functions, and thus enables the application of both the SMM algorithm and the nested stochastic approximation algorithms  \cite{GhadimiRuszczynskiWang20,WangFangLiu17,YangWangFang19}.  We compare the performance
of the SMM algorithm with the Nested Averaged Stochastic Approximation (NASA) algorithm in \cite{GhadimiRuszczynskiWang20}, which has
been shown to have faster convergence rate than other nested stochastic approximation methods \cite{WangFangLiu17,YangWangFang19} for smooth
nonconvex compound SP. In subsection \ref{subsec:piecewise}, we further address the piecewise linear utility-based OCE-of-deviation optimization
problem which requires the solution of a nonsmooth nonconvex compound SP.  We conducted all the computations in Python on
MacBookPro with macOS Catalina system  and 2.7 GHz Intel Core i5 8GB RAM. The subproblems in the SMM algorithm were solved by the {\sc Ipopt} solver
\url{https://coin-or.github.io/Ipopt/}; see also \cite{Wachter02}.
	
\subsection{OCE-of-deviation optimization: exponential utility}
    \label{subsec:exponential}

    From Example 2.1 in \cite{ben2007old}, we know that with the exponential utility function $u_{\rm exp} (t) =  1- e^{-t}$, the  OCE of  a random variable $Z$ can be represented as $S_{u_{\rm exp}}(Z) = - \log \, \mathbb{E} \,[ \, e^{-Z} \,]$. Then with the loss function $f(x, \xi) = (x - \xi)^2$,
	a  normal random variable $\tilde{\xi}$ with mean value $4$ and variance $0.5$, the OCE-of-deviation optimization
	problem restricted to the interval $[0,8]$ is equivalent to:

	\begin{align} \label{eq:oce_deviation_exp}
	\displaystyle{
		\operatornamewithlimits{\mbox{minimize}}_{x\in [0,8]}
	}  \,\,   \mathbb{E} \Big[ \, {\rm exp} \left\{ -(x-\tilde{\xi} \, )^2 + \mathbb{E}[ \,  (x-\tilde{\xi}\, )^2 \, ]\right\}
	\, \Big].
	\end{align}
With the sample average approximation (SAA) of the objective function in Figure~\ref{fig:sol_sample}(a), we can estimate that the global
optimum is around 4 according to the SAA convergence results in \cite{dentcheva2017statistical,ermoliev2013sample,hu2020sample}.  We set the sequence of
incremental sample sizes  $\Delta_{\nu} = \lfloor \nu^{\, \alpha} \rfloor +1 $ for some $\alpha >0$ in order to satisfy the requirement of $\{ N_{\nu} \}$
as stipulated in Lemma \ref{lm:condition on sample sizes}.  With the fixed proximal parameter $\rho = 10$ and random initialization on $[0,8]$, Figure~\ref{fig:sol_sample}(b) plots the results
from the SMM algorithm with 50 replications of 20 iterations each and
with $\alpha = 0.3, \,  0.4,$ and $0.5$ respectively.  We can observe that the sequence of solutions and the associated objective value approximations
from the SMM with the larger incremental sample rate are less volatile than the ones with smaller incremental sample rate.  Furthermore, by fixing
$\alpha =0.4$, we compare the performance of the SMM with multiple proximal parameters $\rho=10, 50, 100$ in Figure \ref{fig:sol_sample}(c).
The sequence with $\rho = 50$ converges faster with smaller volatility than the sequences with the other two parameter values.  Due to the large
variances of the starting points, the initial iterations are excluded in Figure \ref{fig:sol_sample}(b) and Figure \ref{fig:sol_sample}(c); otherwise, the information
of the rest iterations will be shadowed by the large volatility of the first iteration.  We also note that after 20 iterations, the difference
$\| x^{\nu} - x^{\nu+1} \|$ is of the order $10^{-2}$ which can be reduced to $10^{-3}$ after 5 more iterations.

    \begin{figure}[!ht]
	\subfigure[SAA of the objective value]{\includegraphics[width=0.33\textwidth]{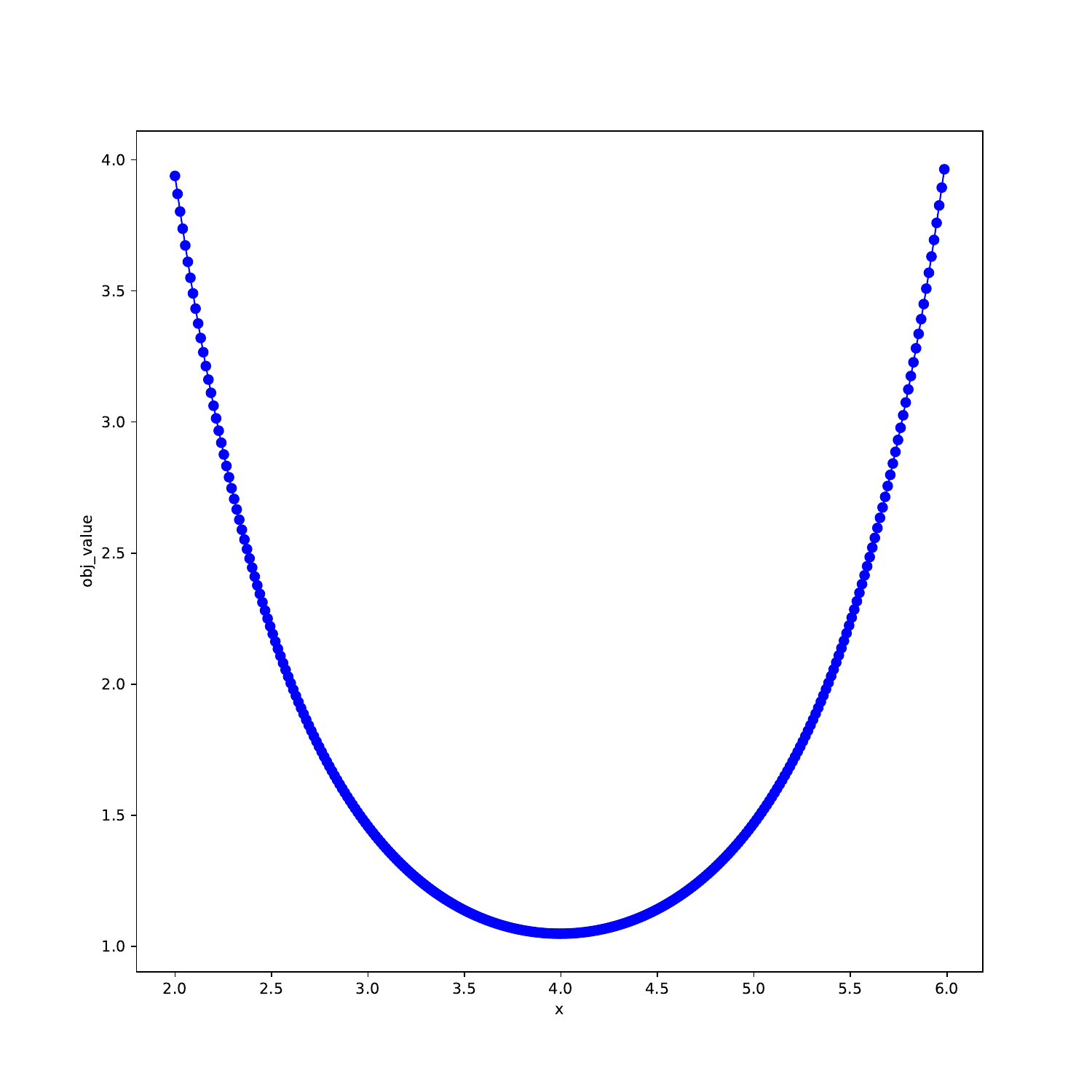}}
	\subfigure[    incremental sample size   $\Delta_{\nu} = \lfloor {\nu}^\alpha \rfloor + 1$ with $\alpha = 0.3, 0.4, 0.5$]{
	\includegraphics[width=0.33\textwidth]{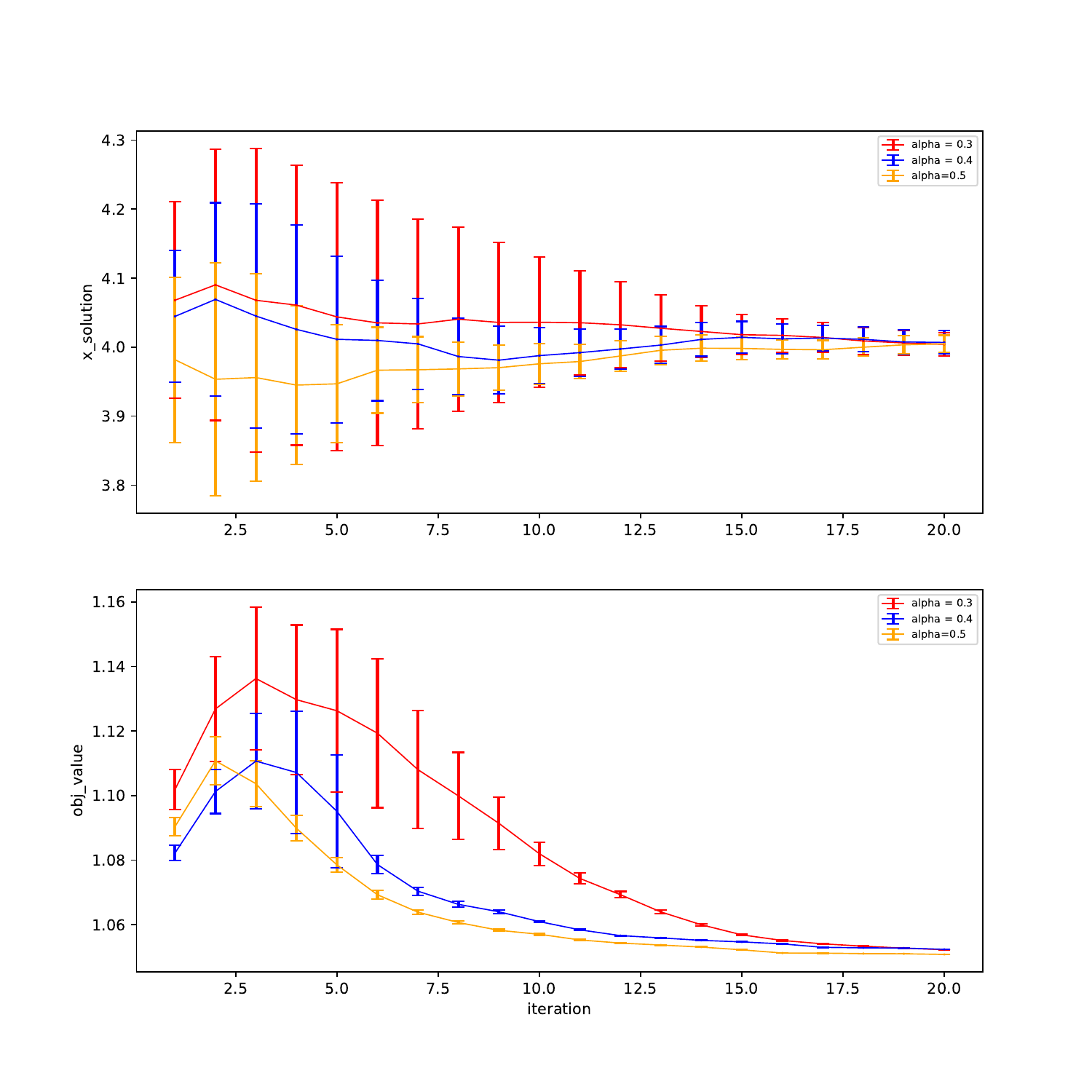}}
	\subfigure[  proximal parameter $\rho = 10, 50, 100$]{\includegraphics[width=0.34\textwidth]{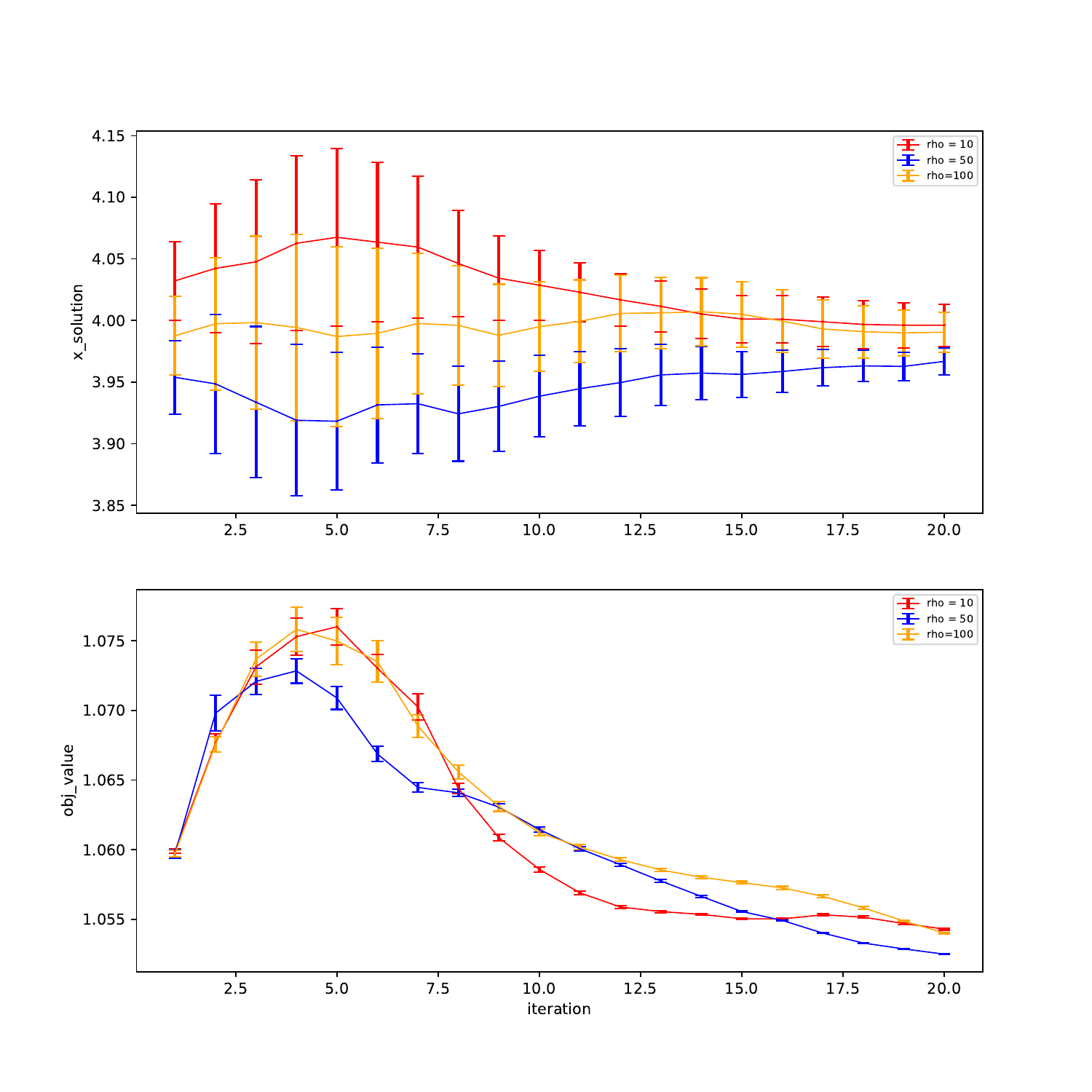}}
	\caption{\rm {In Figure \ref{fig:sol_sample} (b) and (c), the  plots at the top represent  the  solution sequence, and the plots at the bottom  represent the sequence of objective value approximation using the test data set of size $10^5$ excluding first iteration.}}
	\label{fig:sol_sample}
\end{figure}

\gap

We now compare the performance of the SMM algorithm with the NASA algorithm \cite[Algorithm~1]{GhadimiRuszczynskiWang20}.  At each iteration, the NASA algorithm generates only one
new sample  and acquires the corresponding stochastic gradient information to move to the next iterate.  With appropriate control of the parameter
sequences ($\{\tau_k\}$, $\{\beta_k\}$ and parameters $a$ and $b$ in its description in \cite{GhadimiRuszczynskiWang20})
the NASA algorithm  was shown to be asymptotically convergent
in \cite[Theorem 3]{GhadimiRuszczynskiWang20}.  With suitable and fair tuning of these parameters, we implement this algorithm for \eqref{eq:oce_deviation_exp}.
We can first observe from Figure~\ref{fig:nasa}, especially the plots at the bottom regarding the objective values, that the performance of the solution sequence
is highly related to the initial points.  It takes roughly 500 iterations starting at the initial point 0.8 to get the solution with the objective value around 1.12,
and roughly 300 iterations starting at the initial point 3.5 to obtain the solution with the similar objective value. However, when the initial point is 4.2 which
is expected to be very close to the global optimum, the number of iterations needed is less than 20.  Figures~\ref{fig:nasa_rep}(a) and (b) further show that
the solution sequences and objective value sequences from the NASA algorithm over 50 replications with random initialization respectively in $[0,8]$ and $[3,5]$
converge slower and with more volatility than the SMM algorithm.

\begin{figure}[!ht]
\subfigure[ \, initial point 0.8]{\includegraphics[width=0.33\textwidth]{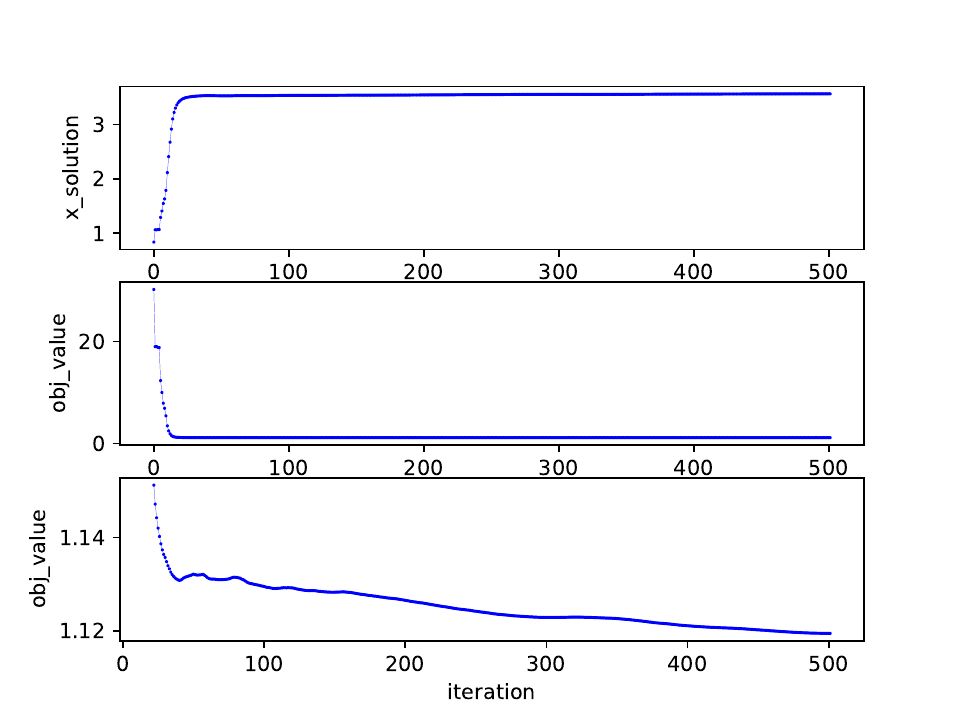}}
\subfigure[ \, initial point 3.5]{\includegraphics[width=0.33\textwidth]{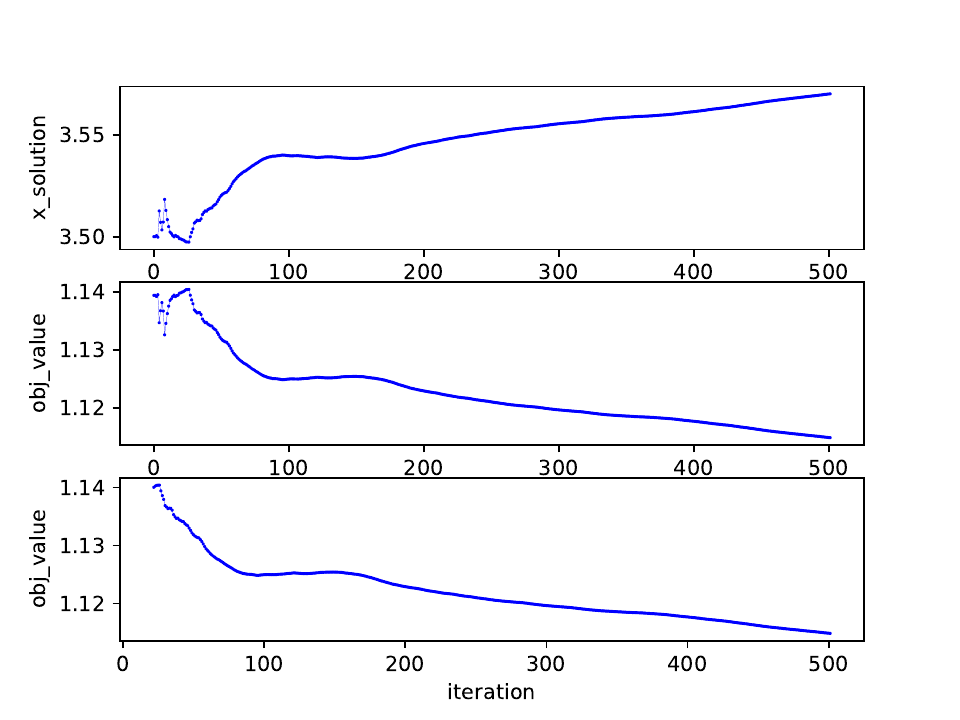}}
\subfigure[ \,  initial point 4.2]{
\includegraphics[width=0.33\textwidth]{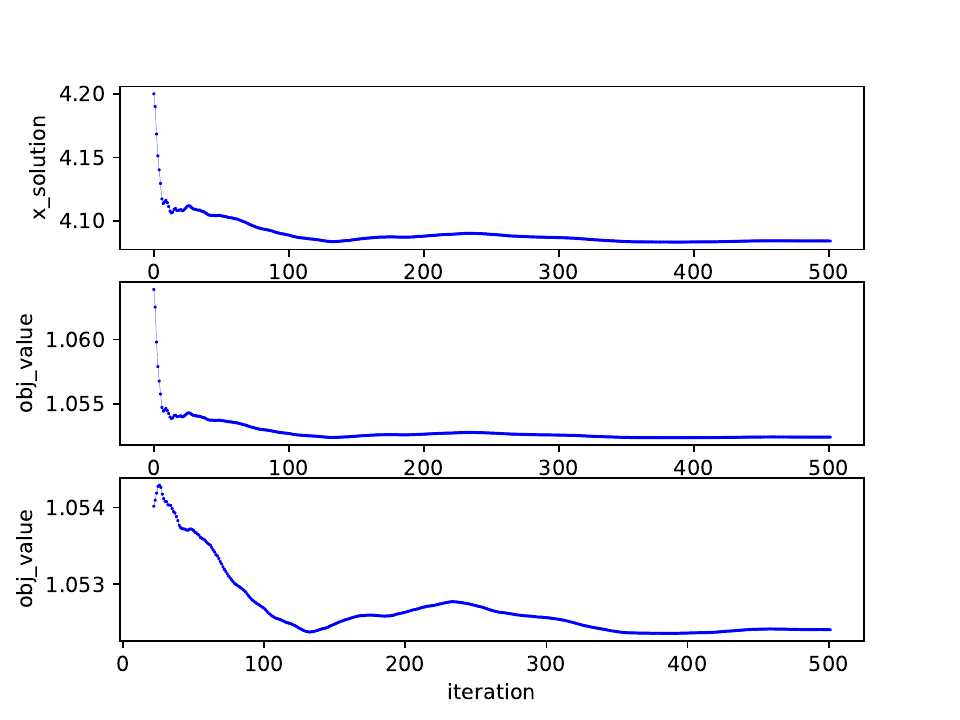}}
\caption{\rm {The top curve represents the  solution sequence, and the middle curve represents the sequence of objective value approximation using
the test data set of samples, and  the bottom curve represents the sequence of objective value approximation excluding the first 20 iterations.}}
\label{fig:nasa}
\end{figure}

\begin{figure}[!ht]
\begin{center}
\subfigure[]{
\includegraphics[width=2.5in]{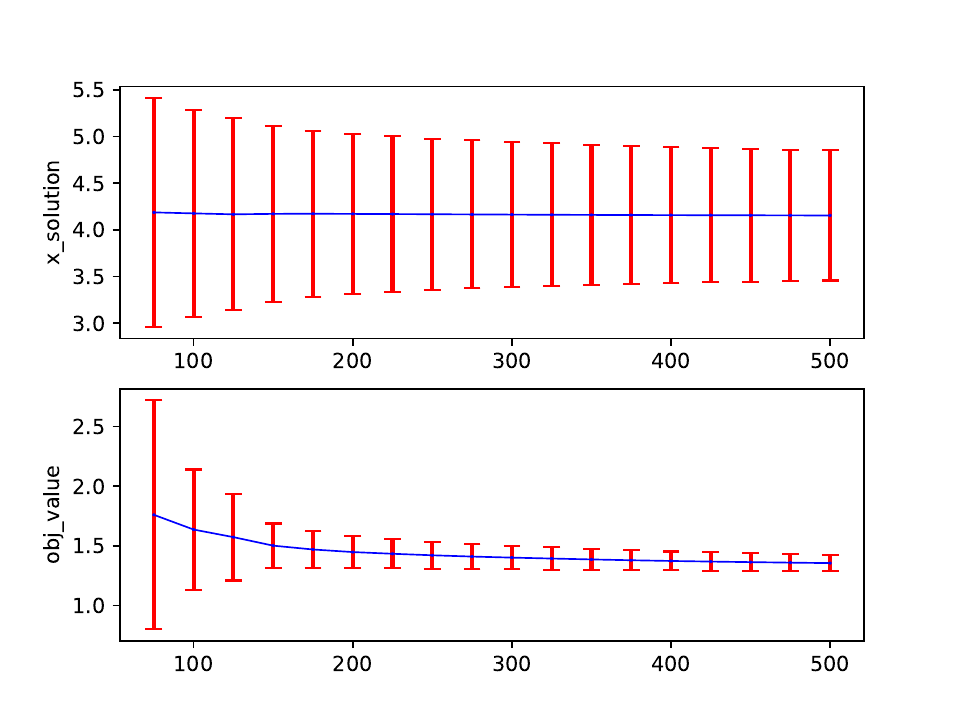}}
\subfigure[]{
\includegraphics[width=2.5in]{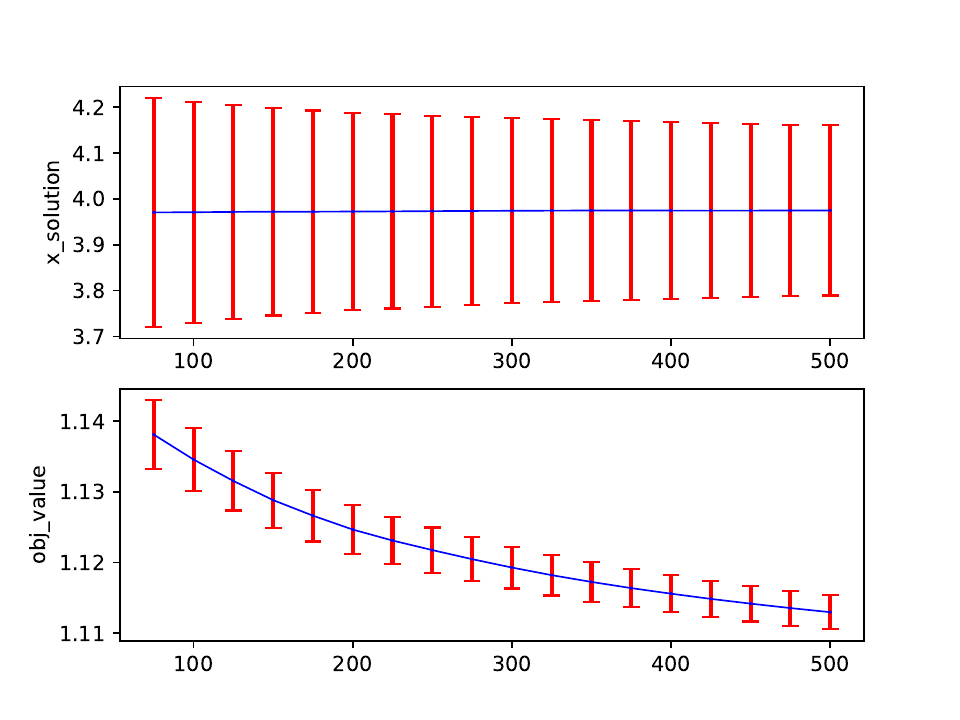}}
\end{center}
\caption{\rm {In Figure \ref{fig:nasa_rep}(a),  the initial points are randomly selected in $[0,8]$ and in Figure \ref{fig:nasa_rep}(b) the initial points are randomly selected in $[3,5]$  over 50 replications. The top figures represent  the  solution sequence, and the bottom figures represent the sequence of objective value approximation using the test data set of samples, both of which exclude the first 50 iterations.}}
\label{fig:nasa_rep}
\end{figure}

In the NASA algorithm, the number of samples is the same as the number of iterations.  To have a fair comparison of the SMM and NASA algorithms,
we compute the total utilized sample size of the SMM algorithm with $\alpha = 0.4$
and summarize the computational results in the following table, where $\{v_r\}_{r=1}^{50}$ denotes the SAA of the objective value at the obtained solutions for 50 replications.
The rows of the SMM algorithm imply that as the iteration number increases, both the mean and the standard deviation (std) of the objective values decrease with a relatively small number of samples, which are generally much smaller than the ones of the NASA algorithm with the same initialization range. 
By setting the initialization range of the NASA algorithm as $[3,5]$, both the mean and the std of the sequences  are comparable to the ones from the SMM, but a significantly larger number of samples and the prior knowledge of a good initialization range are required by the former algorithm.  
Hence, although the convergence of the SMM requires the sequence of sample sizes $\{N_{\nu}\}$ to satisfy the condition stipulated in Lemma~\ref{lm:condition on sample sizes},
the total number of utilized samples turns out to be quite small
in practice to achieve a reasonable accuracy.  Hence,
the SMM algorithm  significantly outperforms the NASA algorithm in terms of the sample complexity, convergence rate, and stability,
while sacrificing a longer computational time based on the column of running time.  Part of the reason for the latter may be because we employed an off-the-shelf solver ({\sc Ipopt}) to solve
each convex subproblem.  As the experiments are meant to provide initial support to the key ideas of the SMM algorithm, we have not attempted to improve the
computational efficiency of this step.
%

\gap

\begin{center}
\begin{tabularx}
{1\textwidth}{
  | >{\centering\arraybackslash}X
  | >{\centering\arraybackslash}X
  | >{\centering\arraybackslash}X  |>{\centering\arraybackslash}X
  | >{\centering\arraybackslash}X
  | >{\centering\arraybackslash}X
  | >{\centering\arraybackslash}X
  | >{\centering\arraybackslash}X
  | >{\centering\arraybackslash}X
  |}
\hline
 algorithm & range of initial points & iteration number & total sample size   & mean of $\{v_r\}_{r=1}^{50}$ & std of $\{v_r\}_{r=1}^{50}$ & average running time   \\
\hline
\hline
\multirow{4}{2.5em}{SMM} & \multirow{4}{2.5em}{$[\,0, \, 8\,]$}  & 5 & 16 &  1.0847 & 0.0932 &  0.3613\\
& & 10 & 31 &  1.0635 & $0.0210$  &  0.8009\\
&  & 15 & 54 &  1.0545 & $0.0083$  &  1.5682\\
 &   & 20 & 66 &  1.0511 & $0.0063$ & 2.1323  \\
\hline
\hline
\multirow{4}{2.5em}{NASA} & \multirow{3}{2.5em}{$[\,0, \, 8\,]$}  & 50 & 50 & 1.6355 &  0.8684   & 0.0012 \\
 &   & 200 & 200 & 1.3818 &  0.3248   & 0.0096 \\
 &   & 500 & 500 & 1.3131 &  0.2439  & 0.0120\\
 &   & 5000 & 5000 & 1.2771 &  $0.1140$  & 0.1410\\
 \hline
 \multirow{4}{2.5em}{NASA}& \multirow{4}{2.5em}{$[\,3, \, 5\,]$}   & 50 & 50 & 1.1522 & 0.0960  & 0.0013\\
   &   & 200 & 200 & 1.0997 & 0.0565  & 0.0057\\
     &   & 500 & 500 & 1.0960 & 0.0556  & 0.0157\\
  &   & 5000 & 5000 & 1.0890 & 0.0412  & 0.1142\\
\hline
\end{tabularx}
\end{center}
\gap

\subsection{OCE-of-deviation optimization: piecewise linear utility}
    \label{subsec:piecewise}
With the piecewise linear utility function $u_{\rm pl}(z) = 0.8 \, \max\{z, 0\} - 2 \, \max\{-z, 0\}$,  the loss function $f(x, \xi) = (x- \xi)^2$ and
the truncated normal random variable $\tilde{\xi}$ on $[1.0, 1.5]$, the OCE-of-deviation optimization problem is equivalent to:
{\small
    \[
    \underset{ x \in [0,8], t \in [-100, 100]}{\mbox{minimize}} \, \,  - t + \mathbb{E} \left[ \max \left\{ 0.8 \left(  - (x- \tilde{\xi}\,)^2 + \mathbb{E} \, [ (x- \tilde{\xi}\, )^2] + t \right) , 2 \left( - (x- \tilde{\xi}\, )^2 + \mathbb{E} \, [ (x- \tilde{\xi}\, )^2]+t \right)  \right\}  \right].
    \]
}

\noindent  The performance of the SMM algorithm with 30 replications is given in Figure~\ref{fig:piece}. Since the corresponding compound SP is a nonconvex problem, the SMM algorithm might converge to different points with different initial points. Thus the variance of the iterates of $x$ is consistently large during the 30 iterations. Given a fixed initial point, we observe that the difference of consecutive iterates is of the order $10^{-2}$ after 20 iterations.

    \begin{figure}[!ht]
    {\begin{center}
	\includegraphics[width=0.5\textwidth]{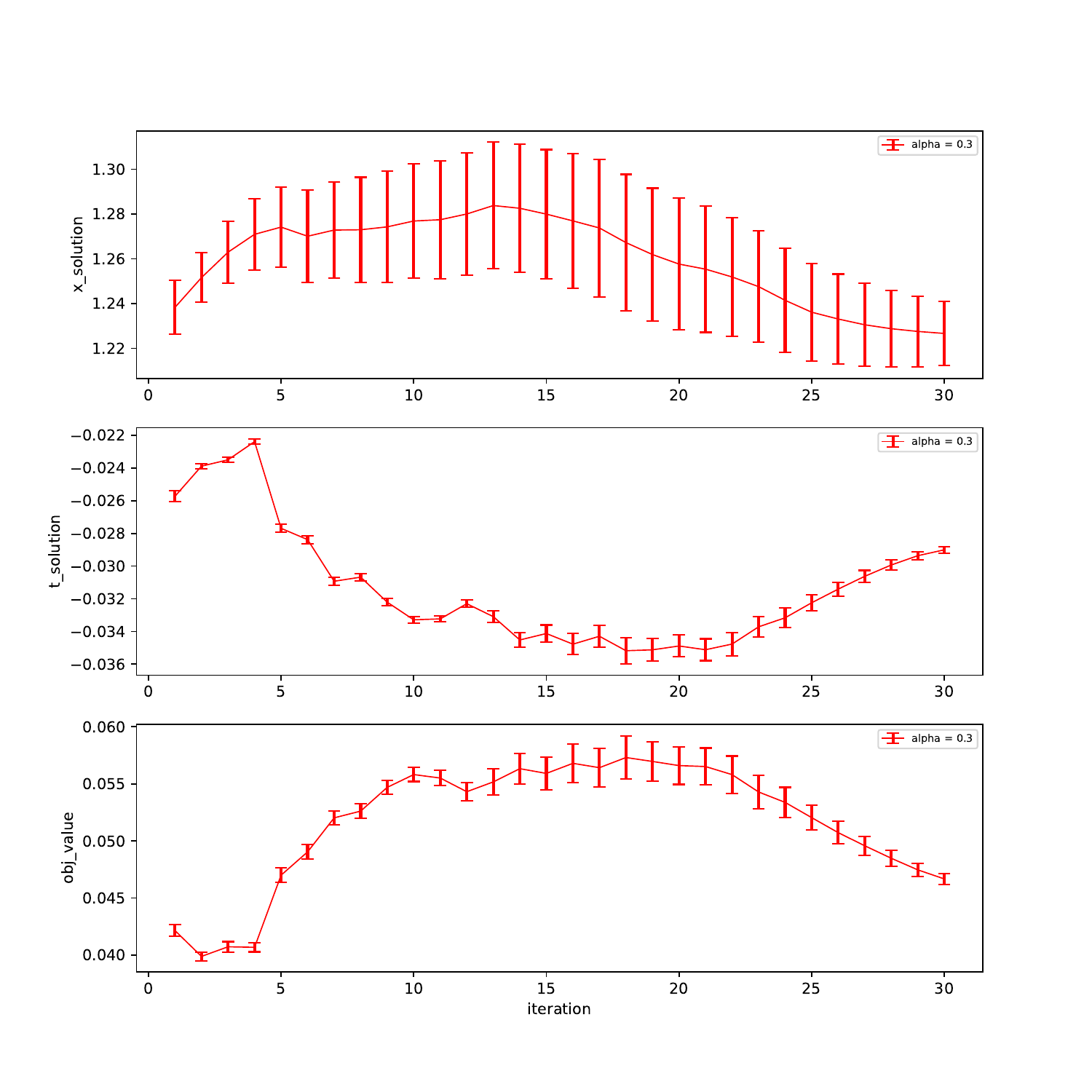}\end{center}}
	\caption{\rm {The top and middle curves represent the solution sequence of $x$ and $t$ respectively, and the bottom curve represents the sequence of objective value approximation using the test data set of samples, excluding the initial iteration.}}
	\label{fig:piece}
\end{figure}

\section*{Acknowledgments.}
Their work is supported by the U.S.\ National Science Foundation under grant
IIS-1632971 and U.S.\ Air Force Office of Sponsored Research under grant
FA9550-18-1-0382.  The authors are grateful to two reviewers for their
constructive comments that have improved the paper.

	\bibliographystyle{plain}
	\bibliography{references}

 \section{APPENDIX}
 	 
\gap

\textit{Proof of Proposition \ref{prop:SAA_error_bound}.}  (a) By the Lipschitz continuity of $\varphi$ and $\psi$, and Theorem 7.48 in \cite{shapiro2009lectures}, we can derive that $\overline \Theta_{n,m}(x)$ converges to $\Theta(x)$ w.p.1 uniformly on $X$.

\gap

(b) With the uniform law of large numbers of $\overline \Theta_{n,m}(\bullet)$ on $X$ and compactness of $X$, we can derive that with probability 1, there exist a positive integer $\bar{n}$ and a positive value $U$, such that for any $n,m \geq \bar{n}$, we have  $\displaystyle\sup_{x\in X} \; \left| { \overline \Theta_{n,m}(x)}\right| < U$.

\gap

(c)
Consider any $x\in X$.
By the Lipschitz continuity of $\psi$, we have
\[
\begin{array}{ll}
	& \left( \, \mathbb{E}  \,  \left|{ \, \overline{\Theta}_{n,m}(x) - \Theta(x)  \, } \right|\,   \right)^{\,2} \leq  \, \mathbb{E}
	\, \left|{ \, \overline{\Theta}_{n,m}(x) - \Theta(x)  \, } \right| ^{\,2}   \\[0.2in]
	& \leq (\mbox{Lip}_{\psi})^{\, 2} \cdot  \displaystyle{\sum_{j=1}^{\ell_\varphi}}  \, \,  \mathbb{E} \,   \Bigg( \,  \underbrace{\displaystyle{
			\frac{1}{n}
		} \, \displaystyle{
			\sum_{t=1}^{n}
		} \, \varphi_j\Bigg( G(x,\xi^{\, t} \, ), \displaystyle{
			\frac{1}{m}
		} \, \displaystyle{
			\sum_{s=1}^{m}
		} \, F(x, \eta^{\, s} ) \Bigg)}_{\mbox{denoted by  $ \bar{\Psi}^{\,\, j}_{n,m}(x) $ }}- \underbrace{\mathbb{E} \, \Bigg[ \varphi_j  \Big(G(x, \tilde{\xi}), \mathbb{E} \,  [ F(x, \tilde{\xi})\,  ]  \Big)  \Bigg]}_{\mbox{denoted by  $  {\Psi}^{\,j}(x) $ }}  \, \Bigg) ^{\,2}.
\end{array}
\]
Since $\mathbb{E} \,\Big[ \, (\bar{\Psi}^{\,j}_{n,m}(x) - \Psi^{\,j}(x))^2\, \Big] =   \left(\Psi^{\,j}(x) - \mathbb{E} \, [ \, \bar{\Psi}^{\,j}_{n,m}(x) \,  ]  \right)^2 +
\mathbb{E} \,\Big[ \, \left(\bar{\Psi}^{\,j}_{n,m}(x) -\mathbb{E} \, [ \, \bar{\Psi}^{\,j}_{n,m}(x)\, ] \right)^2\, \Big]   $, we analyze these two terms separately.
First, for any $ j\in [\ell_{\varphi}]$,
\[
\begin{array}{ll}
	\Big( \, \Psi^{\,j}(x) - \mathbb{E} \, [ \, \bar{\Psi}^{\,j}_{n,m}(x) \,  ]  \Big)^2   & =    \Bigg( \, \Psi^{\,j}(x) -
	\mathbb{E}_{\tilde{\xi}, \{\eta^s\}} \Bigg[ \,   \varphi_j \Bigg( G(x,\tilde{\xi} \, ), \displaystyle{
		\frac{1}{m}
	} \, \displaystyle{
		\sum_{s=1}^{m}
	} \, F(x, \eta^{\, s} ) \Bigg) \Bigg]  \Bigg)^2   \\[0.25in]
	& \leq {(\mbox{Lip}_{\varphi})^{\, 2}  \cdot   \,\mathbb{E}_{\{\eta^s\}} \Bigg[ \, \Big\| \,  \mathbb{E} \, [ \, F(x, \tilde{\xi} \, ) \,] -  \displaystyle{
			\frac{1}{m}
		} \, \displaystyle{
			\sum_{s=1}^{m}
		} \, F(x, \eta^{\, s} ) \,\Big\|^2 \,  \Bigg] } \, \leq \, \displaystyle{\frac{(\mbox{Lip}_{\varphi})^{\,2} \, \sigma_F^{\,2}}{ {m}}}.
\end{array}
\]
For the second term  $\Psi^j$, we have
\[
\begin{array}{ll}
	& \mathbb{E} \,\Big[ \, \left(\bar{\Psi}^{\,j}_{n,m}(x) -\mathbb{E} \, [ \, \bar{\Psi}^{\,j}_{n,m}(x)\, ] \right)^2\, \Big]  \\[0.15in]
	& =  \displaystyle{\frac{1}{n}} \, \mathbb{E}_{\tilde{\xi}^{\,1}, \{\eta^{\,1,s}\}} \, \Bigg[ \Bigg(\,\varphi_j \Big(G(x, \tilde{\xi}^{\,1}), \displaystyle{\frac{1}{m}} \sum_{s=1}^m\, F(x, \eta^{\,1,s}) \Big) -\mathbb{E}_{\tilde{\xi}^{\, 2}, \{\eta^{\, 2,s}\}} \, \Big[ \,\varphi_j \Big(G(x, \tilde{\xi}^{\,2}), \displaystyle{\frac{1}{m}} \sum_{s=1}^m\, F(x, \eta^{\,2, s }) \Big)  \Big] \Bigg)^2 \,  \, \Bigg]\\[0.2in]
	& = \displaystyle{\frac{1}{2\,n}} \, \mathbb{E}_{\big\{\tilde{\xi}^{\,i}, \{\eta^{\,i,s}\}: \,  i=1,2\big\}} \, \Bigg[ \Bigg(\,\varphi_j \Big(G(x, \tilde{\xi}^{\,1}), \displaystyle{\frac{1}{m}} \sum_{s=1}^m\, F(x, \eta^{\,1,s}) \Big) -  \,\varphi_j \Big(G(x, \tilde{\xi}^{\,2}), \displaystyle{\frac{1}{m}} \sum_{s=1}^m\, F(x, \eta^{\,2, s }) \Big)   \Bigg)^2 \,  \, \Bigg]\\[0.2in]
	& \leq \displaystyle{\frac{(\mbox{Lip}_\varphi)^{\,2}}{2\,n}}  \, \mathbb{E}_{\big\{\tilde{\xi}^{\,i}, \{\eta^{\,i,s}\}: \, i=1,2\big\}}  \Bigg[ \,\Big\| \,  G(x, \tilde{\xi}^{\,1})- G(x, \tilde{\xi}^{\,2}) \, \Big\| ^{\,2}+ \Big\| \,  \displaystyle{\frac{1}{m}} \sum_{s=1}^m\, F(x, \eta^{\,1, s })  - \displaystyle{\frac{1}{m}} \sum_{s=1}^m\, F(x, \eta^{\,2, s })  \, \Big\| ^{\,2} \,\Bigg]\\[0.2in]
	& \leq  \displaystyle{\frac{( \sigma_G^{\,2} +  \sigma_F^{\,2}/m) \, (\mbox{Lip}_\varphi)^{\,2}  }{n}}.
\end{array}
\]
Combining the above two bounds, we derive that
\[
\mathbb{E}  \,  \left|{ \, \overline{\Theta}_{n,m}(x) - \Theta(x)  \, }\right|  \leq  \mbox{Lip}_{\psi} \,  \mbox{Lip}_{\varphi} \cdot  \sqrt{\ell_{\varphi}} \, \Bigg(  \frac{\sigma_F^{\,2}   }{m} + \frac{  \sigma_F^{\,2} \,  }{m\, n}  +  \displaystyle{\frac{ \sigma_G^{\,2}  \,  }{n}} \Bigg)^{1/2}  \leq \mbox{Lip}_{\psi} \,  \mbox{Lip}_\varphi  \,  \sqrt{\ell_{\varphi}} \,   \frac{\sqrt{2  \sigma_F^{\,2} +  \sigma_G^{\,2} } \,  }{ {\min\{m, n\}}^{1/2}},
\]
which completes the proof of this lemma.
 
\hfill $\square$

\gap

The following lemma provides a sufficient condition on the sequence of sample sizes for the convergence of the SMM Algorithm.
\gap

\gap 
 
\textit{Proof of Lemma \ref{lm:condition on sample sizes}.}
Since  $N_{\nu} \leq \nu \cdot  N_{\nu-1}$ for all $\nu > \bar{\nu}$, we have
$N_{\nu} - N_{\nu-1} \leq  c_3 \cdot \displaystyle{\frac{N_{\nu}}{\nu}} \leq c_3 \cdot N_{\nu-1} $ for all such $\nu$.  Then we have,
\[
\displaystyle{\Big(1 - \Big( \frac{ N_{\nu-1}}{N_{\nu}} \Big)^{\,2} \,\Big)
} \, \displaystyle{
	\frac{1}{N_{\nu-1}^{\, 1/2}}} \, = \, \frac{N_{\nu} - N_{\nu-1}}{N_{\nu}} \cdot  \frac{N_{\nu} + N_{\nu-1}}{N_{\nu}} \cdot \frac{1}{N_{\nu-1}^{\,1/2}}
\, \leq \, 2 \, c_3^{\,1/2} \,
\frac{N_{\nu}- N_{\nu-1}}{N_{\nu}} \cdot \frac{1}{(N_{\nu} - N_{\nu-1})^{\,1/2}}.
\]
Hence, we only need to show $\displaystyle{
	\sum_{\nu=1}^{\infty}
} \, \displaystyle{
	\frac{N_{\nu} - N_{\nu-1}}{N_{\nu}}
} \, \displaystyle{
	\frac{1}{( \, N_{\nu} - N_{\nu-1} \, )^{\, 1/2}}
} \, < \, \infty.$ In fact,
\[
\begin{array}{lll}
	\displaystyle{
		\frac{N_{\nu} - N_{\nu-1}}{N_{\nu}}
	} \, \displaystyle{
		\frac{1}{(   N_{\nu} - N_{\nu-1}   )^{ \, 1/2}}
	} =  \left( \displaystyle{
		\frac{N_{\nu}-N_{\nu-1}}{N_{\nu}}
	} \right)^{1/2}   \displaystyle{
		\frac{1}{N_{\nu} ^{ \,  1/2}}}
	\leq  \left( \displaystyle{
		\frac{c_3 }{\nu}
	} \right)^{1/2} \displaystyle{
		\frac{1}{ (c_2 \cdot \nu^{1+2c_1  })^{1/2} }}  =   \displaystyle{
		\frac{c_3}{(c_2\cdot c_3)^{  1/2}}
	} \, \displaystyle{
		\frac{1}{\nu^{1+ c_1}}
	},
\end{array}
\]
which establishes the finiteness of the second series.
 \hfill $\square$

\gap

 \gap

\textit{Proof of Lemma \ref{lm:stationarity_distance}.}
For any $\wh{x} \in \mbox{FIX}({\cal M}_{\wt{V}_{\rho}})$, from (SR2), we have
$\wh{x} \in S_{X, \theta}^{\, C}$. Thus for such $\wh{x}$, choosing $\bar{x}=\wh{x}$,
the lemma holds.  It remains to consider the case when
$\wh{x} \in X \setminus \mbox{FIX}({\cal M}_{\wh{V}_{\rho}})$.  Let
\[
\zeta_{\rho}(x;\wh{x}) \, \triangleq \, \theta(x) + \left( \, \displaystyle{
	\frac{1}{2\rho}
} + \displaystyle{
	\frac{\kappa}{2}
} \, \right) \norm{x-\wh{x}}^2.
\]
By the assumption (SR1),
particularly the key inequality (\ref{eq:the other inequality}), we have
\[
\wt{V}_{\rho}(x;\wh{x}) + \displaystyle{
	\frac{\kappa}{2}
} \, \norm{x-\wh{x}}^2 \, \geq \, \zeta_{\rho}(x;\wh{x} ) \, \geq \,
\wt{V}_{\rho}(x;\wh{x} ).
\]
Consequently, $\zeta_* \triangleq \underset{x\in X}{\mbox{minimize}} \,
\zeta_{\rho}(x;\wh{x})
\geq \wt{V}_{\rho}({\cal M}_{\wt{V}_{\rho}}(\wh{x});\wh{x})$.  Hence
\[
\zeta_{\rho}\left({\cal M}_{\wt{V}_{\rho}}(\wh{x});\wh{x}\right) - \zeta_* \, \leq \,
\zeta_{\rho}\left({\cal M}_{\wt{V}_{\rho}}(\wh{x});\wh{x}\right) -
\wh{V}_{\rho}\left({\cal M}_{\wt{V}_{\rho}}(\wh{x});\wh{x}\right) \, \leq \, \displaystyle{
	\frac{\kappa}{2}
} \, \norm{{\cal M}_{\wt{V}_{\rho}}(\wh{x}) - \wh{x}}^2.
\]
By Ekeland's variational principle,  with $\varepsilon = \displaystyle{
	\frac{\kappa}{2}
} \, \norm{ \, {\cal M}_{\wt{V}_{\rho} }(\wh{x}) - \wh{x} \, }^2 > 0$ and
$\lambda =\displaystyle \frac{\kappa}{2}\norm{ \, {\cal M}_{\wt{V}_{\rho}}(\wh{x}) - \wh{x}\, } > 0$,
we deduce the existence of $\bar{x} \in X$ such that
\begin{equation} \label{eq:eke_val}
	\zeta_{\rho}(\bar{x};\wh{x} ) \, \leq \,
	\zeta_{\rho}({\cal M}_{\wt{V}_{\rho}}(\wh{x}); \wh{x} ),
	\qquad
	\norm{ \, \bar{x} -  {\cal M}_{\wt{V}_{\rho}}(\wh{x}) \, } \, \leq \,
	\norm{ \, \wh{x} - {\cal M}_{\wt{V}_{\rho}}(\wh{x})\, }
\end{equation}	
and $\bar{x} \, = \, \displaystyle{
	\operatornamewithlimits{\mbox{argmin}}_{x \in X}
} \, \left\{\, \zeta_{\rho}(x; \wh{x}) + \lambda \, \norm{x-\bar{x}} \, \right\}$.
By the optimality condition of the latter problem, we have
$0 \in \partial_C \, \theta(\bar{x}) + \left( \displaystyle{
	\frac{1}{\rho}
} \, + \kappa \right) (\bar{x}-\hat{x}) + \lambda \, \mathbb{B} +
\mathcal{N}(\bar{x};X)$. This is the place where
one can choose other suitable subdifferential with which the first-order optimality condition holds.  From \eqref{eq:eke_val}, we have
\[
\norm{\bar{x} - \wh{x}} \, \leq \, \norm{\bar{x} -  {\cal M}_{\wt{V}_{\rho}}(\wh{x})}
+ \norm{\wh{x} - {\cal M}_{\wt{V}_{\rho}}(\wh{x})} \, \leq \,
2 \, \norm{\wh{x} - {\cal M}_{\wt{V}_{\rho}}(\wh{x})}.
\]
Moreover, we deduce
\[
\mbox{dist}\left(0; \partial_C \, \theta(\bar{x}) + \mathcal{N}(\bar{x}; X) \right)
\, \leq \, \left( \displaystyle{
	\frac{1}{\rho}
} + \kappa \, \right) \, \norm{\bar{x}-\wh{x}} + \lambda \, \leq \, \left( \displaystyle{
	\frac{2}{\rho}
} + \displaystyle{
	\frac{5 \kappa}{2}
} \, \right) \norm{\wh{x} - {\cal M}_{\wt{V}_{\rho}}(\wh{x})},
\]
obtaining the two desired bounds. 
 \hfill $\square$

\end{document}